\let\pa\partial
\let\na\nabla
\let\eps\varepsilon
\newcommand{\N}{{\mathbb N}}
\newcommand{\R}{{\mathbb R}}
\newcommand{\diver}{\operatorname{div}}
\newcommand{\V}{H^1_D(\Omega)}
\newcommand{\nD}{\overline{n}}
\newcommand{\pD}{\overline{p}}
\newcommand{\VD}{\overline{V}}
\newcommand{\DD}{\overline{D}}
\newtheorem{theorem}{Theorem}
\newtheorem{lemma}[theorem]{Lemma}
\begin{document}

\title[Drift-diffusion models for memristors]{Three-species drift-diffusion models \\
for memristors} 

\author[C. Jourdana]{Cl\'ement Jourdana}
\address{Laboratoire Jean Kuntzmann, 700 avenue centrale, 38400 Saint Martin 
d'H\`eres, France}
\email{clement.jourdana@univ-grenoble-alpes.fr} 

\author[A. J\"ungel]{Ansgar J\"ungel}
\address{Institute for Analysis and Scientific Computing, Vienna University of  
	Technology, Wiedner Hauptstra\ss e 8--10, 1040 Wien, Austria}
\email{juengel@tuwien.ac.at} 

\author[N. Zamponi]{Nicola Zamponi}
\address{Institute for Analysis and Scientific Computing, Vienna University of  
	Technology, Wiedner Hauptstra\ss e 8--10, 1040 Wien, Austria}
\email{nicola.zamponi@tuwien.ac.at}

\date{\today}

\thanks{The second and third authors acknowledge partial support from   
the Austrian Science Fund (FWF), grants P33010, W1245, and F65.
This work has received funding from the European 
Research Council (ERC) under the European Union's Horizon 2020 research and 
innovation programme, ERC Advanced Grant no.~101018153.} 

\begin{abstract}
A system of drift-diffusion equations for the electron, hole, and 
oxygene vacancy densities in a semiconductor,
coupled to the Poisson equation for the electric potential, is analyzed in a bounded
domain with mixed Dirichlet--Neumann boundary conditions. This system describes the
dynamics of charge carriers in a memristor device. 
Memristors can be seen as nonlinear resistors
with memory, mimicking the conductance response of biological synapses. 
In the fast-relaxation limit, the system reduces to a drift-diffusion system
for the oxygene vacancy density and electric potential, which is often used
in neuromorphic applications. The following results are proved:
the global existence of weak solutions to the full system in any
space dimension; the uniform-in-time boundedness of the solutions to the full system
and the fast-relaxation limit in two space dimensions; the global existence and weak-strong uniqueness analysis of the reduced system.
Numerical experiments in one space dimension illustrate the behavior of the
solutions and reproduce hysteresis effects in the current-voltage characteristics.
\end{abstract}

\keywords{Drift-diffusion equations, existence analysis, bounded weak solutions,
weak-strong uniqueness, singular limit, semiconductors, memristors,
neuromorphic computing.}  
 
\subjclass[2000]{35B25,35K51,35Q81.}

\maketitle


\section{Introduction}

The evolution of the microelectronics industry was influenced for more than 
50 years by Moore's law that predicts a doubling of the number of transistors on 
a microchip about every two years. As this observation is going to cease to 
apply because of physical scaling limitations, novel technologies or computing 
approaches are needed. Neuromorphic computing seems to be a promising avenue. 
It is a concept developed by C.~Mead in the late 1980s to implement aspects of 
(biological) neuronal networks as analog or digital copies on electric circuits. 

A promising device as technology enabler of neuromorphic computing is the memristor, 
which was postulated in \cite{Chu71}. We understand a memristor as a nonlinear 
resistor with memory showing a resistive switching behavior. For a historical debate 
of the memristor definition, we refer to \cite{VoMe15}. Artificial neurons and 
synapses can be built by using, e.g., ferroelectric materials, phase-change 
materials, or memristive materials \cite{IeAm20}. The oxide-based memristor consists 
of a thin titanium dioxide film between two metal electrodes \cite{Mla19}. The oxygen 
vacancies act as charge carriers. When an electric field is applied, the oxygen 
vacancies drift and change the boundary between the low- and high-resistance layers. 
In this way, memristors are able to mimic the conductance response of synapses. 
Advantages of these devices are the low power consumption, 
short switching time, and nano-size, allowing for high-density circuit architectures.

Memristor devices can be described by compact models, 
relating the charge and flux and using the memristor Ohm law \cite{Mla19}. 
In this paper, we are interested
in the internal physical processes of an oxide-based memristor, 
and we focus on diffusive models like those in \cite{GSTD13,SBW09}. 
They consist of drift-diffusion equations for the electron, hole, and oxygen
vacancy densities and the Poisson equation for the electric potential.

Since the electron-lattice relaxation is much faster than the oxygen vacancy drift, 
it is sufficient to determine the electron and hole densities from
the stationary equations, while the oxygen vacancy density still satisfies the
transient equation. In this paper, we make this limit rigorous.
More precisely, we prove the global existence of weak solutions to the
full transient model in any space dimension
and the fast-relaxation limit in two space dimensions.
Furthermore, we analyze the limiting model (existence, weak-strong uniqueness)
and present some finite-volume simulations in one space dimension.
Up to our knowledge, this is the first mathematical analysis of a charge transport
model for memristors.

\subsection{Model equations and mathematical difficulties}

The scaled equations for the electron density $n$, hole density $p$, 
oxygen vacancy density $D$ (or charged mobile $n$-type dopant density), 
and electric potential $V$ are given by
\begin{align}
  & \eps\pa_t n = \diver J_n, \quad J_n = \na n - n\na V, \label{1.n} \\
  & \eps\pa_t p = -\diver J_p, \quad J_p = -(\na p + p\na V), \label{1.p} \\
  & \pa_t D = -\diver J_D, \quad J_D = -(\na D + D\na V), \label{1.Nd} \\
  & \lambda^2\Delta V = n - p - D + A(x)\quad\mbox{in }\Omega,\ t>0, \label{1.V}
\end{align}
where $\eps>0$ is a small parameter describing the speed of relaxation to
the steady state, $\lambda>0$ is the (scaled) Debye length, 
$J_n$, $J_p$, and $J_D$ are the current densities of the electrons, holes, and
oxygen vacancies, respectively,
and $A(x)$ is the given immobile $p$-type dopant (acceptor) density. 
Following \cite{SBW09}, we neglect recombination-generation terms.
We use initial and physically motivated mixed Dirichlet--Neumann boundary conditions:
\begin{align}
  n(\cdot,0) = n^I, \quad p(\cdot,0) = p^I, \quad D(\cdot,0) = D^I
	&\quad\mbox{in }\Omega, \label{1.ic} \\
	n = \nD , \quad p = \pD , \quad V = \VD  
	&\quad\mbox{on }\Gamma_D,\ t>0,
	\label{1.dbc} \\
	J_n\cdot\nu = J_p\cdot\nu = \na V\cdot\nu = 0 &\quad\mbox{on }\Gamma_N,\ t>0,
	\label{1.nbc} \\
	J_D\cdot\nu = 0 &\quad\mbox{on }\pa\Omega,\ t>0. \label{1.nbcd}
\end{align}
This means that we prescribe the electron and hole densities as well as the
applied voltage on the Ohmic contacts $\Gamma_D$, while $\Gamma_N$
models the union of insulating boundary segments. 
The boundary is assumed to be not transparent to the oxygen vacancies, so
we assume no-flux conditions for $D$. This gives one of the mathematical
difficulties of the model, since we cannot perform certain partial integrations
as for $n$ and $p$. 

Another difficulty comes from the fact that we consider three species instead
two. Indeed, the quadratic drift terms can be estimated in the two-species system
for $(n,p,V)$
by exploiting a monotonicity property. Assuming for simplicity that $\nD=\pD=0$,
using $n$ and $p$ as test functions in the
weak formulations of \eqref{1.n} and \eqref{1.p}, respectively, 
and adding both equations, we find from \eqref{1.V} that
\begin{align}\label{1.est}
  \frac{\eps}{2}\frac{d}{dt}
	&\int_\Omega(n^2+p^2)dx + \int_\Omega(|\na n|^2+|\na p|^2)dx
  = \int_\Omega n\na V\cdot\na n dx - \int_\Omega p\na V\cdot\na p dx \\
	&= \frac12\int_\Omega\na(n^2-p^2)\cdot\na V dx
	= -\frac{1}{2\lambda^2}\int_\Omega(n^2-p^2)(n-p-D+A)dx \nonumber \\
	&\le -C(\lambda,D,A)\int_\Omega(n^2+p^2)dx, \nonumber
\end{align}
since $(n^2-p^2)(n-p)\ge 0$ and $D$ is fixed in the two-species model. 
This computation reduces the cubic term to
a quadratic one, which can be treated by Gronwall's lemma. This idea cannot
be applied to the three-species model.

\subsection{State of the art and strategy of our proofs}

These difficulties explain why there are only few analytical results in the 
literature on $n$-species drift-diffusion equations with $n>2$. 
They have been derived in \cite{WLL15} from a kinetic
Vlasov--Poisson--Fokker--Planck system in the diffusion limit.
In \cite{BBCFHT12}, a three-species system similar to ours is considered,
in the context of corrosion models, but only a stability analysis of a
finite-volume scheme has been performed. The authors of \cite{VPSS18}
analyze a four-species system, but their model includes drift terms only
in the equations for the electrons and holes, which enables the authors to use the
monotonicity property explained above. General existence results for an $n$-species
model have been proved in \cite{HPR19} for an abstract drift operator
imposing suitable smoothing conditions.
Estimates in Lebesgue and H\"older spaces for $n$-species systems 
have been derived in \cite{ChLu95} without an existence analysis.
More general models involving positive semidefinite, nondiagonal 
mobility matrices can be found in, e.g., \cite{DDGG20}.
A global existence analysis for $n$-species models was performed in
\cite{BFS14,GlHu97,GlHu05} (and the large-time asymptotics in \cite{GGH16})
assuming at most two space dimensions. This restriction can be understood as follows.

Instead of integrating by parts as in \eqref{1.est}, the idea is to use
an elliptic estimate for $V$. Because of the mixed boundary conditions, we cannot
expect full elliptic regularity for the Poisson equation, but
there exists $r_0>2$ such that \cite{Gro94}
\begin{equation}\label{1.regulV}
  \|\na V\|_{L^{r_0}(\Omega)}\le C\big(1+\|n-p-D+A\|_{L^{2r_0/(r_0+2)}(\Omega)}\big);
\end{equation}
see Lemma \ref{lem.vinfty} in the Appendix for the precise statement. 
Using $\log n-\log\nD$ as a test function in the weak formulation of \eqref{1.n},
we can derive a uniform estimate for $n\log n$ in $L^\infty(0,T;L^1(\Omega))$;
see \eqref{1.dHdt} below.
Then the H\"older inequality with $r_0'=2r_0/(r_0-2)$ and a generalized 
Gagliardo--Nirenberg inequality (see Lemma \ref{lem.GN} below) lead to 
\begin{align*}
  \int_\Omega & n\na V\cdot\na n dx \le \|n\|_{L^{r_0'}(\Omega)}
	\|\na V\|_{L^{r_0}(\Omega)}	\|\na n\|_{L^2(\Omega)} \\
	&\le C\|n\|_{L^{r'_0}(\Omega)}\big(1+\|n-p-D+A\|_{L^{2r_0/(r_0+2)}(\Omega)}\big)
	\|\na n\|_{L^2(\Omega)} \\
	&\le \delta\|\na n\|_{L^2(\Omega)}^{1+2d/(d+2)} + C(n,p,D,\delta),
\end{align*}
where $\delta>0$, and $C(n,p,D,\delta)>0$ depends on the $L^1\log L^1$ norms of 
$n$, $p$, and $D$. The first term on the left-hand side can be absorbed, for
sufficiently small $\delta>0$, by the gradient term coming from the diffusion part
if the exponent is not larger than two, and this is the case if and only if $d\le 2$.

Our strategy is different. As in \cite{GlHu97}, the key estimate comes from the
free energy functional
\begin{align}\label{1.H}
  H[n,p,D,V] &= \int_\Omega\bigg\{n\bigg(\log\frac{n}{\nD }-1\bigg)
	+ p\bigg(\log\frac{p}{\pD }-1\bigg) + D(\log D-1+\VD )\bigg\}dx \\
	&\phantom{xx}{}+ \frac{\lambda^2}{2}\int_\Omega|\na(V-\VD)|^2 dx. \nonumber
\end{align}
The first integral models the thermodynamic entropy, while the second integral
corresponds to the electric energy. We prove in Theorem \ref{thm.glob} that
\begin{equation}\label{1.dHdt}
  \frac{dH}{dt} + \int_\Omega\bigg(\frac{n}{2\eps}|\na(\log n-V)|^2
	+ \frac{p}{2\eps}|\na(\log p+V)|^2 + \frac{D}{2}|\na(\log D+V)|^2\bigg)dx
	\le C(\nD,\pD,\VD).
\end{equation}
While the authors of \cite{GlHu97} have used this free energy inequality
as the starting point to derive iteratively $L^\infty$ estimates in two
space dimensions, we use
another argument that allows us to obtain a global existence result in any
space dimension. 

More precisely, we prove that \eqref{1.dHdt} implies an $L^2(0,T;W^{1,1}(\Omega))$
bound for $\sqrt{n_k}$ (as well as $\sqrt{p_k}$ and $\sqrt{D_k})$,
where $(n_k,p_k,D_k,V_k)$ is a solution to an approximate problem with $k\in\N$.
This bound is not sufficient to deduce strong compactness.
By a cutoff argument, we show that $n_k$ (as well as $p_k$ and $D_k$) are bounded
in $L^r(0,T;W^{1,r}(\Omega_\delta))$, where $\Omega_\delta=\{x\in\Omega:
\operatorname{dist}(x,\pa\Omega)>\delta\}$ and $r>1$. By the Aubin--Lions lemma, we
conclude strong $L^s(\Omega_\delta\times(0,T))$ convergence of $n_k$ for $s<r$, and
by the Theorem of de la Vall\'ee--Poussin, weak $L^1(\Omega\times(0,T))$ convergence
of $n_k$. Then we deduce from a Cantor diagonal argument the strong 
$L^1(\Omega\times(0,T))$ convergence of $n_k$ (as well as $p_k$ and $D_k$).
This is the key argument to prove the global existence of weak solutions to
\eqref{1.n}--\eqref{1.nbcd} in any space dimension. Our
strategy extends the results of \cite{BFS14,GlHu97,GlHu05}.

The second main result of this paper is the fast-relaxation limit $\eps\to 0$ 
for the solutions $(n_\eps,p_\eps,D_\eps,V_\eps)$ to 
\eqref{1.n}--\eqref{1.nbcd}. We expect that $J_n=J_p=0$ holds in the limit,
leading to $n_\eps=c_n e^{V_\eps}$ and $p_\eps=c_p e^{-V_\eps}$, where 
$c_n$, $c_p>0$ are constants determined by the Dirichlet boundary data. 
The limit $\eps\to 0$ was already 
performed in a two-species drift-diffusion system \cite{JuPe00}, 
exploiting a uniform lower positive bound for $n_\eps$ and $p_\eps$. 
Unfortunately, this argument cannot be used for our three-species system, 
and we need another idea.

The starting point is again the free energy inequality \eqref{1.dHdt}, showing that
$$
  \sqrt{n_\eps}\na(\log n_\eps-V_\eps)\to 0, \quad
	\sqrt{p_\eps}\na(\log p_\eps+V_\eps)\to 0
$$
strongly in $L^2(Q_T)$ as $\eps\to 0$, where $Q_T=\Omega\times(0,T)$. 
Since equation \eqref{1.Nd} for $D_\eps$ does not contain $\eps$, we obtain
$D_\eps\to D_0$ strongly in $L^1(Q_T)$ from the Aubin--Lions lemma.
As in \cite{JuPe00}, the key step is to prove the strong convergence of
$\na V_\eps$, but in contrast to that work, we are lacking some estimates.
We formulate the Poisson equation for $V_\eps$ as
$$
  \lambda^2\Delta V_\eps = c_n e^{V_\eps} - c_p e^{-V_\eps} - D_\eps + A(x) 
  + E_\eps,
$$
where $E_\eps$ is an error term.
Similar as in \cite{JuPe00}, we exploit the monotonicity of $V_\eps\mapsto
c_n e^{V_\eps} - c_p e^{-V_\eps}$ to prove that $(\na V_\eps)$ is a Cauchy sequence
and hence convergent in $L^2(Q_T)$. The novelty is the proof of
$E_\eps\to 0$ as $\eps\to 0$. Here, we need an $L^\infty(Q_T)$ bound for $V_\eps$,
and this is possible (only) in two space dimensions, according to \cite{Gro94}:
$$
  \|V_\eps\|_{L^\infty(\Omega)} \le C\big(1+\|(n_\eps-p_\eps-D_\eps+A)
	\log|n_\eps-p_\eps-D_\eps+A|\|_{L^1(\Omega)}\big) \le C.
$$
We infer that $\na V_\eps\to\na V_0$ strongly in $L^2(Q_T)$,
and $V_0$ solves the limiting Poisson equation 
$\lambda^2\Delta V_0 = c_n e^{V_0} - c_p e^{-V_0} - D_0 + A(x)$.
Note that, in contrast to \cite{JuPe00},
we need the restriction to two space dimensions.

\subsection{Main results}

We impose the following assumptions.

\begin{itemize}
\item[(A1)] Domain: $\Omega\subset\R^d$ ($d\ge 1$) is a bounded domain with Lipschitz
boundary $\pa\Omega=\Gamma_D\cup\Gamma_N$, $\operatorname{meas}(\Gamma_D)>0$,
and $\Gamma_N$ is relatively open in $\pa\Omega$. 
\item[(A2)] Data: $T>0$, $\eps>0$, $\lambda>0$, $A\in L^\infty(\Omega)$. 
\item[(A3)] Boundary data: $\nD$, $\pD$, $\VD\in W^{1,\infty}(\Omega)$ satisfy
$\nD$, $\pD>0$ in $\Omega$.
\item[(A4)] Initial data: $n^I$, $p^I$, $D^I\in L^2(\Omega)$ 
satisfy $n^I$, $p^I$, $D^I\ge 0$ in $\Omega$.
\end{itemize}

We set $Q_T=\Omega\times (0,T)$,
$H_D^1(\Omega)=\{u\in H^1(\Omega):u=0$ on $\Gamma_D\}$,
and we introduce the initial electric potential $V^I-\VD\in H_D^1(\Omega)$ as the 
unique solution to
\begin{align*}
  & \lambda^2\Delta V^I = n^I-p^I-D^I+A\quad\mbox{in }\Omega, \\
	& V^I = \VD  \quad\mbox{on }\Gamma_D, \quad \na V^I\cdot\nu=0\quad\mbox{on }\Gamma_N.
\end{align*}
The boundary data in Assumption (A3) are supposed to be time-independent to
simplify the computations.
In two space dimensions, it is sufficient to assume in Assumption (A4) that
$n^I\log n^I$, $p^I\log p^I$, $D^I\log D^I\in L^1(\Omega)$ since
\cite[Lemma 2.2]{Gro96} implies that $n^I-p^I-D^I+A\in H_D^1(\Omega)'$.
The regularity conditions in Assumptions (A3) and (A4) are imposed for simplicity;
they can be slightly weakened. 

Our first main result is the global existence of weak solutions in any
space dimension. 

\begin{theorem}[Global existence]\label{thm.glob}
Let Assumptions (A1)--(A4) hold. Then there exists a weak solution $(n,p,D,V)$
to \eqref{1.n}--\eqref{1.nbcd} satisfying
\begin{align*}
  & n\log n,\,p\log p,\,D\log D\in L^\infty(0,T;L^1(\Omega)), \\
	& \sqrt{n},\,\sqrt{p},\,\sqrt{D}\in L^2(0,T;W^{1,1}(\Omega)), \quad
	J_n,\,J_p,\,J_D\in L^1(Q_T), \\
	& \pa_t n,\,\pa_t p,\in L^1(0,T;X'), \quad \pa_t D\in L^1(0,T;H^{s}(\Omega)'),
	\quad V\in L^\infty(0,T;H^1(\Omega)),
\end{align*}
where $X:=H^s(\Omega)\cap H_D^1(\Omega)$ and $s>1+d/2$. 
This solution satisfies the free energy inequality
\begin{align}\label{1.ei}
  H[(n,p,D,V)(t)] &+ \int_0^t\int_\Omega\bigg(\frac{n}{2\eps}
	|\na(\log n-V)|^2 + \frac{p}{2\eps}|\na(\log p+V)|^2 \\
	&{}+ \frac{D}{2}|\na(\log D+V)|^2\bigg)dx ds
	\le H^I + C(H^I,\Lambda_\eps,T), \nonumber
\end{align}
where the initial free energy $H^I[n,p,D,V]$ is defined in \eqref{1.H}, 
$H^I := H[n^I,p^I,D^I,V^I]$, 
\begin{equation}\label{1.lambda}
	\Lambda_\eps := \frac{1}{2\eps}\big(\|\na(\log \nD -\VD )\|_{L^\infty(Q_T)}^2
	+ \|\na(\log \pD +\VD )\|_{L^\infty(Q_T)}^2\big),
\end{equation}
and it holds that $C(H^I,\Lambda_\eps,T)=0$ if $\Lambda_\eps=0$.
\end{theorem}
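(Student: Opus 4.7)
The plan is to construct a solution as the limit of a sequence $(n_k,p_k,D_k,V_k)$ of approximate solutions. For the approximation I would use a time-implicit (or Galerkin) scheme combined with a truncation of the drift coefficient $\na V$ and a regularization of the form $\log(n_k+1/k)$ in the logarithmic test functions. At each level, existence follows from a Leray--Schauder fixed-point argument on the truncated system, while nonnegativity is obtained by testing with the negative parts of $n_k, p_k, D_k$. The core a priori estimate is the free energy inequality \eqref{1.dHdt}. I derive it by inserting in the weak formulations of \eqref{1.n}--\eqref{1.Nd} the test functions $\log(n_k/\nD)-(V_k-\VD)$, $\log(p_k/\pD)+(V_k-\VD)$, and $\log D_k+(V_k-\VD)$, which lie in $H_D^1(\Omega)$ for the electron and hole equations (and in $H^1(\Omega)$ for the oxygen vacancies). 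Summing, using the Poisson equation \eqref{1.V} to handle the time derivative of the electric energy, and estimating the boundary contributions from $\nD,\pD,\VD$ by Young's inequality yields \eqref{1.ei} with the constant $\Lambda_\eps$ given in \eqref{1.lambda}.

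From \eqref{1.ei} one extracts uniform-in-$k$ bounds on $n_k\log n_k$, $p_k\log p_k$, $D_k\log D_k$ in $L^\infty(0,T;L^1(\Omega))$, on the entropy dissipation densities $\sqrt{n_k}\,\na(\log n_k-V_k)$, $\sqrt{p_k}\,\na(\log p_k+V_k)$, $\sqrt{D_k}\,\na(\log D_k+V_k)$ in $L^2(Q_T)$, and on $V_k-\VD$ in $L^\infty(0,T;H_D^1(\Omega))$. Writing $2\na\sqrt{n_k}=\sqrt{n_k}\,\na(\log n_k-V_k)+\sqrt{n_k}\,\na V_k$ and applying Cauchy--Schwarz together with $\sqrt{n_k}\in L^\infty(0,T;L^2(\Omega))$ (which follows from $n_k\log n_k\in L^\infty(0,T;L^1(\Omega))$), one obtains $\sqrt{n_k}\in L^2(0,T;W^{1,1}(\Omega))$, and analogously for $p_k$ and $D_k$. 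The same factorization shows that the currents $J_n$, $J_p$, $J_D$ are bounded in $L^2(0,T;L^1(\Omega))\subset L^1(Q_T)$, which in turn yields bounds on $\pa_t n_k$, $\pa_t p_k$ in $L^1(0,T;X')$ and on $\pa_t D_k$ in $L^1(0,T;H^s(\Omega)')$.

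The compactness step is the crux of the proof and the main obstacle. The $L^2(W^{1,1})$ control on $\sqrt{n_k}$ alone is not sufficient to deduce strong compactness of $n_k$ on the whole cylinder $Q_T$, because the time-derivative information is available for $n_k$ but not for $\sqrt{n_k}$, and the $W^{1,1}$ gradient bound does not transfer cleanly to $n_k$ up to the boundary. I therefore localize: using cutoffs $\chi\in C_c^\infty(\Omega_\delta)$ and test functions of the form $\chi^2\,\psi(n_k)$ with $\psi$ a truncated monotone function in the approximate equation, I upgrade the bound to $n_k\in L^r(0,T;W^{1,r}(\Omega_\delta))$ for some $r>1$ and every $\delta>0$. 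Combined with the $L^1(0,T;X')$ bound on $\pa_t n_k$, the Aubin--Lions lemma delivers strong convergence in $L^s(\Omega_\delta\times(0,T))$ for some $s<r$. In parallel, the uniform $L^\infty(0,T;L\log L(\Omega))$ bound makes $(n_k)$ uniformly integrable on $Q_T$ via the de la Vall\'ee--Poussin theorem, hence weakly relatively compact in $L^1(Q_T)$. A Cantor diagonal argument over $\delta=1/j$ then glues the local strong convergence to the global weak $L^1$ convergence, yielding strong $L^1(Q_T)$ convergence of $n_k$; the same procedure applies to $p_k$ and $D_k$, and strong convergence of $\na V_k$ in $L^2(Q_T)$ then follows from elliptic regularity for \eqref{1.V}.

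With these convergences in hand, I pass to the limit in each weak formulation. The drift terms are handled by writing $n_k\,\na V_k=\sqrt{n_k}\cdot(\sqrt{n_k}\,\na V_k)$, combining the almost-everywhere convergence of $\sqrt{n_k}$ with the weak $L^2$ limit of the bracketed factor, which is controlled through the entropy dissipation. Finally, the free energy inequality \eqref{1.ei} for the limit is recovered from the lower semicontinuity of the entropy and of the quadratic dissipation functionals, together with Fatou's lemma in time. The localization/diagonalization argument is what removes the usual two-dimensional restriction found in \cite{BFS14,GlHu97,GlHu05}.
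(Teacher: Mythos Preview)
Your overall architecture is exactly that of the paper: truncated approximate problem, Leray--Schauder existence, nonnegativity via negative parts, free energy inequality giving the $L\log L$ and dissipation bounds, \emph{local} $W^{1,r}$ estimates on $\Omega_\delta$, Aubin--Lions there, De la Vall\'ee--Poussin plus a Cantor diagonal to upgrade to strong $L^1(Q_T)$ convergence, and passage to the limit via the factorization $n_k\na V_k=\sqrt{n_k}\cdot(\sqrt{n_k}\na V_k)$.

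Two points deserve sharpening. First, and most importantly, your description of the local estimate (``test with $\chi^2\psi(n_k)$'') does not explain how you would control the drift contribution $\int \chi^2 T_k(n_k)\psi'(n_k)\na V_k\cdot\na n_k$, which is precisely the cubic obstruction that prevents the naive $L^2$ estimate in $d\ge 3$. The paper does \emph{not} test the equation here. Instead it expands the already-bounded entropy production $|\na h_k(n_k)-\sqrt{T_k(n_k)}\na V_k|^2\xi_\delta^2$ to isolate the cross term $2\na n_k\cdot\na V_k\,\xi_\delta^2$, \emph{sums over all three species}, and then uses the Poisson equation to rewrite $\na(n_k-p_k-D_k)\cdot\na V_k\,\xi_\delta^2$ as $-\lambda^{-2}(n_k-p_k-D_k)(n_k-p_k-D_k+A)\xi_\delta^2$ plus a boundary-layer term involving $\na\xi_\delta$. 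This produces a good sign for the cubic term and is what actually yields $\na h_k(n_k)\in L^2(\Omega_\delta\times(0,T))$ and hence $\na n_k\in L^r(\Omega_\delta\times(0,T))$ via $\na n_k=\sqrt{T_k(n_k)}\na h_k(n_k)$ and Gagliardo--Nirenberg. Your proposed single-species test does not see this cancellation.

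Second, you claim strong $L^2(Q_T)$ convergence of $\na V_k$ from elliptic regularity. With mixed boundary conditions and only $L^1$ convergence of the right-hand side this is not available, and the paper never uses it: weak* convergence of $\na V_k$ in $L^\infty(0,T;L^2)$ combined with strong $L^2$ convergence of $\sqrt{n_k}$ already suffices to pass to the limit in the currents (which is the argument you give in your last paragraph anyway). Minor further differences: the paper truncates the densities ($T_k(n_k)\na V_k$) rather than $\na V$, and handles the logarithmic singularity via the auxiliary functions $g_{k,\delta}$, $h_{k,\delta}$ rather than $\log(n_k+1/k)$; both choices are workable, but the $g_{k,\delta}$ formalism makes the chain rules $\na n_k=(T_k(n_k)+\delta)\na g'_{k,\delta}(n_k)$ and $\na n_k=\sqrt{T_k(n_k)+\delta}\,\na h_{k,\delta}(n_k)$ exact, which streamlines the derivation of \eqref{1.ei}.
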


The property $\Lambda_\eps=0$ means that the boundary data are in thermal
equilibrium. In this case, the free energy is a nonincreasing function of time.
The entropy production in \eqref{1.ei} is understood in the sense
$n|\na(\log n-V)|^2 = |2\na\sqrt{n}-\sqrt{n}\na V|^2$, i.e.. we have
$2\na\sqrt{n}-\sqrt{n}\na V\in L^2(Q_T)$. 

We approximate \eqref{1.n}--\eqref{1.V} by truncating the drift term and
proving the existence of a solution $(n_k,p_k,D_k,V_k)$ to the approximate
problem. Estimates uniform in the truncation parameter $k$ are obtained from
an approximate free energy inequality, similar to \eqref{1.dHdt}. As explained
before, we also derive uniform estimates in the domain $\Omega_\delta$,
which are needed to conclude the strong $L^1(Q_T)$ convergence of the approximate
solution. Because of low regularity, the difficulty is to identify the weak
limit of a truncated version of $2\na\sqrt{n_k}-\sqrt{n_k}\na V_k$. This is 
done by combining the free energy estimates and the Aubin--Lions lemma,
applied in the domain $\Omega_\delta\times(0,T)$.

Similarly, as in \cite{GlHu97}, we can prove, in the two-dimensional case,
that the weak solution from
Theorem \ref{thm.glob} is bounded uniformly in time. 

\begin{theorem}[Uniform $L^\infty$ bounds]\label{thm.infty}
Let (A1)--(A4) hold, let $d\le 2$, and let 
$n^I$, $p^I$, $D^I\in L^\infty(\Omega)$.
Furthermore, let $(n,p,D,V)$ be the weak solution to 
\eqref{1.n}--\eqref{1.nbcd} constructed in Theorem \ref{thm.glob}.
Then there exists a constant $C(\eps)>0$ depending on $\eps$
such that for all $t>0$,
$$
  \|n(t)\|_{L^\infty(\Omega)} + \|p(t)\|_{L^\infty(\Omega)} 
	+ \|D(t)\|_{L^\infty(\Omega)} \le C(\eps).
$$
\end{theorem}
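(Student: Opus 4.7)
The plan is to bootstrap from the free-energy bounds of Theorem \ref{thm.glob} to a uniform $L^\infty$ bound by first using the Gr\"oger-type elliptic estimate (which is sharp in two space dimensions) to control $V$, and then running a Moser iteration on each of the three evolution equations; this broadly mirrors the strategy of \cite{GlHu97}.

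I would begin with the free-energy inequality \eqref{1.ei}, which supplies $n\log n$, $p\log p$, $D\log D \in L^\infty(0,T;L^1(\Omega))$, so that $n-p-D+A$ is bounded in $L^\infty(0,T; L\log L(\Omega))$. Since $d=2$, Lemma \ref{lem.vinfty} applied to the Poisson equation yields uniformly in $t\ge 0$
\[
\|V(t)\|_{L^\infty(\Omega)} \le C\bigl(1+\|(n-p-D+A)\log|n-p-D+A|\|_{L^1(\Omega)}\bigr) \le C,
\]
together with $\nabla V\in L^\infty(0,T;L^r(\Omega))$ for some $r>2$.

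Next I would run a Moser iteration. Set $K_0 := \max\{\|n^I\|_\infty,\|p^I\|_\infty,\|D^I\|_\infty,\|\nD\|_\infty,\|\pD\|_\infty\}$ and fix $K\ge K_0$. For each $q\ge 2$, test \eqref{1.n} against $(n-K)_+^{q-1}$ (which lies in $H_D^1(\Omega)$ because $K$ dominates the Dirichlet data), test \eqref{1.p} against $(p-K)_+^{q-1}$, and test \eqref{1.Nd} against $D^{q-1}$ directly. The diffusion terms produce dissipations of the form $\|\nabla(n-K)_+^{q/2}\|_{L^2}^2$, and for $n$ and $p$ the drift terms, after substituting the Poisson equation \eqref{1.V}, yield the key favourable contribution
\[
-\frac{q-1}{q\lambda^2}\int (n-K)_+^q\bigl[(n-K)_+ + K - p - D + A\bigr]dx
\]
(and the analogous expression for $p$), whose leading $-\|(n-K)_+\|_{q+1}^{q+1}$ piece is an additional dissipation. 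The cross terms coupling the three species are distributed by Young's inequality and absorbed into these higher-order dissipations after summing the three evolution inequalities. For the $D$-equation, since $\nabla V\cdot\nu$ need not vanish on $\Gamma_D$, one cannot integrate the drift against Poisson as cleanly; instead the drift is bounded directly using $\nabla V\in L^r$ together with the two-dimensional Gagliardo--Nirenberg inequality applied to $D^{q/2}$, and the resulting power of $\|\nabla D^{q/2}\|_{L^2}$ is strictly less than $2$ (because $r>2$), so it is absorbed into the diffusion. A standard Moser recursion in $q$ then closes the argument and produces the uniform $L^\infty$ bound.

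The main obstacle is the control of the drift, which couples the three species through $\nabla V$. This is precisely where the restriction $d\le 2$ enters: we need $\nabla V\in L^r$ with $r>2$, which is available only in two dimensions via the Gr\"oger estimate applied to an $L\log L$ right-hand side, and we need the two-dimensional Gagliardo--Nirenberg exponent structure so that the drift can be absorbed into the diffusion at every step of the iteration. A secondary technical point is the no-flux boundary condition on $D$, which as noted above is handled by bounding the drift directly rather than using the Poisson equation.
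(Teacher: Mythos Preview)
Your proposal has a genuine gap at the second step. You invoke Lemma~\ref{lem.vinfty} to conclude both $V\in L^\infty$ and $\nabla V\in L^\infty_t L^r_x$ for some $r>2$ from the $L\log L$ bound on $n-p-D+A$. The first conclusion is fine, but the second is not: the $W^{1,r_0}$ estimate in Lemma~\ref{lem.vinfty} requires the right-hand side in $L^{2r_0/(r_0+2)}$, i.e.\ in $L^p$ for some $p>1$, and $L\log L$ does not embed into any such space. The paper closes this gap by first bootstrapping to $L^\infty_t L^2_x$ (Lemma~\ref{lem.L2}): one tests with $n_k-\nD$, $p_k-\pD$, $D_k$, estimates the drift via H\"older with $\|\nabla V_k\|_{L^{r_0}}$, and bounds the latter by the \emph{generalized} Gagliardo--Nirenberg inequality~\eqref{GN2} involving the $L\log L$ norm, which yields a subcritical power of the $H^1$ norm. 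Only after this $L^2$ step does Lemma~\ref{lem.Vk} give $\nabla V_k\in L^\infty_t L^{r_0}_x$, and then the Moser/Alikakos iteration can run.

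There is also a structural issue with your Moser step. For $n$ and $p$ you integrate the drift against the Poisson equation to extract a favourable $-\|(n-K)_+\|_{q+1}^{q+1}$, but this produces cross terms of the form $\int (n-K)_+^q D\,dx$. Since for $D$ you do \emph{not} use the Poisson trick, there is no matching $-\|D\|_{q+1}^{q+1}$ dissipation to absorb these cross terms after summing; you would have to control $\int D^{q+1}$ via Gagliardo--Nirenberg and the $H^1$ norm of $D^{q/2}$, and tracking the resulting powers through the iteration is delicate. The paper sidesteps this entirely by treating all three species symmetrically: once $\nabla V_k\in L^\infty_t L^{r_0}_x$ is available, the drift in \emph{every} equation is estimated directly by $\|\nabla V_k\|_{L^{r_0}}$ and Gagliardo--Nirenberg, with no recourse to the Poisson equation and hence no cross-coupling. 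A final technical point: the paper works at the approximate level $(n_k,p_k,D_k,V_k)$ and first proves $k$-dependent $L^\infty$ bounds (Lemma~\ref{lem.inftyk}) so that powers like $(D_k-M)_+^{q-1}$ are admissible test functions; your proposal tests the limiting weak solution directly, which requires justification.
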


The theorem is proved by an Alikakos-type iteration method.
The restriction to two space dimensions comes from the regularity
\eqref{1.regulV} for the electric potential. The rough idea of the proof is
to choose $n_k^{q-1}$ in the weak formulation of \eqref{1.n} (and similarly
for $p_k$ and $D_k$) and to derive an estimate in $L^q(Q_T)$, which is uniform
in $k$ and $q$. Then the limit $k$, $q\to\infty$ gives the desired $L^\infty(Q_T)$ 
bound. Since $n_k^{q-1}$ is generally not an $H^1(\Omega)$ function for $q>2$, 
we prove first that a truncated version of $n_k^{q-1}$ lies in $L^\infty(Q_T)$,
but possibly not uniformly in $k$. Next, we choose $e^t\max\{0,n_k-M\}^{q-1}$
for sufficiently large $M>0$ as a test function, show that $n_k\in L^q(Q_T)$ 
uniformly in $k$, $q$, and $T$, and pass to the limit $k$, $q\to\infty$. 
The factor $e^t$ is needed to obtain a time-uniform estimate.

Next, we study the limit problem, which is formally obtained by setting
$\eps=0$ in \eqref{1.n}--\eqref{1.V} and taking into account the Dirichlet data:
\begin{align}
  & \pa_t D_0 = \diver(\na D_0+D_0\na V_0), \label{1.limD} \\ 
	& \lambda^2\Delta V_0 = c_n e^{V_0} - c_p e^{-V_0} - D_0 + A(x) 
	\quad\mbox{in }\Omega,\ t>0, \label{1.limV} \\
	& (\na D_0+D_0\na V_0)\cdot\nu=0\quad\mbox{on }\pa\Omega, \quad
	D_0(\cdot,0) = D^I\quad\mbox{in }\Omega, \label{1.limbcD} \\
	& V_0 = \VD\quad\mbox{on }\Gamma_D, \quad \na V_0\cdot\nu = 0 
	\quad\mbox{on }\Gamma_N, \label{1.limbcV}
\end{align}
where $c_n=\nD\exp(-\VD)$, $c_p=\pD\exp(\VD)$, and the electron and hole densities
are determined by $n_0 = c_n\exp(V_0)$ and $p_0 = c_p\exp(-V_0)$, respectively.
We show the global existence of weak solutions and
verify a weak-strong uniqueness property.

\begin{theorem}[Existence and weak-strong uniqueness for the limit problem]
\label{thm.lim}
Let Assumptions (A1)--(A4) hold and let $\nD\ge c>0$, $\pD\ge c>0$ in $\Omega$.
Then there exists a bounded weak solution
$(D_0,V_0)$ to \eqref{1.limD}--\eqref{1.limbcV}. Moreover, 
if $(D,V)$ is a weak solution and
$(D_0,V_0)$ a bounded strong solution to \eqref{1.limD}--\eqref{1.limV}
satisfying
$$
  \inf_{Q_T}D_0>0, \ D_0, \na\log D_0, V_0, \na V_0\in L^\infty(Q_T), \
	\pa_t D_0,\pa_t V_0\in L^1(0,T;L^\infty(\Omega)),
$$
then $D=D_0$, $V=V_0$ in $\Omega\times(0,T)$.
\end{theorem}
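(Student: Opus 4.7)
My plan for Theorem~\ref{thm.lim} splits into an existence part, handled by a decoupled fixed-point scheme, and a relative-entropy argument for weak-strong uniqueness.

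For existence, I would iterate between the two equations. Given a nonnegative $\tilde D\in L^2(Q_T)$ with controlled mass, first solve the nonlinear Poisson equation
\[
  \lambda^2\Delta V = c_n e^V - c_p e^{-V} - \tilde D + A
\]
with boundary conditions \eqref{1.limbcV}. This is the Euler--Lagrange equation of a strictly convex coercive functional, so $V$ exists uniquely in $\VD + H_D^1(\Omega)$; moreover the exponential nonlinearity forces an a priori two-sided $L^\infty$ bound on $V$ in terms of $\|\tilde D\|_{L^q}$ (via comparison with constant sub/supersolutions or Moser iteration built on the Gr\"oger regularity \eqref{1.regulV}). With this $V$, solve the linear drift-diffusion equation $\pa_t D=\diver(\na D+D\na V)$ with no-flux data by Galerkin approximation; nonnegativity is preserved by testing with $D^-$, mass is conserved, and the entropy estimate from testing with $\log D+V$ combined with $V\in L^\infty$ yields $\sqrt D\in L^2(0,T;H^1(\Omega))$ and $\pa_t D\in L^1(0,T;H^s(\Omega)')$. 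Setting $\Phi(\tilde D):=D$, the Aubin--Lions lemma produces strong $L^2(Q_T)$ compactness of $\Phi$, and Schauder's theorem yields a weak solution $(D_0,V_0)$; the final $L^\infty$ bound on $D_0$ follows by an Alikakos iteration in the spirit of Theorem~\ref{thm.infty}, now available because the exponential nonlinearity in the Poisson equation already guarantees $V_0\in L^\infty$ at every time.

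For weak-strong uniqueness, let $(D,V)$ be a weak solution and $(D_0,V_0)$ the strong one, and introduce the relative free energy
\[
  \mathcal{H}(t) = \int_\Omega\Big(D\log\tfrac{D}{D_0}-(D-D_0)\Big)dx
  + \frac{\lambda^2}{2}\int_\Omega|\na(V-V_0)|^2\,dx,
\]
which is nonnegative. Differentiating $\mathcal H$ in time, testing the equation for $D-D_0$ against $\log(D/D_0)+(V-V_0)$, and using the elliptic identity $\lambda^2\Delta(V-V_0)=c_n(e^V-e^{V_0})-c_p(e^{-V}-e^{-V_0})-(D-D_0)$ (differentiated in $t$ to replace $\pa_t V$ by $\pa_t V_0$ and $\pa_t D$), one arrives at an identity of the form
\[
  \frac{d\mathcal H}{dt} + \int_\Omega D\bigl|\na\log\tfrac{D}{D_0}+\na(V-V_0)\bigr|^2 dx
  + \int_\Omega\bigl[c_n(e^V-e^{V_0})-c_p(e^{-V}-e^{-V_0})\bigr](V-V_0)\,dx
  = \mathcal R,
\]
where the third left-hand term is nonnegative by the strict monotonicity of $z\mapsto c_n e^z-c_p e^{-z}$, and $\mathcal R$ collects cross terms involving $\na\log D_0$, $\na V_0$, and $\pa_t V_0$. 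Bounding $\mathcal R\le C\mathcal H$ via Young's inequality, the strong-solution $L^\infty$ bounds, and a Csisz\'ar--Kullback--Pinsker estimate to dominate $\|D-D_0\|_{L^1}^2$ by the first summand of $\mathcal H$, Gr\"onwall forces $\mathcal H\equiv 0$. Since $V-V_0\in H_D^1(\Omega)$ with $\operatorname{meas}(\Gamma_D)>0$, Poincar\'e gives $V=V_0$, and consequently $D=D_0$.

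The main obstacle I anticipate is the bookkeeping required to bound $\mathcal R$. Completing the square in the parabolic dissipation produces remainders of the form $\int D\,\na\log D_0\cdot\na\log(D/D_0)\,dx$ and $\int D\,\na V_0\cdot\na(V-V_0)\,dx$ that must be absorbed into a fraction of the square plus $\|\na(V-V_0)\|_{L^2}^2$ and $\int\Phi(D|D_0)\,dx$; this absorption relies crucially on the hypotheses $\na\log D_0,\na V_0\in L^\infty$ and $\inf D_0>0$. On the electric side, writing $\tfrac{d}{dt}\tfrac{\lambda^2}{2}\|\na(V-V_0)\|_{L^2}^2=-\lambda^2\int\pa_t(V-V_0)\Delta(V-V_0)\,dx$ and substituting the Poisson equation generates a delicate term $\int\pa_t V_0\,(D-D_0)\,dx$ that is handled precisely by $\pa_t V_0\in L^1(0,T;L^\infty(\Omega))$ and the $L^1$ control of $D-D_0$ by $\sqrt{\mathcal H}$. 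Making these manipulations rigorous at the level of the weak solution requires first justifying the test functions $\log(D/D_0)+(V-V_0)$ via a truncation $D\mapsto\max(D,\delta)$ and a subsequent passage $\delta\to 0^+$, keeping all constants independent of $\delta$; this is the one nonroutine step.
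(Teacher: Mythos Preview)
Your existence sketch is a reasonable alternative to the paper's truncation-and-limit scheme, though the claimed two-sided $L^\infty$ bound on $V$ in terms of $\|\tilde D\|_{L^q}$ deserves more care (the paper instead proves strong convergence of $V_{0,k}$ via a Cauchy-sequence argument with a nonlinear test function, avoiding any dimension-dependent elliptic regularity). The serious gaps are in the weak-strong uniqueness part.

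First, your relative entropy $\mathcal H$ is incomplete. When you compute $\tfrac{d}{dt}\tfrac{\lambda^2}{2}\|\na(V-V_0)\|_{L^2}^2=-\int\pa_t(V-V_0)\,\lambda^2\Delta(V-V_0)$ and substitute the Poisson equation, you do \emph{not} only produce $\int\pa_t V_0(D-D_0)$; you also produce $\int\pa_t(V-V_0)\,[c_n(e^V-e^{V_0})-c_p(e^{-V}-e^{-V_0})]$, which contains $\pa_t V$ for the weak solution and is not controlled. The paper absorbs precisely these terms by enlarging the relative entropy with $c_n e^{V_0}f_0(V-V_0)+c_p e^{-V_0}f_0(V_0-V)$, $f_0(s)=(s-1)e^s+1$; equivalently, it replaces the electric-energy time derivative by $\langle\pa_t(D-D_0),V-V_0\rangle$ and shows this dominates $\tfrac{d}{dt}H_2[V|V_0]$ up to a term controlled by $\|\pa_t V_0\|_{L^\infty}$ times $\mathcal H$. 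Without those extra pieces your identity with the monotone term on the left does not hold.

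Second, and more critically, the remainder you must actually control after completing the square is $-\int(D-D_0)\,\na(V-V_0)\cdot(\na\log D_0+\na V_0)\,dx$ (this is the right-hand side of the paper's identity corresponding to your claimed formula). Young's inequality plus Csisz\'ar--Kullback--Pinsker does \emph{not} close here: CKP gives only $\|D-D_0\|_{L^1}^2\le C H_1$, whereas a Young split would require $\|D-D_0\|_{L^2}^2\le C H_1$, which is false for unbounded weak $D$ (the integrand $D\log(D/D_0)-D+D_0$ grows only like $D\log D$, not $D^2$). The paper resolves this by splitting according to the sign of $D-D_0$. For $(D-D_0)_-$ one has $0\le D\le D_0\le\|D_0\|_\infty$ and the quadratic bound holds. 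For $(D-D_0)_+$ the paper rewrites the term as $\int\na F(D/D_0)\cdot(\na D_0+D_0\na V_0)\,dx$ with $F(s)=(s-1-\log s)\mathrm{1}_{\{s>1\}}$, integrates by parts using the strong equation $\pa_t D_0=\diver(\na D_0+D_0\na V_0)$, and invokes the hypothesis $\pa_t D_0\in L^1(0,T;L^\infty)$ to bound $-\langle\pa_t D_0,F(D/D_0)\rangle$ by $\gamma(t)H_1$. This is the essential idea your plan is missing; it is also the reason $\pa_t D_0\in L^1(0,T;L^\infty(\Omega))$ appears among the assumptions.
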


For the proof of the existence of a weak solution to 
\eqref{1.limD}--\eqref{1.limbcV}, we use the techniques of the proof of
Theorem \ref{thm.glob}. Let $(D_{0,k},V_{0,k})$ be a solution to a
truncated problem. The approximate free energy inequality gives us only
the weak convergence of $V_{0,k}$, which is not sufficient to perform 
the limit $k\to\infty$ in the nonlinear Poisson equation.
We need the strong convergence of $V_{0,k}$. Our idea is to derive
first an $L^1(Q_T)$ bound for $V_{0,k}\exp(|V_{0,k}|)$, which follows
from the free energy inequality for the reduced model or directly from
the nonlinear Poisson equation. Second, we
prove that $(V_{0,k})$ is a Cauchy sequence. This is done by taking a particular
nonlinear test function in the Poisson equation, satisfied by the difference
$V_{0,k}-V_{0,\ell}$, which leads to
\begin{align*}
 \lambda^2&\int_0^T\int_\Omega\frac{2+(V_{0,k}-V_{0,\ell})^2}{2
	(1+(V_{0,k}-V_{0,\ell})^2)^{5/4}}|\na(V_{0,k}-V_{0,\ell})|^2 dxdt \\
	&\phantom{xx}{}+ C\int_0^T\int_\Omega
    \frac{(V_{0,k}-V_{0,\ell})(\sinh(V_{0,k})-\sinh(V_{0,\ell}))}{
    (1+(V_{0,k}-V_{0,\ell})^2)^{1/4}}dxdt \\
	&\le  \int_0^T\int_\Omega F(V_{0,k}-V_{0,\ell},D_{0,k},D_{0,\ell})dxdt
\end{align*}
for some constant $C>0$, where $F$ is some function; 
see Section \ref{sec.prooflim} for details. 
Using the Fenchel--Young inequality and
the De la Valle\'e--Poussin theorem, 
the right-hand side is shown to converge to zero as $k$, $\ell\to\infty$.
Then the properties of the hyperbolic sine function prove the claim.

The weak-strong uniqueness property is based on an estimation of the
relative free energy
\begin{align*}
  H[(D,V)|(D_0,V_0)] &= \int_\Omega\bigg(\frac{\lambda^2}{2}|\na(V-V_0)|^2
	 + c_n e^{V_0}f_0(V-V_0) + c_p e^{-V_0}f_0(V_0-V)\bigg)dx \\
	&\phantom{xx}{}+ \int_\Omega\bigg(D\log\frac{D}{D_0}-D+D_0\bigg)dx,
\end{align*}
where $f_0(s)=(s-1)e^s+1$ for $s\in\R$. The idea is to show that
$$
  \frac{dH}{dt}[(D,V)|(D_0,V_0)] + \frac12\int_\Omega
	D\bigg|\na\bigg(\log\frac{D}{D_0}+V-V_0\bigg)\bigg|^2 dx
	\le \gamma(t)H[(D,V)|(D_0,V_0)]
$$
for some $\gamma\in L^1(0,T)$ depending on the regularity of $(D_0,V_0)$.
By Gronwall's lemma, $H[(D,V)(t)|(D_0,V_0)(t)]=0$, proving that
$(D,V)(t)=(D_0,V_0)(t)$ for $t\in (0,T)$.

Our final main result is the fast-relaxation limit $\eps\to 0$.

\begin{theorem}[Limit $\eps\to 0$]\label{thm.eps}
Let $d\le 2$, $\nD =c_n\exp(\VD)$, and $\pD =c_p\exp(-\VD)$
in $\Omega$ for some positive constants $c_n$ and $c_p$. Let 
$(n_\eps,p_\eps,D_\eps,V_\eps)$ be a weak solution to \eqref{1.n}--\eqref{1.nbcd}
and $(n_0,p_0,D_0,V_0)$ be a weak solution to \eqref{1.limD}--\eqref{1.limV}. 
Then there exists a subsequence such that, as $\eps'\to 0$,
\begin{align*}
  & n_{\eps'}\to n_0,\quad p_{\eps'}\to p_0, \quad D_{\eps'}\to D_0
	\quad\mbox{strongly in }L^1(Q_T), \\
	& \na D_{\eps'}+D_{\eps'}\na V_{\eps'}\rightharpoonup \na D_0+D_0\na V_0
	\quad\mbox{weakly in }L^1(Q_T), \\ 
    & \pa_t D_{\eps'}\rightharpoonup \pa_t D_0
	\quad\mbox{weakly in }L^1(0,T;H^{s}(\Omega)'), \\	
    & V_{\eps'}\to V_0 \quad\mbox{strongly in }L^2(0,T;H^1(\Omega)),
\end{align*}
where $s>1+d/2$,
and $(D_0,V_0)$ is a weak solution to \eqref{1.limD}--\eqref{1.limbcV}.
\end{theorem}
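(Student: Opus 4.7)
The plan is to exploit the uniform bound on the entropy production coming from the free energy inequality of Theorem \ref{thm.glob}. Because the boundary data satisfy $\nD = c_n\exp(\VD)$ and $\pD = c_p\exp(-\VD)$, we have $\na(\log\nD - \VD) = 0$ and $\na(\log\pD + \VD) = 0$ in $\Omega$, so $\Lambda_\eps = 0$ and Theorem \ref{thm.glob} gives, uniformly in $\eps$,
$$
  H[(n_\eps,p_\eps,D_\eps,V_\eps)(t)] + \int_0^t\int_\Omega\bigg(\frac{n_\eps}{2\eps}|\na(\log n_\eps-V_\eps)|^2 + \frac{p_\eps}{2\eps}|\na(\log p_\eps+V_\eps)|^2 + \frac{D_\eps}{2}|\na(\log D_\eps+V_\eps)|^2\bigg)dxds \le H^I.
$$
From this I extract an $L^\infty(0,T;L^1(\Omega))$ bound for $n_\eps\log n_\eps$, $p_\eps\log p_\eps$, $D_\eps\log D_\eps$, an $L^\infty(0,T;L^2(\Omega))$ bound for $\na V_\eps$, an $L^2(0,T;W^{1,1}(\Omega))$ bound for $\sqrt{n_\eps}$, $\sqrt{p_\eps}$, $\sqrt{D_\eps}$, and crucially the fast-equilibration bounds $\sqrt{n_\eps}\,\na(\log n_\eps-V_\eps), \sqrt{p_\eps}\,\na(\log p_\eps+V_\eps) = O(\sqrt{\eps})$ in $L^2(Q_T)$.

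First I would obtain strong $L^1(Q_T)$ compactness of $(D_\eps)$. Since \eqref{1.Nd} does not contain $\eps$, the very same cutoff/localization on $\Omega_\delta$ used in Theorem \ref{thm.glob}, combined with the Aubin--Lions lemma and the de la Vall\'ee--Poussin/Cantor diagonal argument, yields $D_\eps \to D_0$ strongly in $L^1(Q_T)$, the weak $L^1(Q_T)$ convergence of $\na D_\eps + D_\eps\na V_\eps$ to some limit, and $\pa_t D_\eps \rightharpoonup \pa_t D_0$ weakly in $L^1(0,T;H^s(\Omega)')$. The same procedure also yields strong $L^1$ convergence of $n_\eps$ and $p_\eps$ along a further subsequence.

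The main obstacle is the strong convergence $V_\eps \to V_0$ in $L^2(0,T;H^1(\Omega))$, because the drift $D_0\na V_0$ in the limit equation requires a.e.\ convergence of $\na V_\eps$. Following \cite{JuPe00}, I would rewrite the Poisson equation \eqref{1.V} as
$$
  \lambda^2\Delta V_\eps = c_n e^{V_\eps} - c_p e^{-V_\eps} - D_\eps + A(x) + E_\eps,\qquad E_\eps := (n_\eps - c_n e^{V_\eps}) - (p_\eps - c_p e^{-V_\eps}),
$$
subtract the equations for $V_\eps$ and $V_{\eps'}$, and test against $V_\eps - V_{\eps'}\in H^1_D(\Omega)$. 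The monotonicity of $V\mapsto c_n e^V - c_p e^{-V}$ then yields
$$
  \lambda^2\|\na(V_\eps-V_{\eps'})\|_{L^2(Q_T)}^2 \le \|E_\eps-E_{\eps'}\|_{H^1_D(\Omega)'}\|V_\eps-V_{\eps'}\|_{H^1_D(\Omega)} + \text{(terms in } D_\eps - D_{\eps'}\text{)},
$$
which reduces the problem to showing $E_\eps\to 0$. The key novelty (and the reason the argument is restricted to $d\le 2$) is the uniform $L^\infty(Q_T)$ bound on $V_\eps$ given by Lemma \ref{lem.vinfty},
$$
  \|V_\eps\|_{L^\infty(\Omega)} \le C\big(1+\|(n_\eps-p_\eps-D_\eps+A)\log|n_\eps-p_\eps-D_\eps+A|\|_{L^1(\Omega)}\big) \le C,
$$
which makes $e^{\pm V_\eps}$ uniformly bounded. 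Combined with the fast-equilibration estimate, a weighted Poincar\'e inequality in $H^1_D(\Omega)$, and the boundary identities $n_\eps = c_n e^{V_\eps}$, $p_\eps = c_p e^{-V_\eps}$ on $\Gamma_D$, this upgrades $\na(\log n_\eps-V_\eps)\to 0$ to strong convergence of $n_\eps - c_n e^{V_\eps}$ (and $p_\eps - c_p e^{-V_\eps}$) to zero in $L^1(Q_T)$, hence $E_\eps\to 0$. Together with the $L^1$ convergence of $D_\eps$ and the Poincar\'e inequality on $H^1_D(\Omega)$, this makes $(V_\eps)$ Cauchy in $L^2(0,T;H^1(\Omega))$.

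Finally, with strong $L^1(Q_T)$ convergence of $D_\eps$, strong $L^2(0,T;H^1(\Omega))$ convergence of $V_\eps$, and the uniform $L^\infty(Q_T)$ bound on $V_\eps$, the passage to the limit in the weak formulation of \eqref{1.Nd} is routine: the drift $D_\eps\na V_\eps$ converges weakly in $L^1(Q_T)$ to $D_0\na V_0$ by a Vitali/equi-integrability argument using the $L^\infty(0,T;L^2(\Omega))$ bound on $\na V_\eps$ and the $L^1\log L^1$ bound on $D_\eps$. The nonlinear Poisson equation \eqref{1.limV} follows from dominated convergence applied to $c_n e^{V_\eps}$ and $c_p e^{-V_\eps}$, and the Dirichlet trace of $V_0$ on $\Gamma_D$ is inherited from $V_\eps$. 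This identifies $(D_0,V_0)$ as a weak solution of \eqref{1.limD}--\eqref{1.limbcV} with $n_0 = c_n e^{V_0}$ and $p_0 = c_p e^{-V_0}$, completing the proof.
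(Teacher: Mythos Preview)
Your overall strategy matches the paper's: free energy with $\Lambda_\eps=0$, the two-dimensional $L^\infty$ bound on $V_\eps$ from Lemma~\ref{lem.vinfty}, rewriting Poisson with an error $E_\eps$, and a monotonicity/Cauchy argument for $V_\eps$. Two steps in your sketch, however, do not go through as written.

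First, the claim that ``the same procedure also yields strong $L^1$ convergence of $n_\eps$ and $p_\eps$'' via Aubin--Lions is unjustified: the local time-derivative bound in Theorem~\ref{thm.glob} reads $\eps\,\pa_t n_\eps\in L^r(0,T;W^{-1,r}(\Omega_\delta))$, so the bound on $\pa_t n_\eps$ itself blows up like $1/\eps$ and Aubin--Lions is unavailable for $n_\eps,p_\eps$. In the paper the strong $L^1$ convergence of $n_\eps$ and $p_\eps$ is obtained only \emph{after} establishing $V_\eps\to V_0$, by writing $n_\eps=e^{V_\eps}(n_\eps e^{-V_\eps})$ and combining $n_\eps e^{-V_\eps}\to c_n$ with the a.e.\ convergence of $e^{V_\eps}$ and uniform integrability. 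Your argument is not damaged by dropping this early claim, but you should move the conclusion for $n_\eps,p_\eps$ to the end.

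Second, and more substantively, your route to $E_\eps\to 0$ is incomplete. You write that one ``upgrades $\na(\log n_\eps-V_\eps)\to 0$'' via a weighted Poincar\'e inequality, but the entropy production only gives $\sqrt{n_\eps}\,\na(\log n_\eps-V_\eps)\to 0$ in $L^2(Q_T)$, and without a positive lower bound on $n_\eps$ a weighted Poincar\'e inequality with weight $n_\eps$ does not control $\log n_\eps-V_\eps$ or $n_\eps-c_n e^{V_\eps}$. The paper avoids this by passing to the \emph{Slotboom variable} $n_\eps e^{-V_\eps}$: since
\[
\na(n_\eps e^{-V_\eps})=e^{-V_\eps}(\na n_\eps-n_\eps\na V_\eps)
=e^{-V_\eps}\sqrt{n_\eps}\,\big(2\na\sqrt{n_\eps}-\sqrt{n_\eps}\na V_\eps\big),
\]
the $L^\infty$ bound on $V_\eps$ and the $L^\infty(0,T;L^2(\Omega))$ bound on $\sqrt{n_\eps}$ yield $\na(n_\eps e^{-V_\eps})\to 0$ in $L^1(Q_T)$; then the ordinary Poincar\'e inequality (using $n_\eps e^{-V_\eps}=c_n$ on $\Gamma_D$) gives $n_\eps e^{-V_\eps}\to c_n$ in $L^1(Q_T)$, and multiplying back by the bounded factor $e^{V_\eps}$ gives $n_\eps-c_n e^{V_\eps}\to 0$. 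This is the missing ingredient in your step; once it is in place, your Cauchy argument for $V_\eps$ (pairing the $L^1$-small $E_\eps-E_{\eps'}$ and $D_\eps-D_{\eps'}$ against the $L^\infty$-bounded $V_\eps-V_{\eps'}$, not an $H^1_D$--$(H^1_D)'$ pairing) and the passage to the limit proceed exactly as in the paper.
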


If the limit problem $(D_0,V_0)$ is uniquely solvable, we achieve the convergence
of the whole sequence. The uniqueness of bounded weak solutions can be proved 
under regularity conditions on the electric potential (e.g.\ $\na V_0\in L^\infty$;
see \cite{Jue97}). However, this regularity cannot generally be expected for
mixed Dirichlet--Neumann boundary conditions. 

The paper is organized as follows. Theorems \ref{thm.glob}, \ref{thm.infty},
\ref{thm.lim}, and \ref{thm.eps} are proved in Sections \ref{sec.glob},
\ref{sec.infty}, \ref{sec.lim}, and \ref{sec.eps}, respectively. 
Some numerical experiments in one space dimension are performed in Section
\ref{sec.num}. Finally,
Appendix \ref{sec.aux} is concerned with the proof of some properties for
the truncation functions, and we recall some 
auxiliary results used in this paper.


\section{Proof of Theorem \ref{thm.glob}}\label{sec.glob}

In this section, we prove the global existence of weak solutions to
\eqref{1.n}--\eqref{1.nbcd}. First, we show the existence of solutions
to an approximate problem, derive some uniform estimates, and then pass to
the de-regularization limit. 

\subsection{Approximate problem for \eqref{1.n}--\eqref{1.nbcd}}

We define the approximate problem by truncating the nonlinear drift terms.
For this, we introduce the truncation 
$$
  T_k(s) = \max\{0,\min\{k,s\}\}\quad\mbox{for }s\in\R,\ k\ge 1,
$$
and define the approximate problem
\begin{align}
  \eps\pa_t n_k &= \diver(\na n_k-T_k(n_k)\na V_k), \label{3.nk} \\
	\eps\pa_t p_k &= \diver(\na p_k+T_k(p_k)\na V_k), \label{3.pk} \\
	\pa_t D_k &= \diver(\na D_k+T_k(D_k)\na V_k), \label{3.Ndk} \\
	\lambda^2\Delta V_k &= n_k-p_k-D_k + A\quad\mbox{in }\Omega,\ t>0,
	\label{3.Vk}
\end{align}
supplemented by the initial and boundary conditions
\begin{align}
  n_k(\cdot,0)=n^I, \ p_k(\cdot,0)=p^I, \ D_k(\cdot,0)=D^I
	&\quad\mbox{in }\Omega, \label{3.ic} \\
  n_k=\nD , \ p_k = \pD , \ V_k = \VD &\quad\mbox{on }\Gamma_D,\ t>0, 
	\label{3.dbc} \\
  \na n_k\cdot\nu = \na p_k\cdot\nu = \na V_k\cdot\nu = 0
	&\quad\mbox{on }\Gamma_N,\ t>0, \label{3.nbc} \\
  (\na D_k+T_k(D_k)\na V_k)\cdot\nu = 0 &\quad\mbox{on }\pa\Omega,\ t>0.
	\label{3.nbcd}
\end{align}

\subsection{Existence of solutions to the approximate problem}

We prove that the approximate problem has a weak solution.

\begin{lemma}\label{lem.exk}
Let Assumptions (A1)--(A4) hold. Then there exists a weak solution 
$(n_k,p_k,D_k,$ $V_k)$
to \eqref{3.nk}--\eqref{3.nbcd} satisfying 
$n_k\ge 0$, $p_k\ge 0$, $D_k\ge 0$ in $Q_T=\Omega\times(0,T)$ and
\begin{align*}
  & n_k,\,p_k,\,D_k,\,V_k\in L^2(0,T;H^1(\Omega)), \\
	& \pa_t n_k,\,\pa_t p_k\in L^2(0,T;\V'), \quad
	\pa_t D_k\in L^2(0,T;H^1(\Omega)').
\end{align*}
\end{lemma}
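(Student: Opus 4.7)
The crucial point is that the truncations $T_k(\cdot)$ are bounded by $k$, so once we freeze the nonlinearity we are left with \emph{linear} parabolic equations with drift in $L^2$ and the semilinear Poisson equation degenerates to an affine one. The plan is therefore to construct the quadruple by a Schauder fixed point in $L^2(Q_T)^3$, obtain positivity by a truncation argument, and read off the time-derivative regularity directly from the equations.

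\textbf{Step 1: The solution map.} Given $(\tilde n,\tilde p,\tilde D)\in L^2(Q_T)^3$, I first solve the Poisson problem
\[
  \lambda^2\Delta V_k = \tilde n-\tilde p-\tilde D+A,\qquad V_k=\VD\ \text{on }\Gamma_D,\qquad \na V_k\cdot\nu=0\ \text{on }\Gamma_N,
\]
for each $t\in(0,T)$ by Lax--Milgram in $\V$, obtaining $V_k\in L^2(0,T;H^1(\Omega))$ with
\[
  \|\na V_k\|_{L^2(Q_T)}\le C\bigl(1+\|\tilde n-\tilde p-\tilde D\|_{L^2(Q_T)}\bigr).
\]
Now the coefficients $T_k(\tilde n)\na V_k$, $T_k(\tilde p)\na V_k$, $T_k(\tilde D)\na V_k$ lie in $L^2(Q_T;\R^d)$ because $0\le T_k(\cdot)\le k$. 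I then solve the three decoupled linear parabolic problems
\[
  \eps\pa_t n_k=\diver(\na n_k-T_k(\tilde n)\na V_k),\quad \eps\pa_t p_k=\diver(\na p_k+T_k(\tilde p)\na V_k),\quad \pa_t D_k=\diver(\na D_k+T_k(\tilde D)\na V_k),
\]
with the prescribed initial and mixed boundary data, by a standard Galerkin scheme in $H^1_D(\Omega)$ (for $n_k,p_k$) and in $H^1(\Omega)$ (for $D_k$). This yields unique weak solutions in $L^2(0,T;H^1(\Omega))$; the time derivatives belong respectively to $L^2(0,T;\V')$ and $L^2(0,T;H^1(\Omega)')$, since the fluxes are in $L^2(Q_T)$. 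Define $\Phi(\tilde n,\tilde p,\tilde D):=(n_k,p_k,D_k)$.

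\textbf{Step 2: A priori bounds and compactness.} Lifting the Dirichlet data to $\nD,\pD\in W^{1,\infty}(\Omega)$ and testing each parabolic equation against the corresponding lifted solution yields, after Cauchy--Schwarz and absorbing the drift using $|T_k|\le k$ and the $L^2$-bound on $\na V_k$, uniform bounds
\[
  \|n_k\|_{L^\infty(0,T;L^2)}+\|n_k\|_{L^2(0,T;H^1)}\le C(k,T),
\]
and analogously for $p_k,D_k$; here $C(k,T)$ depends on the data and on $\|\tilde n-\tilde p-\tilde D\|_{L^2(Q_T)}$ in a controlled (quadratic) way. Reading the time derivatives off the equations and invoking Aubin--Lions, $\Phi$ maps a sufficiently large closed ball of $L^2(Q_T)^3$ into a relatively compact subset of itself. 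Continuity of $\Phi$ follows from the linearity in $V_k$ (continuous dependence of the Poisson solution on its source) and from the Lipschitz continuity of $T_k$. Schauder's fixed point theorem delivers a weak solution $(n_k,p_k,D_k,V_k)$.

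\textbf{Step 3: Non-negativity, and anticipated obstacles.} For the sign, I test the $n_k$-equation with $(n_k-\nD)^-\in\V$ (which vanishes on $\Gamma_D$ since $\nD>0$); because $T_k(n_k)=0$ on $\{n_k\le 0\}$, the drift integral reduces to a term controlled by the lifting $\nD$, and a Gronwall argument on $\int_\Omega(n_k^-)^2 dx$ forces $n_k\ge 0$. The same test function works for $p_k$; for $D_k$ I use $D_k^-\in H^1(\Omega)$ directly, since $D_k$ has only Neumann boundary data. The main obstacle I expect is obtaining the \emph{continuity} of $\Phi$ in $L^2(Q_T)^3$: one must show that if $(\tilde n^{(m)},\tilde p^{(m)},\tilde D^{(m)})\to(\tilde n,\tilde p,\tilde D)$ in $L^2(Q_T)$, then the drift coefficients $T_k(\tilde n^{(m)})\na V_k^{(m)}$ converge at least weakly in $L^2(Q_T)$, which requires strong $L^2$-convergence of $\na V_k^{(m)}$. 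This follows from the linearity of the Poisson problem and the elliptic estimate $\|\na(V_k^{(m)}-V_k)\|_{L^2(\Omega)}\le C\|(\tilde n^{(m)}-\tilde n)-(\tilde p^{(m)}-\tilde p)-(\tilde D^{(m)}-\tilde D)\|_{\V'}$; combined with the Lipschitz property of $T_k$ this yields strong convergence of the linear parabolic solutions and thus continuity of $\Phi$. Everything else is standard once the truncation has turned the cubic nonlinearity into a bounded drift.
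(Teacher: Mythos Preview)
Your overall strategy---freezing the densities, solving Poisson then three linear parabolic problems, and closing by a fixed-point argument---matches the paper's. However, there is a genuine gap in Step~2.

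\textbf{The self-map fails.} You correctly observe that the bound $C(k,T)$ on $\|n_k\|_{L^\infty L^2}+\|n_k\|_{L^2 H^1}$ depends on $\|\tilde n-\tilde p-\tilde D\|_{L^2(Q_T)}$. Tracing the estimates, $\|\na V_k\|_{L^2(Q_T)}\le C(1+\|(\tilde n,\tilde p,\tilde D)\|_{L^2(Q_T)})$, and the parabolic energy estimate then gives $\|\Phi(\tilde n,\tilde p,\tilde D)\|_{L^2(Q_T)}\le a+b\,\|(\tilde n,\tilde p,\tilde D)\|_{L^2(Q_T)}$ with a slope $b$ proportional to $k/\lambda^2$ (times Poincar\'e-type constants). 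Unless $b<1$, no ball is mapped into itself, so the plain Schauder theorem does not apply. This is precisely why the paper uses the Leray--Schauder theorem: one introduces a homotopy parameter $\sigma\in[0,1]$ in the drift and the data, and then only needs $\sigma$-uniform bounds on \emph{fixed points} of $F(\cdot;\sigma)$. At a fixed point one has $(\tilde n,\tilde p,\tilde D)=(n,p,D)$, so the estimates combine into a single Gronwall inequality for $\eps\|n\|_{L^2}^2+\eps\|p\|_{L^2}^2+\|D\|_{L^2}^2$ that closes with a constant independent of the solution itself. Your compactness and continuity arguments are fine and carry over verbatim to the Leray--Schauder setting.

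\textbf{A smaller issue in Step~3.} Testing with $(n_k-\nD)^-$ does not give nonnegativity: on its support one only has $n_k<\nD$, so $T_k(n_k)$ need not vanish, and the resulting Gronwall inequality merely bounds $(n_k-\nD)^-$ rather than forcing it to zero. The correct test function (used in the paper) is $(n_k)_-=\min\{0,n_k\}$: since $n_k=\nD>0$ on $\Gamma_D$, this lies in $H^1_D(\Omega)$, and now $T_k(n_k)=0$ on $\{n_k\le 0\}$ kills the drift term entirely, so $\|(n_k)_-(t)\|_{L^2}^2\le \|(n^I)_-\|_{L^2}^2=0$.
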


As a consequence of the lemma, $T_k(n_k)=\min\{k,n_k\}$ and similarly for $p_k$
and $D_k$.

\begin{proof}
The existence of weak solutions can be proved in a standard way by the
Leray--Schauder fixed-point theorem. Therefore, we only sketch the proof.
Let $(n^*,p^*,D^*)\in L^2(Q_T;\R^3)$ and $\sigma\in[0,1]$. 
The linear system
\begin{align*}
  \eps\pa_t n &= \diver(\na n-\sigma T_k(n^*)\na V), \\
	\eps\pa_t p &= \diver(\na p+\sigma T_k(p^*)\na V), \\
	\pa_t D &= \diver(\na D+\sigma T_k(D^*)\na V), \\
	\lambda^2\Delta V &= n-p-D + A\quad\mbox{in }\Omega,\ t>0,
\end{align*}
together with initial and boundary conditions \eqref{1.nbc}--\eqref{1.nbcd} and
\begin{align*}
  & n(\cdot,0) = \sigma n^I, \quad p(\cdot,0) = \sigma p^I, \quad 
	D(\cdot,0) = \sigma D^I	\quad\mbox{in }\Omega, \\
	& n = \sigma \nD , \quad p = \sigma \pD , \quad V = \sigma \VD  
	\quad\mbox{on }\Gamma_D,\ t>0,
\end{align*}
possesses a unique solution $(n,p,D,V)\in L^2(Q_T;\R^4)$. This defines the
fixed-point operator $F:L^2(Q_T;\R^3)\times[0,1]\to L^2(Q_T;\R^3)$,
$(n^*,p^*,D^*;\sigma)\mapsto(n,p,D)$. It holds that $F(n,p,D;0)=0$, and $F$ is
continuous. Both the compactness of $F$ and a $\sigma$-uniform bound
on the set of fixed points $F(n,p,D;\sigma)=(n,p,D)$ follow from
energy-type estimates and the Aubin--Lions lemma. Indeed, let $(n,p,D;\sigma)$
be a fixed point of $F(\cdot;\sigma)$, i.e.\ a solution to \eqref{3.nk}--\eqref{3.nbcd}.
We use the test function $V-\VD $ in the weak formulation of \eqref{3.Vk} 
and apply Young's and Poincar\'e's inequalities to find that for any $\delta>0$,
\begin{align*}
  \lambda^2\int_\Omega|\na(V-\VD)|^2 dx
	&= -\lambda^2\int_\Omega\na\VD\cdot\na(V-\VD) dx
	- \int_\Omega(n-p-D+A)(V-\VD)dx \\
	&\le \frac{\lambda^2}{2}\int_\Omega|\na (V-\VD)|^2 dx
	+ \frac{\lambda^2}{2}\int_\Omega|\na \VD |^2 dx
	+ \delta C\int_\Omega|\na(V-\VD )|^2 dx \\
	&\phantom{xx}{}+ C(\delta)\int_\Omega(n-p-D+A)^2 dx,
\end{align*}
and choosing $\delta>0$ sufficiently small and integrating over $(0,T)$ gives
$$
  \int_0^T\int_\Omega|\na V|^2 dxds
  \le C + C\int_0^{T}\int_\Omega(n^2+p^2+D^2)dxds,
$$
where $C>0$ denotes here and in the following a generic constant independent
of $\eps$ with values changing from line to line.

Next, we use the test function $n-\nD $ in the weak formulation of \eqref{3.nk} and
use $T_k(n)\le k$:
\begin{align*}
  \frac{\eps}{2}\int_\Omega&(n(t)-\nD )^2 dx - \frac{\eps}{2}\int_\Omega(n^I-\nD )^2dx
	+ \int_0^t\int_\Omega \na n\cdot\na(n-\nD) dxds \\
	&= \sigma\int_0^t\int_\Omega T_k(n)\na V\cdot\na(n-\nD )dxds \\
	&\le \delta\int_0^t\int_\Omega|\na(n-\nD )|^2 dxds
	+ C(\delta)k^2\int_0^t\int_\Omega|\na V|^2 dxds.
\end{align*}
We deduce from the estimate for $\na V$ and some elementary manipulations that
$$
  \eps\int_\Omega n(t)^2 dx + \int_0^t\int_\Omega|\na n|^2 dxds
	\le C(k) + C(k)\int_0^t\int_\Omega(n^2+p^2+D^2)dxds.
$$
Using $p-\pD $ and $D$ as test functions in the weak formulations of
\eqref{3.pk} and \eqref{3.Ndk}, respectively, and estimating as above, 
we conclude that
\begin{align*}
  \int_\Omega&\big(\eps n(t)^2 + \eps p(t)^2 + D(t)^2\big) dx 
	+ \int_0^t\int_\Omega\big(|\na n|^2 + |\na p|^2 + |\na D|^2\big)dxds \\
	&\le C(k) + C(k)\int_0^t\int_\Omega(n^2+p^2+D^2)dxds. 
\end{align*}
Gronwall's lemma yields $\sigma$-uniform bounds for $n$, $p$, $D$, $V$ in
$L^2(0,T;H^1(\Omega))$. From these estimates, we can derive uniform bounds for
$\pa_t n$, $\pa_t p$ in $L^2(0,T;H_D^1(\Omega)')$ 
and for $\pa_t D$ in $L^2(0,T;H^1(\Omega)')$. 
These estimates are sufficient to apply the
Aubin--Lions lemma, which yields the compactness of the fixed-point operator
in $L^2(Q_T;\R^3)$ and allows us to apply the Leray--Schauder fixed-point theorem.

The nonnegativity of the densities follows directly after using $(n_k)_-=\min\{0,n_k\}$
as a test function in the weak formulation of \eqref{3.nk}, since $T_k(n_k)(n_k)_-=0$. 
The nonnegativity of $p_k$ and $D_k$ follows similarly. This finishes the proof.
\end{proof}

\subsection{Uniform estimates}

We wish to derive some $k$-uniform bounds using the free energy \eqref{1.H}. 
As the densities are only nonnegative, we cannot use $\log n_k$ etc.\ 
as a test function, and we need to regularize \eqref{1.H}.
For this, we introduce the function
$$
  G_{k,\delta}(s,\overline{s}) = g_{k,\delta}(s) - g_{k,\delta}(\overline{s}) 
	- g_{k,\delta}'(\overline{s})(s-\overline{s}), \quad\mbox{where }
	g_{k,\delta}(s) = \int_0^s\int_1^y \frac{dzdy}{T_k(z)+\delta},
$$
$s$, $\overline{s}\ge 0$, $k\ge 1$, $\delta >0$, and the regularized free energy
$$
  H_{k,\delta}[n,p,D,V] = \int_\Omega\bigg(G_{k,\delta}(n,\nD )
	+ G_{k,\delta}(p,\pD ) + G_{k,\delta}(D,\DD )
	+ \frac{\lambda^2}{2}|\na(V-\VD )|^2\bigg)dx,
$$
where $\DD$ is uniquely defined by $g'_{k,\delta}(\DD)=-\VD$.
The number $\DD$ depends on $k$ and $\delta$, but a computation shows that $\DD$
can be uniformly bounded with respect to $\delta$.
The function $g_{k,\delta}$ is constructed in such a way that the chain rule
$(T_k(n_k)+\delta)\na g_{k,\delta}'(n_k)=\na n_k$ is fulfilled.
An elementary computation shows that there exists $c>0$, not depending on $k$
and $\delta$, such that
$g_{k,\delta}(s)\ge c(s-1)$ for all $s\ge 0$. This implies that
\begin{equation}\label{3.Hk}
  H_{k,\delta}[n,p,D,V] \ge -C + C\int_\Omega(n+p+D)dx.
\end{equation}
For the next lemma, we define
\begin{align*}
  \Lambda_{\eps,k,\delta} 
	&= \frac{1}{2\eps}\big(\|\na(g_{k,\delta}'(\nD )-\VD)\|_{L^\infty(Q_T)}^2
	+ \|\na(g_{k,\delta}'(\pD )+\VD )\|_{L^\infty(Q_T)}^2\big), \\
	h_{k,\delta}(s) &= \int_0^s\frac{dy}{\sqrt{T_k(y)+\delta}}, \quad s\in\R, \\
	H_{k,\delta}^I &= H_{k,\delta}[n^I,p^I,D^I,V^I].
\end{align*}
The function $h_{k,\delta}$ satisfies the chain rule
$\sqrt{T_k(n_k)+\delta}\na h_{k,\delta}(n_k)=\na n_k$.

\begin{lemma}[Regularized free energy inequality I]
Let $(n_k,p_k,D_k,V_k)$ be a weak solution to the approximate problem
\eqref{3.nk}--\eqref{3.nbcd}. Then there exists a constant 
$C(H_{k,\delta}^I,\Lambda_{\eps,k,\delta},T,\delta)>0$ such that for all $0<t<T$,
\begin{align}\label{3.eik1}
  &H_{k,\delta}[(n_k,p_k,D_k,V_k)(t)]
	+ \frac{1}{2\eps}\int_0^t\int_\Omega
	\big|\na h_{k,\delta}(n_k)-\sqrt{T_k(n_k)+\delta}\na V_k\big|^2 dxds \\
	&\phantom{xx}{}
	+ \frac{1}{2\eps}\int_0^t\int_\Omega 
	\big|\na h_{k,\delta}(p_k)+\sqrt{T_k(p_k)+\delta}\na V_k\big|^2dxds \nonumber \\
	&\phantom{xx}{}
	+ \frac{1}{2} \int_0^t\int_\Omega\big|\na h_{k,\delta}(D_k)
	+\sqrt{T_k(D_k)+\delta}\na V_k\big|^2	dxds 
  \le H_{k,\delta}^I + C(H_{k,\delta}^I,\Lambda_{\eps,k,\delta},T,\delta), \nonumber
\end{align}
and the constant $C(H_{k,\delta}^I,\Lambda_{\eps,k,\delta},T,\delta)$ vanishes if 
$\Lambda_{\eps,k,\delta}=0$ and $\delta=0$.
\end{lemma}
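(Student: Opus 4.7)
The plan is to test each of the three evolution equations against a suitable combination of the regularized chemical potential and the shifted electric potential, so that after summation and an appeal to the Poisson equation \eqref{3.Vk} the coupling terms cancel and the spatial dissipation assembles into the sum of squares on the left-hand side of \eqref{3.eik1}. Concretely, I would multiply \eqref{3.nk} by $\eps^{-1}\phi_n$ with $\phi_n:=g_{k,\delta}'(n_k)-g_{k,\delta}'(\nD)-(V_k-\VD)$; this function lies in $H_D^1(\Omega)$ because both $n_k-\nD$ and $V_k-\VD$ vanish on $\Gamma_D$. Symmetrically, I would test \eqref{3.pk} with $\eps^{-1}\phi_p$, where $\phi_p:=g_{k,\delta}'(p_k)-g_{k,\delta}'(\pD)+(V_k-\VD)$, and \eqref{3.Ndk} with $\phi_D:=g_{k,\delta}'(D_k)-g_{k,\delta}'(\DD)+(V_k-\VD)$, for which no Dirichlet constraint is needed since the no-flux boundary condition on $D_k$ annihilates the boundary contribution. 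The weak chain rule applicable to the Lipschitz function $g_{k,\delta}'$ (note $|g''_{k,\delta}|\le 1/\delta$) turns the three time-derivative terms into $(d/dt)\int_\Omega(G_{k,\delta}(n_k,\nD)+G_{k,\delta}(p_k,\pD)+G_{k,\delta}(D_k,\DD))dx$ plus the stray pairing $-\langle\partial_t n_k-\partial_t p_k-\partial_t D_k,V_k-\VD\rangle$. Differentiating \eqref{3.Vk} in time and integrating by parts (legitimate because $V_k-\VD\in H_D^1(\Omega)$ and $\na\partial_t V_k\cdot\nu=0$ on $\Gamma_N$) identifies this last pairing with $(\lambda^2/2)(d/dt)\int_\Omega|\na(V_k-\VD)|^2 dx$, thereby completing $(d/dt)H_{k,\delta}$.

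For the spatial terms I would exploit the decomposition $\na n_k-T_k(n_k)\na V_k=(T_k(n_k)+\delta)(\na g_{k,\delta}'(n_k)-\na V_k)+\delta\na V_k$ together with the identity $\sqrt{T_k(n_k)+\delta}\,\na h_{k,\delta}(n_k)=\na n_k$, which yields
\begin{equation*}
\na\phi_n\cdot(\na n_k-T_k(n_k)\na V_k) = \bigl|\na h_{k,\delta}(n_k)-\sqrt{T_k(n_k)+\delta}\,\na V_k\bigr|^2 + R_n.
\end{equation*}
Here the remainder $R_n$ collects the cross terms with the fixed vector $\Phi_n:=\na g_{k,\delta}'(\nD)-\na\VD$ (whose $L^\infty$ norm is absorbed into $\Lambda_{\eps,k,\delta}$) together with the $\delta\na V_k$ residual. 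Identical identities hold for $p_k$ and $D_k$; in particular, the defining relation $g_{k,\delta}'(\DD)=-\VD$ forces $\Phi_D\equiv 0$, which explains the absence of a $D$-boundary contribution in $\Lambda_{\eps,k,\delta}$. A Young inequality with splitting factor $\tfrac12$ then absorbs half of each squared dissipation back into the left-hand side, leaving remainders bounded by $C\Lambda_{\eps,k,\delta}\int_\Omega(T_k(n_k)+T_k(p_k)+\delta)dx + C\delta\int_\Omega|\na V_k|^2 dx$.

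The main obstacle will be controlling these remainders uniformly in $t$. The boundary-data pieces I would dominate by $C\Lambda_{\eps,k,\delta}(1+H_{k,\delta})$ using the coercivity estimate \eqref{3.Hk}; the $\delta$-residual I would control by testing \eqref{3.Vk} with $V_k-\VD$ to obtain an $L^2$ bound for $\na V_k$ in terms of $\|n_k-p_k-D_k+A\|_{L^2(\Omega)}$, which is finite for fixed $\delta>0$ (this is the origin of the $\delta$-dependence in the final constant). Integrating over $(0,t)$ and invoking Gronwall's lemma to absorb the terms linear in $H_{k,\delta}$ on the right then yields \eqref{3.eik1}. In the limiting regime $\Lambda_{\eps,k,\delta}=\delta=0$, every remainder vanishes identically, the inequality reduces to a pure energy dissipation statement, and the constant $C$ can be taken to be zero, confirming the last assertion of the lemma.
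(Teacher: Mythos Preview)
Your approach is correct and essentially identical to the paper's: the same test functions (up to how you package the $V_k-\VD$ contribution), the same flux decomposition, the same Young splitting, and the same Gronwall closure. The one place where your write-up diverges slightly is the handling of the residual $\delta\int_\Omega|\na V_k|^2\,dx$. Rather than invoking an external $L^2$ bound on $n_k-p_k-D_k$ (which would introduce a $k$-dependence not reflected in the stated constant $C(H_{k,\delta}^I,\Lambda_{\eps,k,\delta},T,\delta)$), the paper simply observes that $|\na V_k|^2\le 2|\na(V_k-\VD)|^2+2|\na\VD|^2$, so this term is bounded by $C\delta(1+H_{k,\delta})$ and can be fed directly into the Gronwall argument alongside the $\Lambda_{\eps,k,\delta}$ terms; this is both cleaner and explains precisely why the final constant depends on $\delta$ only through the Gronwall exponent $e^{C(\Lambda_{\eps,k,\delta}+\delta)T}$.
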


\begin{proof}
We choose the test functions $g_{k,\delta}'(n_k)-g_{k,\delta}'(\nD )$, 
$g_{k,\delta}'(p_k)-g_{k,\delta}'(\pD )$, and
$g_{k,\delta}'(D_k)-g_{k,\delta}'(\DD )$ in the weak formulations of 
\eqref{3.nk}, \eqref{3.pk}, and \eqref{3.Ndk}, respectively, add the equations, 
and use the Poisson equation \eqref{3.Vk}:
\begin{align*}
   H_{k,\delta}[&(n_k,p_k,D_k,V_k)(t)] - H_{k,\delta}^I
	= \int_0^t\langle\pa_t n_k,g_{k,\delta}'(n_k)-g_{k,\delta}'(\nD )\rangle ds \\ 
	&\phantom{xx}{}+ \int_0^t\langle\pa_t p_k,g_{k,\delta}'(p_k)
	-g_{k,\delta}'(\pD )\rangle ds 
	+ \int_0^t\langle\pa_t D_k,g_{k,\delta}'(D_k)-g_{k,\delta}'(\DD )\rangle ds \\
	&\phantom{xx}{}- \int_0^t\langle\pa_t(n_k-p_k-D_k),V_k-\VD \rangle ds \\
	&= -\frac{1}{\eps}\int_0^t\int_\Omega 
	\na(g_{k,\delta}'(n_k)-g_{k,\delta}'(\nD )
	-V_k+\VD )\cdot(\na n_k-T_k(n_k)\na V_k)dxds \\
	&\phantom{xx}{}- \frac{1}{\eps}\int_0^t\int_\Omega
	\na(g_{k,\delta}'(p_k)-g_{k,\delta}'(\pD )+V_k-\VD )
	\cdot(\na p_k+T_k(p_k)\na V_k)dxds \\
	&\phantom{xx}{}- \int_0^t\int_\Omega \na(g_{k,\delta}'(D_k)+V_k)
	\cdot(\na D_k+T_k(D_k)\na V_k)dxds,
\end{align*}
where $\langle\cdot,\cdot\rangle$ is the duality product between
$H_D^1(\Omega)'$ and $H_D^1(\Omega)$ or between $H^1(\Omega)'$ and 
$H^1(\Omega)$, depending on the context. Since
\begin{align*}
  \na n_k - T_k(n_k)\na V_k &= \sqrt{T_k(n_k)+\delta}
	\big(\na h_{k,\delta}(n_k) - \sqrt{T_k(n_k)+\delta}\na V_k\big) + \delta\na V_k, \\
	\na(g_{k,\delta}'(n_k)-V_k) 
	&= \frac{\na h_{k,\delta}(n_k)
	- \sqrt{T_k(n_k)+\delta}\na V_k}{\sqrt{T_k(n_k)+\delta}},
\end{align*}
we obtain
\begin{align*}
  \na&(g_{k,\delta}'(n_k)-g_{k,\delta}'(\nD )
	-V_k+\VD )\cdot(\na n_k-T_k(n_k)\na V_k) \\
	&= \big|\na h_{k,\delta}(n_k)-\sqrt{T_k(n_k)+\delta}\na V_k\big|^2
	+ \frac{\delta\na V_k}{\sqrt{T_k(n_k)+\delta}}
	\cdot\big(\na h_{k,\delta}(n_k)-\sqrt{T_k(n_k)+\delta}\na V_k\big) \\
	&\phantom{xx}{}- \sqrt{T_k(n_k)+\delta}\na(g_{k,\delta}'(\nD)-\VD)\cdot
	\big(\na h_{k,\delta}(n_k)-\sqrt{T_k(n_k)+\delta}\na V_k\big) \\
	&\phantom{xx}{}- \delta\na(g_{k,\delta}'(\nD)-\VD)\cdot\na V_k \\
	&\ge \frac12\big|\na h_{k,\delta}(n_k)-\sqrt{T_k(n_k)+\delta}\na V_k\big|^2
	- 2(T_k(n_k)+\delta)|\na(g_{k,\delta}'(\nD)-\VD)|^2
	- 2\delta|\na V_k|^2.
\end{align*}
The terms involving $p_k$ and $D_k$ are estimated in a similar way. We infer that
\begin{align}\label{3.aux2}
  H_{k,\delta}[&(n_k,p_k,D_k,V_k)(t)] 
	+ \frac{1}{2\eps}\int_0^t\int_\Omega\big|\na h_{k,\delta}(n_k)
	-\sqrt{T_k(n_k)+\delta}\na V_k\big|^2 dxds \\
	&\phantom{xx}{}+ \frac{1}{2\eps}\int_0^t\int_\Omega\big|\na h_{k,\delta}(p_k)
	+\sqrt{T_k(p_k)+\delta}\na V_k\big|^2 dxds \nonumber \\
	&\phantom{xx}{}+ \frac{1}{2}\int_0^t\int_\Omega\big|\na h_{k,\delta}(D_k)
	+\sqrt{T_k(D_k)+\delta}\na V_k\big|^2 dxds \nonumber \\
	&\le H_{k,\delta}^I + C\Lambda_{\eps,k,\delta}\int_0^t\int_\Omega
	(T_k(n_k)+T_k(p_k)+\delta)dxds 
	+ \delta C\int_0^t\int_\Omega|\na V_k|^2 dxds \nonumber \\
	&\le H_{k,\delta}^I + C\Lambda_{\eps,k,\delta}
	+ C(\Lambda_{\eps,k,\delta}+\delta)\int_0^t H_{k,\delta}
	[(n_k,p_k,D_k,V_k)(s)]ds, \nonumber
\end{align}
using bound \eqref{3.Hk} for $H_{k,\delta}$ and inequality $T_k(s)\le s$ for $s\ge 0$. 
Then, by Gronwall's lemma,
$$
  \sup_{0<t<T}H_{k,\delta}[(n_k,p_k,D_k,V_k)(t)] 
	\le \big(H_{k,\delta}^I + C\Lambda_{\eps,k,\delta}\big) 
	e^{C(\Lambda_{\eps,k,\delta}+\delta)T}.
$$
Using this information in \eqref{3.aux2} then yields \eqref{3.eik1},
and $C(H_{k,\delta}^I,\Lambda_{\eps,k,\delta},T,\delta)=0$ if 
$\Lambda_{\eps,k,\delta}=0$ and $\delta=0$.  
\end{proof}

The next step is the limit $\delta\to 0$ in \eqref{3.eik1}. To this end,
we define
\begin{align}
  \Lambda_{\eps,k} 
	&= \frac{1}{2\eps}\big(\|\na(g_{k}'(\nD )-\VD)\|_{L^\infty(Q_T)}^2
	+ \|\na(g_{k}'(\pD )+\VD )\|_{L^\infty(Q_T)}^2\big), \nonumber \\
  H_k[n,p,D,V] &= \int_\Omega\bigg(G_k(n,\nD)+G_k(p,\pD)+G_k(D,\DD)
  + \frac{\lambda^2}{2}|\na(V-\VD)|^2\bigg)dx, \nonumber \\
  G_k(s,\overline{s}) &= g_k(s)-g_k(\overline{s}) 
  - g_k'(\overline{s})(s-\overline{s}), \nonumber \\
	g_k(s) &= \int_0^s\int_1^y\frac{dz}{T_k(z)}dy, \quad
	h_{k}(s) = \int_0^s\frac{dy}{\sqrt{T_k(y)}}, \quad s\ge 0, \nonumber \\
	H_{k}^I &= H_{k}[n^I,p^I,D^I,V^I]. \nonumber
\end{align}

\begin{lemma}[Regularized free energy inequality II]
Let $(n_k,p_k,D_k,V_k)$ be a weak solution to the approximate problem
\eqref{3.nk}--\eqref{3.nbcd}. Then there exists a constant 
$C(H_{k}^I,\Lambda_{\eps,k},T)>0$ such that for all $0<t<T$,
\begin{align}\label{3.eik}
  &H_{k}[n_k(t),p_k(t),D_k(t),V_k(t)]
	+ \frac{1}{2\eps}\int_0^t\int_\Omega
	|\na h_{k}(n_k)-\sqrt{T_k(n_k)}\na V_k)|^2 dxds \\
	&\phantom{xx}{}
	+ \frac{1}{2\eps}\int_0^t\int_\Omega 
	|\na h_{k}(p_k)+\sqrt{T_k(p_k)}\na V_k)|^2dxds \nonumber \\
	&\phantom{xx}{}
	+ \frac{1}{2}\int_0^t\int_\Omega|\na h_{k}(D_k)+\sqrt{T_k(D_k)}\na V_k)|^2 dxds  
	\le H_k^I + C(H_k^I,\Lambda_{\eps,k},T), \nonumber
\end{align}
and the constant $C(H_k^I,\Lambda_{\eps,k},T)$ vanishes if $\Lambda_{\eps,k}=0$.
\end{lemma}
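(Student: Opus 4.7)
The strategy is to pass to the limit $\delta \to 0$ in inequality \eqref{3.eik1}, which holds for every $\delta > 0$. I would handle the free-energy and initial-data terms by monotone convergence, the three dissipation integrals by Fatou's lemma, and the Gronwall-type constant on the right-hand side by continuity in the parameters.

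For the free energy, I would first record the Bregman-type identity
\[
G_{k,\delta}(s, \overline{s}) = \int_{\overline{s}}^{s} \frac{s-y}{T_k(y)+\delta}\, dy,
\]
from which $G_{k,\delta}(s, \overline{s}) \ge 0$ and $G_{k,\delta}(s, \overline{s})$ is monotone increasing as $\delta \downarrow 0$ to $G_k(s, \overline{s})$ for every $s \ge 0$, $\overline{s} > 0$. Beppo Levi then gives both $H_{k,\delta}[(n_k,p_k,D_k,V_k)(t)] \nearrow H_k[(n_k,p_k,D_k,V_k)(t)]$ and $H_{k,\delta}^I \to H_k^I$; the electric-energy contribution is $\delta$-independent and requires no argument.

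For the dissipation integrals, I would rewrite
\[
\bigl|\na h_{k,\delta}(n_k) - \sqrt{T_k(n_k)+\delta}\,\na V_k\bigr|^2 = \frac{|\na n_k - (T_k(n_k)+\delta)\na V_k|^2}{T_k(n_k)+\delta}
\]
and check pointwise a.e.\ convergence as $\delta \to 0$: on $\{n_k > 0\}$ the limit equals $|\na n_k - T_k(n_k)\na V_k|^2 / T_k(n_k) = |\na h_k(n_k) - \sqrt{T_k(n_k)}\,\na V_k|^2$, while on $\{n_k = 0\}$ one has $\na n_k = 0$ a.e.\ (for nonnegative Sobolev functions) so the integrand reduces to $\delta|\na V_k|^2$ and vanishes in the limit, using the $L^2$ bound on $\na V_k$ from Lemma \ref{lem.exk}. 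Since the integrand is not monotone in $\delta$, I would invoke Fatou's lemma (with the convention $0/0 := 0$) to obtain the required one-sided inequality for the $n_k$-term, and similarly for $p_k$ and $D_k$.

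To pass to the limit on the right-hand side of \eqref{3.eik1}, I would use that $\na g_{k,\delta}'(\nD) = \na\nD/(T_k(\nD)+\delta) \to \na\nD/T_k(\nD) = \na g_k'(\nD)$ in $L^\infty(\Omega)$, exploiting the strict positivity of $\nD$ from (A3) (for $k$ sufficiently large that $T_k(\nD) = \nD$ on the relevant range), and similarly for $\pD$; hence $\Lambda_{\eps,k,\delta} \to \Lambda_{\eps,k}$. The explicit Gronwall constant extracted at the end of the proof of \eqref{3.eik1}, of the form $(H_{k,\delta}^I + C\Lambda_{\eps,k,\delta})\exp(C(\Lambda_{\eps,k,\delta}+\delta)T)$, is jointly continuous in its parameters and therefore converges to $(H_k^I + C\Lambda_{\eps,k})\exp(C\Lambda_{\eps,k}T)$. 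Combining Fatou on the left-hand side with these limits on the right gives \eqref{3.eik}, and the equilibrium statement ($\Lambda_{\eps,k} = 0$ implies a vanishing constant) is inherited directly from \eqref{3.eik1}. The main technical hurdle is the non-monotonicity of the dissipation integrand in $\delta$, forcing the use of Fatou's lemma and requiring one to carefully identify the pointwise limit on both $\{n_k > 0\}$ and $\{n_k = 0\}$; once this is done, the remaining limits are either monotone or continuous.
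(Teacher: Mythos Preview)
Your approach is correct and in fact more elementary than the paper's. The paper passes to the limit $\delta\to 0$ in the dissipation terms by first establishing \emph{weak} $L^2(Q_T)$ convergence of $\na h_{k,\delta}(n_k)-\sqrt{T_k(n_k)+\delta}\,\na V_k$: boundedness gives a weakly convergent subsequence, and the limit is identified via the strong $L^q$ convergence of $h_{k,\delta}(n_k)\to h_k(n_k)$ (dominated convergence, using $h_k\le C(k)$) together with $\sqrt{T_k(n_k)+\delta}\to\sqrt{T_k(n_k)}$; the inequality then follows from weak lower semicontinuity of the $L^2$ norm. Your route via pointwise convergence of the integrand and Fatou's lemma bypasses the weak-compactness step entirely and is arguably cleaner for this lemma. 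What the paper's argument buys is that it simultaneously shows $\na h_k(n_k)-\sqrt{T_k(n_k)}\na V_k\in L^2(Q_T)$ as a weak limit, without having to interpret $\na h_k(n_k)$ a priori; in your approach that regularity comes out a posteriori from the Fatou bound combined with $\sqrt{T_k(n_k)}\,\na V_k\in L^2$.

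One small point you should tighten: your monotone-convergence argument for the free-energy term treats $\overline{s}$ as fixed, which is correct for the $n_k$ and $p_k$ contributions (where $\nD,\pD$ are $\delta$-independent), but the reference value $\DD$ in $G_{k,\delta}(D_k,\DD)$ is defined through $g_{k,\delta}'(\DD)=-\VD$ and hence varies with $\delta$. Since $\DD(\delta)\to e^{-\VD}$ stays bounded and bounded away from zero, this is easily handled (e.g.\ split $G_{k,\delta}(D_k,\DD(\delta))$ into $G_{k,\delta}(D_k,\DD_0)$ plus a remainder controlled by $|\DD(\delta)-\DD_0|$, or simply invoke Fatou after checking pointwise convergence), but it is not a pure Beppo Levi step.
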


\begin{proof}
The lemma follows after performing the limit $\delta\to 0$ in \eqref{3.eik1}.
We claim that $\na h_{k,\delta}(n_k)-\sqrt{T_k(n_k)+\delta}\na V_k\rightharpoonup
\na h_k(n_k)-\sqrt{T_k(n_k)}\na V_k$ weakly in $L^2(Q_T)$ as $\delta\to 0$. 
Indeed, we know that
$$
  \big|\sqrt{T_k(n_k)+\delta}-\sqrt{T_k(n_k)}\big|
	= \frac{\delta}{|\sqrt{T_k(n_k)+\delta}+\sqrt{T_k(n_k)}|} 
	\le \sqrt{\delta}\to 0
$$
and, by monotone convergence, $h_{k,\delta}(n_k)\to h_k(n_k)$ a.e.\ in $Q_T$. 
Since $h_k(s)\le C(k)$ for $s\ge 0$, we deduce from
dominated convergence that $h_{k,\delta}(n_k)\to h_k(n_k)$ strongly in
$L^q(Q_T)$ for any $q<\infty$. Finally, 
$\na h_{k,\delta}(n_k)-\sqrt{T_k(n_k)+\delta}\na V_k$ is bounded in $L^2(Q_T)$ 
uniformly in $\delta$, and there exists a subsequence that converges weakly
in $L^2(Q_T)$. The previous arguments show that we can identify the
weak limit, showing the claim. The other terms in \eqref{3.eik1} can be
treated in a similar way. The limit $\delta\to 0$ proves \eqref{3.eik}.
\end{proof}

The free energy inequality \eqref{3.eik} implies some uniform bounds, which are
collected in the following lemma.

\begin{lemma}[Global estimates for the approximate problem]\label{lem.estapprox}
Let $(n_k,p_k,D_k,V_k)$ be a weak solution to the approximate problem
\eqref{3.nk}--\eqref{3.nbcd}. Then there exists a constant $C>0$, which is
independent of $k$ and $\eps$, such that
\begin{align}
  \|g_k(n_k)\|_{L^\infty(0,T;L^1(\Omega))}
	+ \|g_k(p_k)\|_{L^\infty(0,T;L^1(\Omega))}
	+ \|g_k(D_k)\|_{L^\infty(0,T;L^1(\Omega))} &\le C, \label{3.gk} \\
  \|n_k\log n_k\|_{L^\infty(0,T;L^1(\Omega))}
	+ \|p_k\log p_k\|_{L^\infty(0,T;L^1(\Omega))}
	+ \|D_k\log D_k\|_{L^\infty(0,T;L^1(\Omega))} &\le C, \label{3.nlogn} \\
  \big\|\sqrt{T_k(n_k)}\na V_k\big\|_{L^\infty(0,T;L^1(\Omega))} 
	+ \big\|\sqrt{T_k(p_k)}\na V_k\big\|_{L^\infty(0,T;L^1(\Omega))} 
	& \label{3.sqrtTkVk} \\
	{}+ \big\|\sqrt{T_k(D_k)}\na V_k\big\|_{L^\infty(0,T;L^1(\Omega))}&\le C, 
	\nonumber \\
	\|h_k(n_k)\|_{L^2(0,T;W^{1,1}(\Omega))} 
	+ \|h_k(p_k)\|_{L^2(0,T;W^{1,1}(\Omega))} 
	+ \|h_k(D_k)\|_{L^2(0,T;W^{1,1}(\Omega))} &\le C. \label{3.hk}
\end{align}
\end{lemma}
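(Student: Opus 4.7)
The strategy is to extract the four estimates directly from the regularized free energy inequality \eqref{3.eik}, combined with explicit properties of the functions $g_k$ and $h_k$ and a routine Cauchy--Schwarz for the electric-energy term. I would first record the pointwise bounds $g_k(s)\ge s\log s - s\ge -1$ and $h_k(s)\le C(1+s)$, valid for all $s\ge 0$ uniformly in $k$. For $s\in[0,k]$ one computes directly $g_k(s)=s\log s-s$ and $h_k(s)=2\sqrt{s}$, while for $s>k$ one has $g_k'(s)=\log k + (s-k)/k$, and the elementary inequality $u-1\ge\log u$ applied to $u=s/k\ge 1$ yields $g_k'(s)\ge\log s$; integrating from $k$ to $s$ gives $g_k(s)\ge s\log s-s$, and the formula $h_k(s)=2\sqrt{k}+(s-k)/\sqrt{k}$ produces the linear bound on $h_k$.

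Next, I would translate \eqref{3.eik} together with \eqref{3.Hk} (after $\delta\to 0$) into the baseline estimates $H_k(t)\le C$, the $L^\infty(0,T;L^1(\Omega))$ bound on $n_k$, $p_k$, $D_k$, and $\|\na V_k\|_{L^\infty(0,T;L^2(\Omega))}\le C$ coming from the electric-energy term in $H_k$ and the assumption $\na\VD\in L^\infty$. Since $\nD,\pD\in W^{1,\infty}(\Omega)$ are bounded away from zero by (A3) and $\DD$ is $\delta$-uniformly controlled as noted above \eqref{3.Hk}, the quantities $g_k(\nD)$, $g_k'(\nD)$ and their analogues are bounded uniformly in $k$ (for $k$ large one even has $g_k'(\nD)=\log\nD$). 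Expanding $G_k(n_k,\nD)=g_k(n_k)-g_k(\nD)-g_k'(\nD)(n_k-\nD)$ and using $g_k(n_k)\ge -1$ with the $L^1$ bound gives
\begin{equation*}
\int_\Omega |g_k(n_k)|\,dx \le \int_\Omega g_k(n_k)\,dx + |\Omega| \le \int_\Omega G_k(n_k,\nD)\,dx + C\bigl(1+\|n_k\|_{L^1(\Omega)}\bigr)\le C,
\end{equation*}
with the analogous argument for $p_k$ and $D_k$, yielding \eqref{3.gk}. The comparison $g_k(n_k)\ge n_k\log n_k - n_k$ combined with \eqref{3.gk} and the $L^1$ control, absorbing the negative part via $n_k\log n_k\ge -1/e$, gives \eqref{3.nlogn}.

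For \eqref{3.sqrtTkVk}, I would apply Cauchy--Schwarz pointwise in time:
\begin{equation*}
\int_\Omega \sqrt{T_k(n_k)}\,|\na V_k|\,dx \le \|T_k(n_k)\|_{L^1(\Omega)}^{1/2}\,\|\na V_k\|_{L^2(\Omega)} \le C\,\|n_k\|_{L^1(\Omega)}^{1/2},
\end{equation*}
using $T_k(n_k)\le n_k$ and the $L^2$ bound on $\na V_k$, and analogously for $p_k$ and $D_k$. For \eqref{3.hk}, I would split
\begin{equation*}
|\na h_k(n_k)| \le \bigl|\na h_k(n_k) - \sqrt{T_k(n_k)}\,\na V_k\bigr| + \sqrt{T_k(n_k)}\,|\na V_k|.
\end{equation*}
The first summand is in $L^2(Q_T)$ by the entropy-production term in \eqref{3.eik}, hence in $L^2(0,T;L^1(\Omega))$ on the bounded domain; the second is in $L^\infty(0,T;L^1(\Omega))\subset L^2(0,T;L^1(\Omega))$ by \eqref{3.sqrtTkVk}. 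The pointwise bound $h_k(n_k)\le C(1+n_k)$ combined with $\|n_k\|_{L^\infty(0,T;L^1(\Omega))}\le C$ supplies the zero-order part, and \eqref{3.hk} follows.

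The only delicate ingredient is the uniform-in-$k$ comparison $g_k(s)\ge s\log s - s$, which is where the precise truncation structure of $T_k$ enters essentially; once this and the baseline $L^1$ control on the densities are secured, the remaining steps are routine assembly from \eqref{3.eik}.
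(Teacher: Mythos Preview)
Your argument is correct and follows essentially the same route as the paper: both derive \eqref{3.gk}--\eqref{3.hk} directly from the free energy inequality \eqref{3.eik}, the pointwise comparison $g_k(s)\ge s\log s-s$, and a Cauchy--Schwarz splitting of $\na h_k(n_k)$ into the entropy-production term and $\sqrt{T_k(n_k)}\na V_k$. The only cosmetic differences are that the paper obtains $g_k(s)\ge s(\log s-1)$ by the one-line observation $1/T_k(z)\ge 1/z$ rather than by differentiating, and that for the zero-order part of \eqref{3.hk} the paper invokes $h_k(s)^2\le C(1+|g_k(s)|)$ from Lemma~\ref{lem.aux} (giving an $L^\infty(0,T;L^2(\Omega))$ bound), whereas you use the linear bound $h_k(s)\le C(1+s)$ to get $L^\infty(0,T;L^1(\Omega))$; either suffices for the stated $L^2(0,T;W^{1,1}(\Omega))$ conclusion.
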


\begin{proof}
Estimate \eqref{3.gk} is a consequence of the free energy inequality \eqref{3.eik},
and \eqref{3.nlogn} follows from \eqref{3.gk} and 
$$
  g_k(s) \ge \int_0^s\int_1^y\frac{dz}{z}dy = s(\log s-1) \ge \frac12 s\log s
$$
for sufficiently large $s>1$. Lemma \ref{lem.aux} in the Appendix shows that
\begin{equation*}
  \big\|\sqrt{T_k(n_k)}\big\|_{L^\infty(0,T;L^2(\Omega))}
	\le C + C\|g_k(n_k)\|_{L^\infty(0,T;L^1(\Omega))}^{1/2} \le C.
\end{equation*}
Then the $L^\infty(0,T;L^2(\Omega))$ bound for $\na V_k$ from the free energy
inequality \eqref{3.eik} implies that
$$
  \big\|\sqrt{T_k(n_k)}\na V_k\big\|_{L^\infty(0,T;L^1(\Omega))}
	\le \big\|\sqrt{T_k(n_k)}\big\|_{L^\infty(0,T;L^2(\Omega))}
	\|\na V_k\|_{L^\infty(0,T;L^2(\Omega))} \le C,
$$
which proves \eqref{3.sqrtTkVk}.
Next, by the bound on the entropy production from \eqref{3.eik},
\begin{align*}
  \|\na h_k(n_k)\|_{L^2(0,T;L^1(\Omega))} 
	&= \big\|\na h_k(n_k)-\sqrt{T_k(n_k)}\na V_k\big\|_{L^2(0,T;L^1(\Omega))} \\
	&\phantom{xx}{}+ \big\|\sqrt{T_k(n_k)}\na V_k\big\|_{L^2(0,T;L^1(\Omega))} \le C.
\end{align*}
Finally, we deduce from the proof of Lemma \ref{lem.aux} in the Appendix that
\begin{equation}\label{3.esthk}
  \|h_k(n_k)\|_{L^\infty(0,T;L^2(\Omega))} 
	\le C+C\|g_k(n_k)\|_{L^\infty(0,T;L^1(\Omega))}^{1/2}\le C
\end{equation}
such that \eqref{3.hk} follows. Similar bounds hold for $p_k$ and $D_k$.
\end{proof}

The estimates of the previous lemma are not sufficient to show that the
current density
$$
  \na n_k - T_k(n_k)\na V_k = \sqrt{T_k(n_k)}
	\big(\na h_k(n_k)-\sqrt{T_k(n_k)}\na V_k\big)
$$
is uniformly bounded. Therefore, we prove stronger estimates
in $\Omega_\delta:=\{x\in\Omega:\operatorname{dist}(x,\pa\Omega)>\delta\}$,
which allow us to apply the Aubin--Lions lemma.

\begin{lemma}[Local estimates for the approximate problem]
Let $(n_k,p_k,D_k,V_k)$ be a weak solution to the approximate problem
\eqref{3.nk}--\eqref{3.nbcd} and let $r=(2+2d)/(1+2d)$, $r'=2d+2$. 
Then there exists a constant $C(\delta)>0$, 
depending on $\delta$ but not on $k$ or $\eps$, such that
\begin{align}
  \|n_k\|_{L^r(0,T;W^{1,r}(\Omega_\delta))}
	+ \|p_k\|_{L^r(0,T;W^{1,r}(\Omega_\delta))}
	+ \|D_k\|_{L^r(0,T;W^{1,r}(\Omega_\delta))} &\le C(\delta), \label{3.W1r} \\
	\|\pa_t n_k\|_{L^r(0,T;W^{-1,r}(\Omega_\delta))}
	+ \|\pa_t p_k\|_{L^r(0,T;W^{-1,r}(\Omega_\delta))}
	+ \|\pa_t D_k\|_{L^r(0,T;W^{1,r'}(\Omega_\delta)')} &\le C(\delta). \label{3.time}
\end{align}
\end{lemma}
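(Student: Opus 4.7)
The plan is to combine a spatial cutoff with the identity $\nabla n_k = \sqrt{T_k(n_k)}\,\nabla h_k(n_k)$, the bounds of Lemma~\ref{lem.estapprox}, and interior elliptic regularity for the Poisson equation. First I pick a cutoff function $\phi \in C_c^\infty(\Omega)$ with $0\le\phi\le 1$, $\phi\equiv 1$ on $\Omega_\delta$, and support compactly contained in $\Omega$. Multiplication by $\phi$ suppresses boundary terms and lets us work on a subdomain compactly included in $\Omega$.

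The key observation is the factorization $J_{n,k} := \nabla n_k - T_k(n_k)\na V_k = \sqrt{T_k(n_k)}\,E_{n,k}$, where $E_{n,k} := \na h_k(n_k) - \sqrt{T_k(n_k)}\na V_k \in L^2(Q_T)$ by~\eqref{3.eik}. Space--time H\"older's inequality with exponents $\bigl(2/(2-r),\,2/r\bigr)$ gives
\begin{equation*}
  \|J_{n,k}\|_{L^r(Q_T)}^r \le \|T_k(n_k)\|_{L^{r/(2-r)}(Q_T)}^{r/2}\,\|E_{n,k}\|_{L^2(Q_T)}^r,
\end{equation*}
and for $r=(2d+2)/(2d+1)$ one computes $r/(2-r)=(d+1)/d$. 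Hence one needs $T_k(n_k)\in L^{(d+1)/d}(\Omega_\delta\times(0,T))$. This is where the interior gain enters: the Poisson equation~\eqref{3.Vk} has right-hand side bounded in $L^\infty(0,T;L^1(\Omega))$, and in fact in $L\log L$ by~\eqref{3.nlogn}; interior Calder\'on--Zygmund theory (cf.~\cite{Gro94}) produces additional integrability of $\na V_k$ on $\Omega_\delta$. A space--time interpolation of the bound $h_k(n_k)\in L^\infty(L^2)\cap L^2(W^{1,1})$, combined with $T_k(n_k)\le h_k(n_k)^2/4$, then upgrades $T_k(n_k)$ to $L^{(d+1)/d}$ on $\Omega_\delta\times(0,T)$. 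The drift $T_k(n_k)\na V_k=\sqrt{T_k(n_k)}\bigl(\sqrt{T_k(n_k)}\na V_k\bigr)$ is handled analogously, so that $\na n_k=J_{n,k}+T_k(n_k)\na V_k\in L^r(L^r(\Omega_\delta))$, giving~\eqref{3.W1r}. The same argument applies to $p_k$ and $D_k$.

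For~\eqref{3.time}, I test the weak formulation of~\eqref{3.nk} against $\psi\in W^{1,r'}_0(\Omega_\delta)$ (extended by zero to $\Omega$, which automatically vanishes on $\Gamma_D$). H\"older's inequality yields
\begin{equation*}
  \eps\,|\langle\pa_t n_k,\psi\rangle| = \Bigl|\int_{\Omega_\delta} J_{n,k}\cdot\na\psi\,dx\Bigr|
  \le \|J_{n,k}\|_{L^r(\Omega_\delta)}\,\|\na\psi\|_{L^{r'}(\Omega_\delta)},
\end{equation*}
so $\pa_t n_k\in L^r(0,T;W^{-1,r}(\Omega_\delta))$, and analogously for $p_k$. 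For $D_k$ the absence of any Dirichlet datum allows test functions in the larger space $W^{1,r'}(\Omega_\delta)$ (not required to vanish on $\pa\Omega_\delta$), giving the stronger bound in $L^r(0,T;W^{1,r'}(\Omega_\delta)')$.

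The main obstacle is executing the interpolation so that one lands exactly on the exponent $r = (2d+2)/(2d+1) = 1 + 1/(2d+1)$. The weak global bounds $\sqrt{T_k(n_k)}\in L^\infty(L^2)$, $E_{n,k}\in L^2(Q_T)$, and $h_k(n_k)\in L^\infty(L^2)\cap L^2(W^{1,1})$ are, by themselves, insufficient for $d\ge 2$; one genuinely needs the interior gain for $\na V_k$ furnished by Calder\'on--Zygmund theory applied to the $L\log L$ estimate~\eqref{3.nlogn}. Keeping track of the $\delta$-dependent constants through this chain, while preserving uniformity in $k$ and $\eps$, is the delicate bookkeeping that the full proof has to carry out.
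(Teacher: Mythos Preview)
Your proposal has a genuine gap at the interpolation step. From the global bounds $h_k(n_k)\in L^\infty(0,T;L^2(\Omega))\cap L^2(0,T;W^{1,1}(\Omega))$ alone you cannot reach $T_k(n_k)\in L^{(d+1)/d}(\Omega_\delta\times(0,T))$ for $d\ge 2$. Indeed, $W^{1,1}(\Omega)\hookrightarrow L^{d/(d-1)}(\Omega)$ gives at best $h_k(n_k)\in L^2(0,T;L^{d/(d-1)})$, and since $d/(d-1)\le 2$ for $d\ge 2$, interpolation with $L^\infty(L^2)$ never improves the spatial exponent beyond~$2$. But $T_k(n_k)\le Ch_k(n_k)^2$ means you need $h_k(n_k)\in L^{2+2/d}$, which is strictly above~$2$. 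Your appeal to interior Calder\'on--Zygmund for $\nabla V_k$ from the $L\log L$ bound on the right-hand side of the Poisson equation does not fill this hole: better integrability of $\nabla V_k$ by itself does not upgrade $\nabla h_k(n_k)$ from $L^2(L^1)$ to $L^2(L^2)$, nor does it give $\sqrt{T_k(n_k)}\,\nabla V_k\in L^2(L^2(\Omega_\delta))$, both of which you need.

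The paper's mechanism is different and does not use Calder\'on--Zygmund. One expands $|E_{n,k}|^2 = |\nabla h_k(n_k)|^2 + T_k(n_k)|\nabla V_k|^2 - 2\nabla n_k\cdot\nabla V_k$ (and analogously for $p_k,D_k$), multiplies by the cutoff $\xi_\delta^2$, and \emph{sums over the three species}. The cross terms combine into $2\int\nabla(n_k-p_k-D_k)\cdot\nabla V_k\,\xi_\delta^2$, and now the Poisson equation \eqref{3.Vk} is used algebraically via integration by parts: this term equals $-\lambda^{-2}\int(n_k-p_k-D_k)(n_k-p_k-D_k+A)\xi_\delta^2$ plus a harmless commutator with $\nabla\xi_\delta$ controlled by $\|\nabla V_k\|_{L^2(Q_T)}$. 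The leading piece has a \emph{good sign}, so one obtains directly
\[
\|\nabla h_k(n_k)\|_{L^2(\Omega_\delta\times(0,T))} + \bigl\|\sqrt{T_k(n_k)}\,\nabla V_k\bigr\|_{L^2(\Omega_\delta\times(0,T))} \le C(\delta),
\]
and similarly for $p_k,D_k$. With $h_k(n_k)\in L^\infty(L^2)\cap L^2(H^1(\Omega_\delta))$ the Gagliardo--Nirenberg inequality gives $h_k(n_k)\in L^{2+2/d}(\Omega_\delta\times(0,T))$, hence $\sqrt{T_k(n_k)}\in L^{2+2/d}$, and then your H\"older factorization does work with exactly $r=(2d+2)/(2d+1)$. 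The three-species cancellation through the Poisson equation is the idea you are missing.
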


\begin{proof}
We define the cutoff function $\xi_\delta\in C_0^1(\R^d)$ such that
$0\le\xi_\delta\le 1$ in $\R^d$, $\xi_\delta=1$ in $\Omega_\delta$,
$\xi_\delta=0$ in $\Omega\setminus\Omega_{\delta/2}$, and 
$\|\na\xi_\delta\|_{L^\infty(\R^d)}\le C_\xi/\delta$. The bound for the
entropy production in \eqref{3.eik} and the property
$\na h_k(n_k)=T_k(n_k)^{-1/2}\na n_k$ imply that
\begin{align*}
  \int_0^T&\int_\Omega\big(|\na h_k(n_k)|^2 + T_k(n_k)|\na V_k|^2\big)\xi_\delta^2 dxdt
	= \int_0^T\big|\na h_k(n_k)-\sqrt{T_k(n_k)}\na V_k\big|^2\xi_\delta^2 dxdt \\
	&{}+ 2\int_0^T\int_\Omega \na n_k\cdot\na V_k\xi_\delta^2 dxdt
	\le C + 2\int_0^T\int_\Omega \na n_k\cdot\na V_k\xi_\delta^2 dxdt.
\end{align*}
Similar computations for $p_k$ and $D_k$ lead to
\begin{align}\label{3.nah}
  \int_0^T&\int_\Omega\big(|\na h_k(n_k)|^2 + |\na h_k(p_k)|^2 + |\na h_k(D_k)|^2\big)
	\xi_\delta^2 dxdt \\
	&\phantom{xx}{}+ \int_0^T\int_\Omega\big(T_k(n_k)+T_k(p_k)+T_k(D_k)\big)
	|\na V_k|^2\xi_\delta^2 dxdt \nonumber \\
	&\le C + 2\int_0^T\int_\Omega\na(n_k-p_k-D_k)\cdot\na V_k\xi_\delta^2 dxdt.
	\nonumber
\end{align}
By the Poisson equation \eqref{3.Vk} and Young's inequality, we find for the
last integral that
\begin{align*}
  \int_0^T&\int_\Omega\na(n_k-p_k-D_k)\cdot\na V_k\xi_\delta^2 dxdt \\
	&= -\frac{1}{\lambda^2}\int_0^T\int_\Omega(n_k-p_k-D_k)(n_k-p_k-D_k+A)\xi_\delta^2
	dxdt \\
	&\phantom{xx}{}- 2\int_0^T\int_\Omega(n_k-p_k-D_k)\xi_\delta \na V_k\cdot\na\xi_\delta
	dxdt \\
	&\le -\frac{1}{2\lambda^2}\int_0^T\int_\Omega(n_k-p_k-D_k)^2\xi_\delta^2 dxdt
	+ \frac{1}{\lambda^2}\int_0^T\int_\Omega A^2\xi_\delta^2 dxdt \\
	&\phantom{xx}{}+ 4\lambda^2\int_0^T\int_\Omega|\na V_k|^2|\na\xi_\delta|^2 dxdt.
\end{align*}
The free energy inequality \eqref{3.eik} shows that $\na V_k$ is uniformly bounded
in $L^2(Q_T)$. Therefore, using $|\na\xi_\delta|^2\le C_\xi^2\delta^{-2}$,
\eqref{3.nah} becomes
\begin{align}\label{3.nah2}
   \int_0^T&\int_\Omega\big(|\na h_k(n_k)|^2 + |\na h_k(p_k)|^2 + |\na h_k(D_k)|^2\big)
	\xi_\delta^2 dxdt \\
	&\phantom{xx}{}+ \int_0^T\int_\Omega\big(T_k(n_k)+T_k(p_k)+T_k(D_k)\big)
	|\na V_k|^2\xi_\delta^2 dxdt \nonumber \\
	&\phantom{xx}{}+ \frac{1}{2\lambda^2}\int_0^T\int_\Omega(n_k-p_k-D_k)^2
	\xi_\delta^2 dxdt	\le C + C\delta^{-2}. \nonumber 
\end{align}
This leads, together with \eqref{3.esthk}, to the bound
$$
  \|h_k(n_k)\|_{L^2(0,T;H^1(\Omega_\delta))} 
	+ \big\|\sqrt{T_k(n_k)}\na V_k\big\|_{L^2(0,T;L^2(\Omega_\delta))} \le C\delta^{-1},
$$
and similarly for $p_k$ and $D_k$. 

Next, we use the Gagliardo--Nirenberg
inequality with $q=2+2/d$ \cite[p.~95]{Jue16} and \eqref{3.esthk}:
$$
  \|h_k(n_k)\|_{L^q(0,T;L^q(\Omega_\delta))}
	\le C\|h_k(n_k)\|_{L^2(0,T;H^1(\Omega_\delta))}^{d/(d+1)}
	\|h_k(n_k)\|_{L^\infty(0,T;L^1(\Omega_\delta))}^{1/(d+1)}
  \le C\delta^{-d/(d+1)}.
$$
We deduce from Lemma \ref{lem.aux} in the Appendix that
\begin{equation}\label{3.TkLq}
  \big\|\sqrt{T_k(n_k)}\big\|_{L^q(\Omega_\delta\times(0,T))}
	\le C\|h_k(n_k)\|_{L^{q}(\Omega_\delta\times(0,T))} \le C(\delta).
\end{equation}
It follows from these estimates and H\"older's inequality that
\begin{align*}
  \|\na n_k\|_{L^r(0,T;L^r(\Omega_\delta))}
	&= \big\|\sqrt{T_k(n_k)}\na h_k(n_k)\big\|_{L^r(0,T;L^r(\Omega_\delta))} \\
	&\le \big\|\sqrt{T_k(n_k)}\big\|_{L^q(0,T;L^q(\Omega_\delta))}
	\|\na h_k(n_k)\|_{L^2(0,T;L^2(\Omega_\delta))} \le C(\delta),
\end{align*}
recalling that $r=(2+2d)/(1+2d)>1$. Similar estimates are derived for
$\na p_k$ and $\na D_k$. Thanks to the Poincar\'e--Wirtinger
inequality and \eqref{3.nlogn}, this shows \eqref{3.W1r}.
Because of the $L^q(\Omega_\delta\times(0,T))$ bound for 
$\sqrt{T_k(n_k)}$ from \eqref{3.TkLq} and the $L^2(\Omega_\delta\times(0,T))$ 
bound for $\sqrt{T_k(n_k)}\na V_k$ from \eqref{3.nah2}, 
$$
  \na n_k - T_k(n_k)\na V_k = \na n_k - \sqrt{T_k(n_k)}\cdot\sqrt{T_k(n_k)}\na V_k
$$
is uniformly bounded in $L^r(\Omega_\delta\times(0,T))$ (depending on $\delta$).
Consequently, $\pa_t n_k$ is uniformly bounded in 
$L^r(0,T;W^{-1,r}(\Omega_\delta))$.
The uniform bounds for $p_k$ and $D_k$ are proved in an analogous way.
\end{proof}

The proof shows that the current density $\na n_k-T_k(n_k)\na V_k$ (and similar for
$p_k$ and $D_k$) is bounded in $L^r(\Omega_\delta\times(0,T))$ uniformly in $k$.
This improves the estimates of Lemma \ref{lem.estapprox}.

\subsection{The limit $k\to\infty$}\label{sec.limitk}

Thanks to estimates \eqref{3.W1r} and \eqref{3.time}, the Aubin--Lions lemma
implies, for any fixed $\delta>0$, the existence of a subsequence of
$(n_k,p_k,D_k)$, which is not relabeled, such that
\begin{equation*}
  n_k\to n,\ p_k\to p,\ D_k\to D\quad\mbox{strongly in }
	L^r(\Omega_\delta\times(0,T))\mbox{ as }k\to\infty.
\end{equation*}
By the Theorem of De la Vall\'ee--Poussin, applied to \eqref{3.nlogn},
the limit functions are uniquely determined in $Q_T$ by the weak
convergence of $(n_k,p_k,D_k)$ in $L^1(Q_T)$. We choose $\delta=1/m$ for
$m\in\N$, $m\ge 1$ and apply a Cantor diagonal argument to deduce the existence of
$\delta$-independent subsequences of $(n_k,p_k,D_k)$, which are strongly
converging to $(n,p,D)$ in $L^s(\Omega_\delta\times(0,T))$ for $1<s<r$ and
every $\delta=1/m$ and consequently also for any $0<\delta<1$, since
$\Omega_\delta\subset\Omega_{\delta'}$ for $\delta>\delta'$. This convergence
and the weak convergence $n_k\rightharpoonup n$ in $L^1(Q_T)$ as $k\to\infty$
imply that
\begin{equation}\label{3.equi}
  \limsup_{k\to\infty}\int_0^T\int_\Omega|n_k-n|dxdt
	\le \limsup_{k\to\infty}\int_0^T\int_{\Omega\setminus\Omega_\delta}|n_k-n|dxdt
	\le 2\sup_{k\in\N}\int_0^T\int_{\Omega\setminus\Omega_\delta}n_k dxdt.
\end{equation}
By the Theorem of De la Vall\'ee--Poussin again, estimate \eqref{3.nlogn}
implies the uniform integrability of $(n_k)_{k\in\N}$, such that we conclude
from \eqref{3.equi} that
$$
  \limsup_{k\to\infty}\int_0^T\int_\Omega|n_k-n|dxdt\le C(\delta)\to 0
	\quad\mbox{as }\delta\to 0.
$$
This means that 
$$
  n_k\to n,\ p_k\to p,\ D_k\to D\quad\mbox{strongly in }
	L^1(Q_T).
$$
We claim that this convergence implies that $T_k(n_k)\to n$ strongly in
$L^1(Q_T)$ and similarly for $p_k$ and $D_k$. Indeed, we infer from
bound \eqref{3.nlogn} that, as $k\to\infty$,
\begin{align*}
  \int_0^T\int_\Omega|T_k(n_k)-n_k|dxdt 
	&\le \int_0^T\int_{\{n_k\ge k\}}|k-n_k| dxdt \\
	&\le \int_0^T\int_{\{n_k\ge k\}}n_k\frac{\log n_k}{\log k}dxdt
	\le \frac{C}{\log k}\to 0.
\end{align*}
Then the convergence $n_k\to n$ strongly in $L^1(Q_T)$ shows the claim.

Now, the limit $k\to\infty$ in the approximate equations is rather standard
except the limit in the flux term. For this, we observe that the bound on the
entropy production in \eqref{3.eik} yields, possibly for a subsequence, that
for $k\to\infty$,
\begin{equation}\label{3.aux}
  \na h_k(n_k) - \sqrt{T_k(n_k)}\na V_k \rightharpoonup\xi
	\quad\mbox{weakly in }L^2(Q_T).
\end{equation}
We wish to identify $\xi$. For this, we claim that $h_k(n_k)-2\sqrt{n_k}\to 0$ 
in $L^2(Q_T)$. An elementary computation shows that
$h_k(s)=2\sqrt{s}$ for $0\le s\le k$ and $h_k(s)=s/\sqrt{k}+\sqrt{k}$ for $s\ge k$.
Therefore, 
$$
  \sup_{0<t<T}\int_\Omega|h_k(n_k)-2\sqrt{n_k}| dx
	= \frac{1}{\sqrt{k}}\sup_{0<t<T}
    \int_{\{n_k>k\}}\big(\sqrt{n_k}-\sqrt{k}\big)^2 dx
	\le \frac{C}{\sqrt{k}}\to 0,
$$
where the constant $C>0$ depends on the $L^\infty(0,T;L^1(\Omega))$ norm of $n_k$.
We infer from $\sqrt{n_k}\to\sqrt{n}$ strongly in $L^2(Q_T)$ that
$h_k(n_k)\to 2\sqrt{n}$ strongly in $L^1(Q_T)$ and consequently,
$$
  \|\na(h_k(n_k)-2\sqrt{n})\|_{L^2(0,T;W^{1,\infty}(\Omega)')}
	\le \|h_k(n_k)-2\sqrt{n}\|_{L^\infty(0,T;L^1(\Omega))}\to 0
$$
or $\na h_k(n_k)\to 2\na\sqrt{n}$ strongly in $L^2(0,T;W^{1,\infty}(\Omega)')$.
The free energy inequality \eqref{3.eik} implies, possibly for a subsequence,
that $\na V_k\rightharpoonup\na V$ weakly* in $L^\infty(0,T;L^2(\Omega))$. 
The limit $\sqrt{T_k(n_k)}\to \sqrt{n}$ strongly in $L^2(Q_T)$ leads to
$\sqrt{T_k(n_k)}\na V_k\rightharpoonup \sqrt{n}\na V$ weakly in $L^1(Q_T)$.
These convergences imply that $\xi=2\na\sqrt{n}-\sqrt{n}\na V$ and,
using \eqref{3.aux} and $\sqrt{n_k}\to\sqrt{n}$ in $L^2(Q_T)$ again,
$$
  \na n_k - n_k\na V_k = \sqrt{n_k}\big(2\na\sqrt{n_k}-\sqrt{n_k}\na V_k\big)
	\rightharpoonup \na n-n\na V\quad\mbox{weakly in }L^1(Q_T).
$$
This estimate shows that for all $\chi\in L^\infty(0,T)$ and
$\phi\in H^s(\Omega)\cap H_D^1(\Omega)$ with $s>1+d/2$,
\begin{align*}
  \eps\int_0^T\chi\langle\pa_t n_k,\phi\rangle dt
	&= -\int_0^T\int_\Omega\chi(\na n_k-n_k\na V_k)\cdot\na\phi dxdt \\
	&\to -\int_0^T\int_\Omega\chi(\na n-n\na V)\cdot\na\phi dxdt,
\end{align*}
since $H^s(\Omega)\hookrightarrow W^{1,\infty}(\Omega)$. The space
$X=H^s(\Omega)\cap H_D^1(\Omega)$ is reflexive and so does its dual.
Thus, we can apply \cite[Lemma 6]{ChJu06} to conclude that
$\pa_t n_k\rightharpoonup w$ weakly in $L^1(0,T;X')$ for some $w$. 
We can identify $w=\pa_t n$ since $n_k\to n$ strongly in $L^1(Q_T)$ 
and so, $\pa_t n_k\rightharpoonup \pa_t n$ in the sense of distributions.
Then the limit $k\to\infty$ in the weak formulation
$$
  \eps\int_0^T\langle \pa_t n_k,\phi\rangle dt
	+ \int_0^T\int_\Omega(\na n_k-n_k\na V_k)\cdot\na\phi dxdt = 0
$$
leads to
$$
  \eps\int_0^T\langle\pa_t n,\phi\rangle dt
	+ \int_0^T\int_\Omega(\na n-n\na V)\cdot\na\phi dxdt = 0
$$
for all $\phi\in L^\infty(0,T;X)$.
The limit $k\to\infty$ for $p_k$ and $D_k$ is performed in a similar way.


\section{Proof of Theorem \ref{thm.infty}}\label{sec.infty}

We show that a weak solution to \eqref{1.n}--\eqref{1.nbcd} is bounded in the
case of two space dimensions. First, we prove an $L^\infty(0,T;L^2(\Omega))$ bound.

\begin{lemma}\label{lem.L2}
Let $d\le 2$. Then there exists $C>0$, depending on the $L^\infty(0,T;L^1(\Omega))$
bounds of $n_k\log n_k$, $p_k\log p_k$, and $D_k\log D_k$ but independent of
$k$ and $\eps$, such that
\begin{equation*}
  \sqrt{\eps}\|n_k\|_{L^\infty(0,T;L^2(\Omega))}
	+ \sqrt{\eps}\|p_k\|_{L^\infty(0,T;L^2(\Omega))}
	+ \|D_k\|_{L^\infty(0,T;L^2(\Omega))} \le C.
\end{equation*}
\end{lemma}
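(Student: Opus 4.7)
The plan is to test the three approximate equations with natural $H_D^1$ or $H^1$ modifications of the unknowns, and to close the resulting inequality using the two-dimensional regularity of the Poisson equation. First, I would use $n_k-\nD$ as test function in \eqref{3.nk}, $p_k-\pD$ in \eqref{3.pk}, and $D_k$ in \eqref{3.Ndk} (the latter being admissible since $D_k$ satisfies no-flux conditions on the entire boundary $\pa\Omega$). Summing the resulting identities and setting
\[
Y_k(t)=\tfrac12\bigl(\eps\|n_k-\nD\|_{L^2(\Omega)}^2+\eps\|p_k-\pD\|_{L^2(\Omega)}^2+\|D_k\|_{L^2(\Omega)}^2\bigr),
\]
one obtains
\[
\frac{dY_k}{dt}+\|\na n_k\|_{L^2}^2+\|\na p_k\|_{L^2}^2+\|\na D_k\|_{L^2}^2 = \mathcal I_k(t)+R_k(t),
\]
where $\mathcal I_k(t)$ collects the drift integrals $\pm\int_\Omega T_k(\cdot)\na V_k\cdot\na(\cdot)\,dx$ and the remainder $R_k$ gathers the lower-order terms arising from the Dirichlet shifts, controlled by the $W^{1,\infty}$ regularity of $\nD,\pD$ provided by (A3).

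Next, I would exploit the two-dimensional structure. Since by \eqref{3.nlogn} the densities $n_k,p_k,D_k$ lie in $L^\infty(0,T;L\log L(\Omega))$ uniformly in $k,\eps$, the right-hand side of \eqref{3.Vk} belongs to the same space, and the Gr\"oger regularity \eqref{1.regulV} gives some $r_0>2$ (depending only on the Dirichlet/Neumann split of $\pa\Omega$) such that $\|\na V_k\|_{L^{r_0}(\Omega)}$ is controlled by $\|n_k-p_k-D_k+A\|_{L^{2r_0/(r_0+2)}(\Omega)}$, itself estimated by interpolation between $L^1$ (uniform from the $L\log L$ bound) and $L^2$ (the norm we want to bound). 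I would then apply H\"older's inequality with exponents $(r_0',r_0,2)$, where $r_0'=2r_0/(r_0-2)$, to each drift term, together with the two-dimensional Gagliardo--Nirenberg inequality
\[
\|n_k\|_{L^{r_0'}(\Omega)} \le C\bigl(\|n_k\|_{L^1(\Omega)} + \|n_k\|_{L^1(\Omega)}^{1/r_0'}\|\na n_k\|_{L^2(\Omega)}^{1-1/r_0'}\bigr),
\]
and analogously for $p_k,D_k$. A careful use of Young's inequality then absorbs a $\delta$-fraction of $\|\na n_k\|_{L^2}^2+\|\na p_k\|_{L^2}^2+\|\na D_k\|_{L^2}^2$ into the diffusion on the left-hand side, leaving a forcing of the form $C(1+g(t))Y_k(t)$ with $g\in L^1(0,T)$ built from the time-integrable quantity $\|\na V_k\|_{L^{r_0}}^{r_0'}$, controlled uniformly in $k,\eps$ via \eqref{1.regulV}. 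Gronwall's lemma then yields the claimed $L^\infty(0,T;L^2(\Omega))$ bound; the prefactors $\sqrt\eps$ for $n_k,p_k$ and $1$ for $D_k$ simply mirror the $\eps\pa_t$ versus $\pa_t$ split in \eqref{3.nk}--\eqref{3.Ndk}.

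The main obstacle is the exponent bookkeeping: the successive applications of Gagliardo--Nirenberg and Young's inequality must align so that the surviving contribution is at most linear in $Y_k$, since a superlinear forcing $Y_k^\beta$ with $\beta>1$ would yield only local-in-time control and not a bound uniform in $k$ and $\eps$. This balance succeeds precisely because $d=2$: the two-dimensional Gagliardo--Nirenberg exponents, coupled with the Gr\"oger exponent $2r_0/(r_0+2)\in(1,2)$, combine to produce exactly $Y_k$ on the right-hand side. In dimensions $d\ge 3$ the same strategy would produce superlinear forcing, consistent with the hypothesis $d\le 2$ in the statement.
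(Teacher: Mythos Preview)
Your overall strategy---testing \eqref{3.nk}, \eqref{3.pk}, \eqref{3.Ndk} with $n_k-\nD$, $p_k-\pD$, $D_k$, splitting the drift via H\"older with exponents $(r_0',r_0,2)$, and invoking the two-dimensional Gr\"oger regularity \eqref{1.regulV}---matches the paper. The gap is in the exponent bookkeeping you flag yourself as ``the main obstacle'': with the \emph{standard} Gagliardo--Nirenberg inequality you write down, the balance does \emph{not} close.

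Concretely, your inequality gives $\|D_k\|_{L^{r_0'}}\le C\,\|D_k\|_{H^1}^{(r_0+2)/(2r_0)}$ (absorbing the uniform $L^1$ bound). Combined with $\|\na D_k\|_{L^2}$, the drift term carries $H^1$-exponent $1+(r_0+2)/(2r_0)=(3r_0+2)/(2r_0)$. Young's inequality against $\|D_k\|_{H^1}^2$ then forces the residual factor $\|\na V_k\|_{L^{r_0}}$ to the power $4r_0/(r_0-2)=2r_0'$, not $r_0'$; and no $Y_k$ factor appears. So your claimed forcing $C(1+g(t))Y_k(t)$ with $g=\|\na V_k\|_{L^{r_0}}^{r_0'}\in L^1(0,T)$ is not what Young produces. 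Moreover, there is \emph{no} a priori uniform bound on $\int_0^T\|\na V_k\|_{L^{r_0}}^{2r_0'}dt$ (or even on $\int_0^T\|\na V_k\|_{L^{r_0}}^{r_0'}dt$) before the $L^2$ bounds you are trying to prove: the free energy only gives $\na V_k\in L^\infty(0,T;L^2)$, and Gr\"oger's estimate requires the densities in $L^{2r_0/(r_0+2)}$, which $L\log L$ alone does not supply. If instead you feed \eqref{1.regulV} back through Gagliardo--Nirenberg to express $\|\na V_k\|_{L^{r_0}}$ in terms of the $H^1$ norms of the densities, the total $H^1$-homogeneity of the drift becomes exactly $2$ with a fixed (not small) constant in front, so it cannot be absorbed into the diffusion.

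The paper fixes this with one extra ingredient: the \emph{generalized} Gagliardo--Nirenberg inequality \eqref{GN2}, which for any $\delta>0$ gives
\[
\|u\|_{L^q(\Omega)}\le \delta\|u\|_{H^1(\Omega)}^{\theta}\|u\log|u|\|_{L^1(\Omega)}^{1-\theta}+C(\delta)\|u\|_{L^1(\Omega)}.
\]
The $L\log L$ norm is already uniformly bounded by \eqref{3.nlogn}, so this estimate carries a \emph{free small parameter} $\delta$. Applying it both to $\|D_k\|_{L^{r_0'}}$ and to the right-hand side of \eqref{1.regulV}, the drift becomes $\delta C\sum_u\|u\|_{H^1}^2+C(\delta)$, which is absorbed directly by the diffusion for $\delta$ small. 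No Gronwall is needed, and the resulting bound is uniform in $\eps$. Your argument is one inequality away from working; replace your standard Gagliardo--Nirenberg by \eqref{GN2}.
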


\begin{proof}
We use the test function $D_k$ in the weak formulation of \eqref{3.Ndk}, the
inequality $T_k(D_k)\le D_k$, and apply H\"older's inequality:
\begin{align}
  \frac12\int_\Omega &(D_k(t)^2-(D^I)^2) dx + \int_0^t\int_\Omega|\na D_k|^2 dxds
	= - \int_0^t\int_\Omega T_k(D_k)\na V_k\cdot\na D_k dxds \label{5.aux} \\
	&\le \int_0^t\|D_k\|_{L^{2r_0/(r_0-2)}(\Omega)}\|\na V_k\|_{L^{r_0}(\Omega)}
	\|\na D_k\|_{L^{2}(\Omega)}ds, \nonumber
\end{align}
where $r_0>2$ is from Lemma \ref{lem.vinfty} in the Appendix. 
The second term on the left-hand side is estimated by using the
Poincar\'e--Wirtinger inequality:
\begin{align*}
  \int_0^t\int_\Omega|\na D_k|^2 dxds &\ge C\int_0^t\|D_k\|_{H^1(\Omega)}^2 ds 
	- C\int_0^t\|D_k\|_{L^1(\Omega)}^2ds \\
	&\ge C_1\int_0^t\|D_k\|_{H^1(\Omega)}^2 ds - C_2,
\end{align*}
where $C_2>0$ depends on $T$ and the $L^\infty(0,T;L^1(\Omega))$ norm of $D_k$.
For the right-hand side of \eqref{5.aux}, we use 
Lemma \ref{lem.GN} with $q=2r_0/(r_0+2)$ and Lemma \ref{lem.vinfty}:
\begin{align*}
	\|D_k\|_{L^{2r_0/(r_0+2)}(\Omega)}
	&\le \delta\|D_k\|_{H^1(\Omega)}^{(r_0-2)/(2r_0)}
	\|D_k\log D_k\|_{L^1(\Omega)}^{(r_0+2)/(2r_0)} + C(\delta)\|D_k\|_{L^1(\Omega)} \\
	&\le \delta C\|D_k\|_{H^1(\Omega)}^{(r_0-2)/(2r_0)} + C(\delta), \\
	\|\na V_k\|_{L^{r_0}(\Omega)} 
	&\le C(1+\|n_k-p_k-D_k+A\|_{L^{2r_0/(r_0+2)}(\Omega)}) \\
	&\le C\big(1 + \|n_k\|_{H^1(\Omega)}^{(r_0-2)/(2r_0)}
	+ \|p_k\|_{H^1(\Omega)}^{(r_0-2)/(2r_0)} 
	+ \|D_k\|_{H^1(\Omega)}^{(r_0-2)/(2r_0)}\big),
\end{align*}
where $C>0$ and $C(\delta)>0$ depend on the $L^\infty(0,T;L^1(\Omega))$ norms of 
$n_k\log n_k$, $p_k\log p_k$, and $D_k\log D_k$. 
We conclude from \eqref{5.aux} that
\begin{align*}
  \|&D_k(t)\|_{L^2(\Omega)}^2 + C\int_0^t\|D_k\|_{H^1(\Omega)}^2 ds
	\le \|D^I\|_{L^2(\Omega)}^2 + C(\delta) \\
	&\phantom{xx}{}{}+ \delta C\int_0^t\|D_k\|_{H^1(\Omega)}^{1+(r_0-2)/(2r_0)}
	C\big(1 + \|n_k\|_{H^1(\Omega)}^{(r_0-2)/(2r_0)}
	+ \|p_k\|_{H^1(\Omega)}^{(r_0-2)/(2r_0)} 
	+ \|D_k\|_{H^1(\Omega)}^{(r_0-2)/(2r_0)}\big)ds \\
	&\le \|D^I\|_{L^2(\Omega)}^2 + C(\delta)
	+ \delta C\int_0^t\big(1 + \|n_k\|_{H^1(\Omega)}^2 + \|p_k\|_{H^1(\Omega)}^2
	+ \|D_k\|_{H^1(\Omega)}^2\big)ds.
\end{align*}
We can apply Young's inequality in the last step since
$1+(r_0-2)/(2r_0)=3/2-1/r_0<2$.
Similar inequalities can be derived for $n_k$ and $p_k$ (using the test functions
$n_k-\nD$ and $p_k-\pD$). Adding these inequalities and choosing $\delta>0$
sufficiently small leads to
\begin{align*}
  \eps\|&n_k(t)\|_{L^2(\Omega)}^2 + \eps\|p_k(t)\|^{2}_{L^2(\Omega)} 
	+ \|D_k(t)\|^{2}_{L^2(\Omega)} \\
	&{}+ C\int_0^t\big(\|n_k\|_{H^1(\Omega)}^2+\|p_k\|_{H^1(\Omega)}^2
	+\|D_k\|_{H^1(\Omega)}^2\big) ds	\le C,
\end{align*}
and the constant $C>0$ depends on the initial data and the $L^\infty(0,T;L^1(\Omega))$
norms of $n_k\log n_k$, $p_k\log p_k$, and $D_k\log D_k$. 
\end{proof}

\begin{lemma}\label{lem.Vk}
Let $d\le 2$. Then there exists $C(\eps)>0$, independent of $k$, such that
$$
  \|V_k\|_{L^\infty(0,T;W^{1,r_0}(\Omega))} \le C(\eps),
$$
where $r_0>2$ is from Lemma \ref{lem.vinfty} in the Appendix.
\end{lemma}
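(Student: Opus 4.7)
The plan is to invoke the Gr\"oger-type elliptic regularity supplied by Lemma \ref{lem.vinfty} for the mixed boundary-value Poisson problem \eqref{3.Vk}, and feed into it the $L^\infty(0,T;L^2(\Omega))$ bounds for $n_k$, $p_k$, $D_k$ obtained in Lemma \ref{lem.L2}. Since Lemma \ref{lem.L2} already assumes $d\le 2$, this hypothesis carries over and is precisely the source of the dimensional restriction here.

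Concretely, I would first apply Lemma \ref{lem.vinfty} pointwise in $t$ to $V_k(\cdot,t)$, yielding with the $r_0>2$ from that lemma
$$
  \|\na V_k(t)\|_{L^{r_0}(\Omega)}
  \le C\bigl(1+\|n_k(t)-p_k(t)-D_k(t)+A\|_{L^{2r_0/(r_0+2)}(\Omega)}\bigr).
$$
Because $r_0>2$ forces $q:=2r_0/(r_0+2)<2$, the bounded-domain embedding $L^2(\Omega)\hookrightarrow L^q(\Omega)$ reduces the right-hand side to the $L^\infty(0,T;L^2(\Omega))$ norms of $n_k$, $p_k$, $D_k$, all of which are controlled by $C(\eps)$ via Lemma \ref{lem.L2}; adding $\|A\|_{L^\infty(\Omega)}$ is harmless. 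This already gives $\|\na V_k\|_{L^\infty(0,T;L^{r_0}(\Omega))}\le C(\eps)$. The $\eps$-dependence enters exactly through the $\sqrt{\eps}$-weighted $L^2$ bounds on $n_k$ and $p_k$ in Lemma \ref{lem.L2}.

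To upgrade to a full $W^{1,r_0}$ estimate, I would decompose $V_k=(V_k-\VD)+\VD$. Since $V_k-\VD\in\V$ and $\operatorname{meas}(\Gamma_D)>0$, the Poincar\'e--Friedrichs inequality on $W^{1,r_0}$-functions vanishing on $\Gamma_D$ (valid for Lipschitz domains and any $r_0>1$) gives
$$
  \|V_k-\VD\|_{L^{r_0}(\Omega)}
  \le C\,\|\na V_k-\na\VD\|_{L^{r_0}(\Omega)},
$$
and the regularity $\VD\in W^{1,\infty}(\Omega)\hookrightarrow W^{1,r_0}(\Omega)$ from (A3) absorbs the boundary-lift contribution. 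Combining this with the gradient bound produced in the previous step yields the desired uniform estimate. I do not anticipate any serious obstacle: the argument is essentially the mechanical assembly of the elliptic regularity of Lemma \ref{lem.vinfty} with the second-moment bound of Lemma \ref{lem.L2}, and the only subtle point is keeping track of the $\eps$-dependence inherited from Lemma \ref{lem.L2}.
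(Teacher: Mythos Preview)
Your proposal is correct and follows essentially the same route as the paper: both feed the $L^\infty(0,T;L^2(\Omega))$ bounds of Lemma~\ref{lem.L2} into the elliptic estimate of Lemma~\ref{lem.vinfty}, using $2r_0/(r_0+2)<2$. The only cosmetic difference is that the paper controls the lower-order part of the $W^{1,r_0}$ norm via the $L^\infty(0,T;H^1(\Omega))$ bound on $V_k$ coming from the free energy inequality (which also supplies the term $g(\|V_k\|_{H^1})$ appearing in Lemma~\ref{lem.vinfty}), whereas you recover it by Poincar\'e--Friedrichs; both are valid.
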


\begin{proof}
The free energy inequality implies that $V_k$ is uniformly bounded in
$L^\infty(0,T;H^1(\Omega))$. Then the estimate for $V_k$ is a consequence of 
Lemma \ref{lem.vinfty} and Lemma \ref{lem.L2}:
$$
  \|V_k\|_{L^\infty(0,T;W^{1,r_0}(\Omega))}
	\le C\big(\|n_k-p_k-D_k+A\|_{L^\infty(0,T;L^{2r_0/(r_0+2)}(\Omega))}+1\big)
	\le C(\eps),
$$
since $2r_0/(r_0+2)<2$.
\end{proof}

The following lemma provides $L^\infty$ bounds depending on the truncation
parameter $k$. This result is used to prove uniform $L^\infty$ bounds later.

\begin{lemma}\label{lem.inftyk}
Let $d\le 2$  and $n^I$, $p^I$, $D^I\in L^\infty(\Omega)$. 
Then there exists $C>0$, depending on the $L^\infty(0,T;L^1(\Omega))$
bounds of $n_k\log n_k$, $p_k\log p_k$, $D_k\log D_k$ and on $\eps$, $k$
(and possibly on $T$), such that
$$
  \|n_k\|_{L^\infty(Q_T)} + \|p_k\|_{L^\infty(Q_T)}
	+ \|D_k\|_{L^\infty(Q_T)} \le C(\eps,k).
$$
\end{lemma}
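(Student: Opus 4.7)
The plan is to run an Alikakos--Moser iteration on $L^q$-norms, with $q\to\infty$. The essential inputs are (i) Lemma \ref{lem.Vk}, which gives $\|\nabla V_k\|_{L^\infty(0,T;L^{r_0}(\Omega))}\le C(\eps)$ for some $r_0>2\ge d$, and (ii) the truncation bound $T_k(n_k)\le k$. Together these reduce the nonlinear drift in \eqref{3.nk} to the expression $T_k(n_k)\nabla V_k$, whose magnitude is controlled by the $L^\infty(0,T;L^{r_0}(\Omega))$-function $k|\nabla V_k|$, with $r_0$ strictly larger than the space dimension. This is precisely the regime where standard linear parabolic regularity yields boundedness of solutions.

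To render the argument rigorous, fix $M_0>\max(\|n^I\|_{L^\infty(\Omega)},\|\overline{n}\|_{L^\infty(\Omega)})$ and put $u=(n_k-M_0)_+$. Since $n_k(\cdot,0)\le M_0$ and $n_k=\overline{n}\le M_0$ on $\Gamma_D$, the function $u(\cdot,t)$ belongs to $H^1_D(\Omega)$ for a.e.~$t\in(0,T)$. For $q\ge 2$ and $K>0$, setting $u_K:=\min(u,K)$, the function $u_K^{q-1}$ is an admissible test function in \eqref{3.nk}. Using $\nabla n_k=\nabla u$ on $\{u>0\}$, $T_k(n_k)\le k$, and Young's inequality to absorb part of the diffusion, one obtains
\begin{align*}
  \frac{\eps}{q}\frac{d}{dt}\int_\Omega u_K^q\,dx + \frac{2(q-1)}{q^2}\int_\Omega|\nabla u_K^{q/2}|^2\,dx
  \le Ck^2 q\,\|\nabla V_k\|_{L^{r_0}(\Omega)}^2\,\|u_K\|_{L^{(q-2)r_0/(r_0-2)}(\Omega)}^{q-2},
\end{align*}
where the right-hand side comes from H\"older's inequality with exponents $r_0/2$ and $r_0/(r_0-2)$.

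Because $d\le 2$ and $u_K^{q/2}\in H^1_D(\Omega)$, the Gagliardo--Nirenberg inequality interpolates $\|u_K^{q/2}\|_{L^s}$ between $\|\nabla u_K^{q/2}\|_{L^2}$ and $\|u_K^{q/2}\|_{L^2}$ at any finite $s$; choosing $s=qr_0/(r_0-2)$ makes the right-hand side a power of $\|\nabla u_K^{q/2}\|_{L^2}$ strictly less than $2$ (since $r_0>2$), which is reabsorbed into the diffusion by Young's inequality. Integrating in time, passing $K\to\infty$ via monotone convergence, and using Lemma \ref{lem.L2} for the base step $q=2$, one obtains a recursion bounding $\|u\|_{L^\infty(0,T;L^q(\Omega))}$ in terms of $\|u\|_{L^\infty(0,T;L^{q/2}(\Omega))}$. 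Iterating along $q_j=2^j$ in Alikakos style and checking that the $q$-dependent constants, raised to the power $1/q_j$, converge, the limit $j\to\infty$ delivers $\|u\|_{L^\infty(Q_T)}\le C(\eps,k,T)$, hence $\|n_k\|_{L^\infty(Q_T)}\le M_0+C(\eps,k,T)$. The same template applies to $p_k$, and for $D_k$ the argument is even simpler because \eqref{3.Ndk} carries a pure no-flux boundary condition, so one works directly with $(D_k-M_0)_+$ for any $M_0>\|D^I\|_{L^\infty(\Omega)}$.

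The main technical point is the bookkeeping in the Alikakos iteration: the Gagliardo--Nirenberg and Young constants both depend on $q$, and one has to verify that their $1/q_j$-th powers remain bounded. This succeeds precisely because $r_0>d=2$; in $d\ge 3$ the analogous regularity from Lemma \ref{lem.vinfty} would only give $r_0$ satisfying a weaker constraint and the iteration would break, which is why the statement is restricted to two space dimensions. The dependence of the final constant on $k$ is harmless here since $T_k(n_k)\le k$ makes the drift coefficient bounded; removing this dependence will be the main additional difficulty when proving Theorem \ref{thm.infty}.
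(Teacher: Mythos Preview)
Your proposal is correct and follows essentially the same approach as the paper: both shift by a constant $M$ exceeding the initial and boundary data, test with truncated powers $\phi_L(n_k)^{q-1}$ (your $u_K^{q-1}$), exploit $T_k(n_k)\le k$ together with the $L^\infty(0,T;W^{1,r_0}(\Omega))$ bound on $V_k$ from Lemma~\ref{lem.Vk}, and close by an Alikakos iteration (the paper packages this as Lemma~\ref{lem.infty}, you sketch it directly). One small slip: to get the recursion in $\|u\|_{L^{q/2}}$ you need to interpolate $\|u_K^{q/2}\|_{L^s}$ against $\|u_K^{q/2}\|_{L^1}$, not $\|u_K^{q/2}\|_{L^2}$ as you wrote, but this does not affect the argument.
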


\begin{proof}
Let $q\ge 2$ and $\|D^I\|_{L^\infty(\Omega)}<M<L$.
We set $[z]_L=\min\{L,z\}$, $z_+=\max\{0,z\}$, 
and $\phi_L(z)=([z]_L-M)_+$ for $z\in\R$. Then
$$
  \int_0^z\phi_L(s)^{q-1}ds \ge \frac{1}{q}\phi_L(z)^q\quad\mbox{for }z\ge 0.
$$
Because of the truncation, $\phi_L(D_k)^{q-1}$ is an admissible test function 
in the weak formulation of \eqref{3.Ndk}. Observing that the 
definition of $M$ shows that
$$
  \int_0^t\langle\pa_t D_k,\phi_L(D_k)^{q-1}\rangle ds 
	\ge\frac{1}{q}\int_\Omega \phi_L(D_k(t))^q dx,
$$
we obtain from \eqref{3.Ndk}, H\"older's inequality, and $T_k(D_k)\le k$:
\begin{align}\label{5.aux1}
  \frac{1}{q}&\int_\Omega \phi_L(D_k(t))^q dx 
	+ \frac{4}{q^2}(q-1)\int_0^t \int_\Omega |\na\phi_L(D_k)^{q/2}|^2 dxds \\
	&= -\int_0^t\int_\Omega T_k(D_k)\na V_k\cdot\na\phi_L(D_k)^{q-1}dxds \nonumber \\
	&\le Ck\int_0^t\|\na V_k\|_{L^{r_0}(\Omega)}\|\na\phi_L(D_k)^{q/2}\|_{L^2(\Omega)}
	\|\phi_L(D_k)^{q/2-1}\|_{L^{r'_0}(\Omega)}ds, \nonumber
\end{align}
where $r_0>2$ is from Lemma \ref{lem.vinfty} and $r_0'=2r_0/(r_0-2)>2$.
By definition of the $H^1(\Omega)$ norm,
\begin{equation*}
  \int_0^t\int_\Omega|\na\phi_L(D_k)^{q/2}|^2 dxds
	\ge \int_0^t\big(\|\phi_L(D_k)^{q/2}\|_{H^1(\Omega)}^2 
	- \|\phi_L(D_k)^{q/2}\|_{L^2(\Omega)}^2\big)ds.
\end{equation*}
By Lemma \ref{lem.Vk}, the $L^\infty(0,T;L^{r_0}(\Omega))$ norm of 
$\na V_k$ is bounded uniformly in $k$.
Then, by the Gagliardo--Nirenberg inequality \eqref{GN}, setting $s=(1-2/q)r_0'$:
\begin{align*}
  \|\phi_L(D_k)^{q/2-1}\|_{L^{r_0'}(\Omega)}
	&= \|\phi_L(D_k)^{q/2}\|_{L^s(\Omega)}^{1-2/q} \\
	&\le C\|\phi_L(D_k)^{q/2}\|_{H^1(\Omega)}^{(1-1/s)(1-2/q)}
	\|\phi_L(D_k)^{q/2}\|_{L^1(\Omega)}^{(1/s)(1-2/q)} \\
	&\le C + C\|\phi_L(D_k)^{q/2}\|_{H^1(\Omega)}^{1-1/s}
	\|\phi_L(D_k)^{q/2}\|_{L^1(\Omega)}^{1/s}.
\end{align*}
Inserting these estimates into \eqref{5.aux1} and using Young's inequality
for an arbitrary $\delta>0$, we arrive at
\begin{align*}
  \|\phi_L&(D_k(t))\|_{L^q(\Omega)}^q 
	+ C\int_0^t\|\phi_L(D_k)^{q/2}\|_{H^1(\Omega)}^2 ds \\
	&\le Ckq + C(k)q\int_0^t\|\phi_L(D_k)^{q/2}\|_{L^2(\Omega)}^2 ds
	+ Ckq\int_0^t\|\phi_L(D_k)^{q/2}\|_{H^1(\Omega)}^{2-1/s}
	\|\phi_L(D_k)^{q/2}\|_{L^1(\Omega)}^{1/s}ds \\
	&\le Ckq + C(\delta,k)q^{\max\{1,2s\}}\int_0^t\|\phi_L(D_k)^{q/2}\|_{L^2(\Omega)}^2 ds
	+ \delta\int_0^t\|\phi_L(D_k)^{q/2}\|_{H^1(\Omega)}^2 ds.
\end{align*}
It remains to choose a sufficiently small $\delta>0$ to absorb the 
last term on the right-hand side and to apply Lemma \ref{lem.infty},
which yields
$$
  \|\phi_L(D_k(t))\|_{L^\infty(\Omega)}\le C,\quad t\in(0,T),
$$
where $C>0$ does not depend on $L$. The limit $L\to\infty$ then shows that
$(D_k(t)-M)_+\le C$ and consequently, $D_k(t)\le C+M$ in $Q_T$.
The $L^\infty$ bounds for $n_k$ and $p_k$ are proved in an analogous way by
choosing $M>\max\{\|\nD\|_{L^\infty(\Gamma_D)},\|n^I\|_{L^\infty(\Omega)}\}$
and $M>\max\{\|\pD\|_{L^\infty(\Gamma_D)},\|p^I\|_{L^\infty(\Omega)}\}$,
respectively.
\end{proof}

We proceed with the proof of Theorem \ref{thm.infty}, which is 
technically similar to the previous proof.
Let $q\ge 2$ and $M>\|D^I\|_{L^\infty(\Omega)}$.
We set $\phi(z)=(z-M)_+$ for $z\ge 0$. Lemma \ref{lem.inftyk} guarantees that
$e^t\phi(D_k)^{q-1}$ is an admissible test function in the weak formulation of
\eqref{3.Ndk}. (The factor $e^t$ allows us to derive time-uniform bounds.)
Using $T_k(D_k)\le (D_k-M)_+ + M =\phi(D_k)+M$ and computing 
similarly as in the proof of Lemma \ref{lem.inftyk}, we find that
\begin{align*}
  \|e^t&\phi(D_k(t))\|_{L^q(\Omega)}^q 
	+ C\int_0^t e^s\|\phi(D_k)^{q/2}\|_{H^1(\Omega)}^2 ds
	- C\int_0^t e^s\|\phi(D_k)^{q/2}\|_{L^2(\Omega)}^2 ds \\
	&\le Cq\int_0^t e^s\|\na V_k\|_{L^{r_0}(\Omega)}\|\na\phi(D_k)^{q/2}\|_{L^2(\Omega)}
	\|\phi(D_k)^{q/2-1}(\phi(D_k)+M)\|_{L^{r'_0}(\Omega)}ds.
\end{align*}
recalling that $r'_0=2r_0/(r_0-2)>2$. 
Taking into account the $L^\infty(0,T;W^{1,r_0}(\Omega)$
bound for $V_k$, independent of $k$, and the Gagliardo--Nirenberg
inequality, we compute
\begin{align*}
  \|e^t&\phi(D_k(t))\|_{L^q(\Omega)}^q 
	+ C\int_0^t e^s\|\phi(D_k)^{q/2}\|_{H^1(\Omega)}^2 ds 
	- C\int_0^t e^s\|\phi(D_k)^{q/2}\|_{L^2(\Omega)}^2 ds \\
	&\le Cq\int_0^t e^s\|\na\phi(D_k)^{q/2}\|_{L^2(\Omega)}
	\big(\|\phi(D_k)^{q/2}\|_{L^{r'_0}(\Omega)}
	+ M\|\phi(D_k)^{q/2-1}\|_{L^{r'_0}(\Omega)}\big)ds \\
	&\le Cq\int_0^t e^s\|\phi(D_k)^{q/2}\|_{H^1(\Omega)}
	\Big(\|\phi(D_k)^{q/2}\|_{H^1(\Omega)}^{1-1/r'_0}
	\|\phi(D_k)^{q/2}\|_{L^1(\Omega)}^{1/r'_0} \\
	&\phantom{xx}{}
	+ MC\big(1+\|\phi(D_k)^{q/2}\|_{H^1(\Omega)}^{1-1/s}
    \|\phi(D_k)^{q/2}\|_{L^1(\Omega)}^{1/s}\big)\Big)ds,
\end{align*}
where $s=(1-2/q)r'_0$. Then it follows from Young's inequality for an arbitrary
$\delta>0$ that
\begin{align*}
  \|e^t&\phi(D_k(t))\|_{L^q(\Omega)}^q 
	+ C\int_0^t e^s\|\phi(D_k)^{q/2}\|_{H^1(\Omega)}^2 ds 
	- C\int_0^t e^s\|\phi(D_k)^{q/2}\|_{L^2(\Omega)}^2 ds \\
	&\le Cqe^t + C\delta\int_0^t e^s\|\phi(D_k)^{q/2}\|_{H^1(\Omega)}^2 ds
	+ C(\delta)q^{\max\{1,2r'_0\}}\int_0^t e^s\|\phi(D_k)^{q/2}\|_{L^2(\Omega)}^2 ds.
\end{align*}
Choosing $\delta>0$ sufficiently small, the second term on the right-hand side
is absorbed from the left-hand side, and Lemma \ref{lem.infty} implies that
$$
  \|\phi(D_k(t))\|_{L^\infty(\Omega)} \le C, \quad t>0,
$$
where $C>0$ is independent of $k$ and $T$ (but depending on $\eps$). 
This shows that $D_k(t)\le C+M$
in $\Omega$, $t>0$. The $L^\infty$ bounds for $n_k$ and $p_k$ are proved
in a similar way.


\section{Proof of Theorem \ref{thm.lim}}\label{sec.lim}

We start with the proof of some estimates.

\subsection{A priori estimates}

The free energy of the limit problem is defined as
\begin{align*}
  H_0[D_0,V_0] &= \int_\Omega\bigg(D_0\log\frac{D_0}{\DD } - D_0
	+ \nD  f_0(V_0-\VD ) + \pD  f_0(\VD -V_0)
	+ \frac{\lambda^2}{2}|\na(V_0-\VD )|^2\bigg)dx,
\end{align*}
where $\DD:=\exp(-\VD)$ and the function $f_0$ is given by 
$f_0(s)=(s-1)e^s+1$ for $s\in\R$.

\begin{lemma}[Free energy inequality for the limit problem]\label{lem.eilimit}
Let $(D_0,V_0)$ be a smooth solution to \eqref{1.limD}--\eqref{1.limbcV}.
Then there exists a constant $C>0$, only depending on $H_0[D^I,V^I]$ and $T$, 
such that
$$
  H_0[D_0(t),V_0(t)] + \frac12\int_0^t\int_\Omega D_0|\na(\log D_0+V_0)|^2 dxds
	\le C, \quad 0<t<T.
$$
\end{lemma}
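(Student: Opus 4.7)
The plan is to derive an \emph{identity} of the form
\[
  \frac{d}{dt}H_0[D_0,V_0] + \int_\Omega D_0|\na(\log D_0 + V_0)|^2\,dx = 0
\]
and then integrate in time; since the production term is non-negative, this at once gives a bound with $C=H_0[D^I,V^I]$ (actually $T$-independent, so the stated factor $1/2$ and $T$-dependence leave ample room, presumably to accommodate the error terms that appear when the formal derivation is carried out at the approximate level).

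First, I would test the weak formulation of \eqref{1.limD} with $\log(D_0/\DD)+V_0-\VD$. Since $\DD=\exp(-\VD)$ we have $\log\DD=-\VD$, so this function simplifies to $\log D_0+V_0$. Moreover $\na D_0+D_0\na V_0 = D_0\na(\log D_0+V_0)$, and the no-flux boundary condition \eqref{1.limbcD} kills the boundary term. Using that $\VD$ and $\DD$ are time-independent, the time-derivative side yields
\[
  \frac{d}{dt}\int_\Omega\bigg(D_0\log\frac{D_0}{\DD}-D_0\bigg)dx + \int_\Omega \pa_t D_0\,(V_0-\VD)\,dx,
\]
while the flux side contributes $-\int_\Omega D_0|\na(\log D_0+V_0)|^2\,dx$.

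Second, I would reinterpret $\int \pa_t D_0(V_0-\VD)\,dx$ via the Poisson equation \eqref{1.limV}. Differentiating in time yields $\pa_t D_0 = (c_n e^{V_0}+c_p e^{-V_0})\pa_t V_0 - \lambda^2\Delta\pa_t V_0$. Integration by parts in the Laplacian term produces $\frac{d}{dt}\frac{\lambda^2}{2}\int|\na(V_0-\VD)|^2\,dx$, the boundary integrals vanishing because $V_0-\VD=0$ on $\Gamma_D$ and $\pa_\nu V_0=0$ on $\Gamma_N$ implies $\pa_\nu\pa_t V_0=0$. Using $f_0'(s)=se^s$ together with the thermal-equilibrium identities $\nD e^{V_0-\VD}=c_n e^{V_0}$ and $\pD e^{\VD-V_0}=c_p e^{-V_0}$, I identify
\[
  \int_\Omega(c_n e^{V_0}+c_p e^{-V_0})(V_0-\VD)\,\pa_t V_0\,dx = \frac{d}{dt}\int_\Omega\bigl[\nD f_0(V_0-\VD)+\pD f_0(\VD-V_0)\bigr]dx,
\]
where I check both signs carefully (the minus in $\pa_t\pD f_0(\VD-V_0)$ combines with the minus in $f_0'(\VD-V_0)=(\VD-V_0)e^{\VD-V_0}$ to give the correct contribution). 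Assembling all three pieces yields the desired identity; integrating from $0$ to $t$ and dropping half of the non-negative production term gives the claim.

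The main obstacle I anticipate is the rigorous justification of using $\log D_0$ as a test function, since one needs $D_0>0$ for this to be well-defined. For a smooth solution in the sense of the lemma I would exploit a comparison principle for $D_0 e^{V_0}$ (which satisfies a non-degenerate parabolic equation) to get a strict positive lower bound; for the existence proof of Theorem \ref{thm.lim} one would instead mimic Section \ref{sec.glob} and carry out this computation at the level of an approximate problem with truncation/regularization $T_k$ and $\delta$, producing additional error terms that are controlled by Gronwall and then shown to vanish in the limit $k\to\infty$, $\delta\to 0$. The rest (the boundary integrations) is routine given the mixed Dirichlet--Neumann compatibility inherited from \eqref{1.limbcD}--\eqref{1.limbcV} and the time-independence of $\VD$, $\DD$.
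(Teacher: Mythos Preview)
Your argument is correct and follows essentially the same route as the paper: both compute $\frac{d}{dt}H_0$ and reduce it to $\int(\log(D_0/\DD)+V_0-\VD)\,\pa_t D_0\,dx$, then insert \eqref{1.limD} and integrate by parts. The one noteworthy difference is that you exploit the identity $\log\DD+\VD=0$ (from $\DD=e^{-\VD}$) to obtain the exact equality $\frac{d}{dt}H_0=-\int D_0|\na(\log D_0+V_0)|^2\,dx$, whereas the paper writes the flux term as $-\int D_0\na(\log D_0+V_0)\cdot\na\big((\log D_0+V_0)-(\log\DD+\VD)\big)\,dx$, applies Young's inequality, and closes with Gronwall; since the remainder $\na(\log\DD+\VD)$ vanishes here, your identity is a clean streamlining of the paper's inequality, and it also explains why the constant in the statement can be taken as $H_0[D^I,V^I]$, independent of $T$.
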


\begin{proof}
We calculate the time derivative of the free energy, using the definitions
$n_0=\nD \exp(V_0-\VD )$, $p_0=\pD \exp(\VD -V_0)$:
\begin{align*}
  \frac{dH_0}{dt}[D_0,V_0] &= \int_\Omega\bigg(\pa_t D_0\log\frac{D_0}{\DD }
	+ (V_0-\VD )\pa_t\big(\nD  e^{V_0-\VD } - \pD  e^{\VD -V_0}\big)
	+ \lambda^2\na(V_0-\VD)\cdot\na\pa_t V_0\bigg)dx \\
	&= \int_\Omega\bigg(\pa_t D_0\log\frac{D_0}{\DD }
	+ (V_0-\VD )\pa_t\big(\nD e^{V_0-\VD } - \pD  e^{\VD -V_0}\big) \\
	&\phantom{xx}{}- (V_0-\VD )\pa_t(n_0-p_0-D_0+A(x))\bigg)dx \\
	&= \int_\Omega\bigg(\log\frac{D_0}{\DD }+V_0-\VD \bigg)\pa_t D_0 dx.
\end{align*}
Inserting the equation for $D_0$ and integrating by parts gives
\begin{align*}
  \frac{dH_0}{dt}[D_0,V_0] &= -\int_\Omega D_0\na(\log D_0+V_0)\cdot\na\big(
	(\log D_0+V_0)-(\log \DD +\VD )\big)dx \\
	&\le -\frac12\int_\Omega D_0|\na(\log D_0+V_0)|^2 dx
	+ \frac12\int_\Omega D_0|\na(\log\DD+\VD)|^2 dx. 
\end{align*}
The last term can be estimated from above by $CH_0[D_0,V_0]$. Then Gronwall's
lemma completes the proof.
\end{proof}

The free energy inequality yields the following uniform bounds:
\begin{align*}
  \|D_0\log D_0\|_{L^\infty(0,T;L^1(\Omega))}
	+ \|V_0\|_{L^\infty(0,T;H^1(\Omega))} &\le C, \\
	\|V_0\exp|V_0|\|_{L^\infty(0,T;L^1(\Omega))} 
	+ \big\|2\na\sqrt{D_0}+\sqrt{D_0}\na V_0\big\|_{L^2(Q_T)} &\le C,
\end{align*}
since $2\na\sqrt{D_0}+\sqrt{D_0}\na V_0 = \sqrt{D_0}\na(\log D_0+V_0)$
is uniformly bounded in $L^2(Q_T)$.

\subsection{Approximate problem}

Recalling $T_k(s)=\max\{0,\min\{k,s\}\}$ for $s\in\R$,
we introduce the approximate problem
\begin{align}
  & \pa_t D_{0,k} = \diver(\na D_{0,k} + T_k(D_{0,k}) \na V_{0,k}), \label{4.Dk} \\
	& \lambda^2\Delta V_{0,k} = c_n e^{V_{0,k}}-c_p e^{-V_{0,k}} - T_k(D_{0,k}) + A(x)
	\quad\mbox{in }\Omega,\ t>0, \label{4.Vk} \\
	& V_{0,k} = \VD \mbox{ on }\Gamma_D,\quad 
	\na V_{0,k}\cdot\nu=0\mbox{ on }\Gamma_N, \label{4.bcV} \\
	& (\na D_{0,k} + T_k(D_{0,k})\na V_{0,k})\cdot\nu = 0\mbox{ on }\pa\Omega,\ t>0,
	\quad D_{0,k}(0) = D^I\mbox{ in }\Omega.	\label{4.bcD}
\end{align}
The existence of weak solutions to this problem can be proved similarly
as in Section \ref{sec.glob}. The only difference is the derivation of
an estimate for $V_k$ in the Lax--Milgram argument because of the nonlinear
Poisson equation. As the nonlinearity is monotone, the
$L^2$ norm of $\na V_k$ can be bounded in terms of the $L^2$ norm of $D_{0,k}$,
like in the proof of Lemma \ref{lem.exk}. 

To pass to the limit $k\to\infty$, we need additional estimates.
First, the free energy inequality gives the following bound, 
which can be also directly
proved from the Poisson equation using the test function $V_{0,k}-\VD$:
\begin{equation}\label{4.VkexpVk}
  \|V_{0,k}\exp(|V_{0,k}|)\|_{L^1(Q_T)}\le C.
\end{equation}
Second, we introduce the following function and its convex conjugate:
\begin{equation}\label{4.g}
  g(s) = \exp(s^2/4), \quad g^*(t) = \sup_{s>0}(st-g(s))\quad\mbox{for }s,t>0.
\end{equation}

\begin{lemma}\label{lem.Phi}
Let $\Phi(u):=u\sqrt{\log u}$ for $u\ge 1$.
Then there exists $C>0$ such that $\Phi(g^*(t))\le Ct\log t$ as $t\to\infty$.
\end{lemma}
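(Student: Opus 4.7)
The plan is to compute the large-$t$ asymptotics of $g^*(t)$ explicitly and then estimate $\Phi(g^*(t))$ directly. Writing $s_0=s_0(t)$ for the maximizer in the definition of $g^*(t)$, the first-order condition $t=(s_0/2)\exp(s_0^2/4)$ gives the implicit relation
$$
  s_0 = 2\sqrt{\log(2t/s_0)},
$$
which already shows $s_0\to\infty$ as $t\to\infty$ and suggests $s_0\sim 2\sqrt{\log t}$.

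The first step is to turn this into rigorous bounds. For $t$ large enough so that $s_0\ge 1$, the implicit relation yields the one-sided bound $s_0\le 2\sqrt{\log(2t)}$. Using $\exp(s_0^2/4)=2t/s_0$, I rewrite
$$
  g^*(t) = s_0\, t - \exp(s_0^2/4) = s_0\, t - \frac{2t}{s_0} \le s_0\, t \le 2t\sqrt{\log(2t)}.
$$
(The lower bound $g^*(t)\ge 2t\sqrt{\log t}-t$ follows by plugging $s=2\sqrt{\log t}$ into $st-\exp(s^2/4)$, but only the upper bound is needed for the lemma.)

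The second step is to apply $\Phi$. From the upper bound on $g^*(t)$,
$$
  \log g^*(t) \le \log(2t) + \tfrac12\log\log(2t) \le C_1 \log t
$$
for all $t$ sufficiently large, hence $\sqrt{\log g^*(t)}\le \sqrt{C_1}\,\sqrt{\log t}$. Combining this with the bound on $g^*(t)$ gives
$$
  \Phi(g^*(t)) = g^*(t)\sqrt{\log g^*(t)} \le 2t\sqrt{\log(2t)}\cdot\sqrt{C_1\log t} \le C\, t\log t,
$$
as claimed. The only genuinely delicate point is extracting the asymptotic size of $s_0$ from the implicit relation; everything else is a direct substitution and a logarithmic estimate.
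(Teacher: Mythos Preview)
Your proof is correct and follows essentially the same route as the paper's: both arguments obtain the maximizer $s_0$ from the first-order condition $s_0\exp(s_0^2/4)=2t$, extract the bound $s_0\le C\sqrt{\log t}$ for large $t$, deduce $g^*(t)\le s_0 t\le Ct\sqrt{\log t}$, and then estimate $\Phi(g^*(t))$ by a straightforward logarithmic bound. Your write-up is slightly more explicit (e.g., the rewriting $g^*(t)=s_0t-2t/s_0$ and the aside on the lower bound ensuring $g^*(t)\ge 1$), but there is no substantive difference in strategy.
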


\begin{proof}
It holds that $g^*(t)=s_t t-\exp(s_t^2/4)$, where $s_t>0$ is uniquely determined by
$s_t\exp(s_t^2/4)=2t$ for $t>0$. Thus,
for sufficiently large $t>0$, there exists $C>0$
such that $s_t\le C\sqrt{\log t}$, which implies that
$g^*(t)\le s_t t\le Ct\sqrt{\log t}$ for ``large'' values of $t$.
By definition of $\Phi$, we have
$$
  \Phi(g^*(t)) \le Ct\sqrt{\log t}\big[\log\big(Ct\sqrt{\log t}\big)\big]^{1/2}
	\quad\mbox{as }t\to\infty,
$$
and consequently, 
$$
  \frac{\Phi(g^*(t))}{t\log t}\le \bigg(\frac{\log(Ct\sqrt{\log t})}{\log t}
	\bigg)^{1/2} \le C\quad\mbox{as }t\to\infty,
$$
which proves the lemma.
\end{proof}

Lemma \ref{lem.Phi} and the $L^\infty(0,T;L^1(\Omega))$ bound for $D_{0,k}\log D_{0,k}$
from Lemma \ref{lem.eilimit} show that
\begin{equation}\label{4.gstar}
  \int_0^T\int_\Omega\Phi(g^*(D_{0,k}))dxdt 
	\le C+C\int_0^T\int_\Omega D_{0,k}\log D_{0,k} dxdt \le C.
\end{equation}
Since $\Phi(u)/u\to\infty$ as $u\to\infty$, we conclude from the 
De la Vall\'ee--Poussin lemma that $g^*(D_{0,k})$ is uniformly integrable
in $Q_T$. 

\subsection{Limit $k\to\infty$}\label{sec.prooflim}

The weak convergence of the potential, proved in Section \ref{sec.limitk}, does not
allow us to conclude the convergence $\exp(V_{0,k})\to\exp(V_0)$ as $k\to\infty$.
By exploiting the $L^1(Q_T)$ bound for $V_{0,k}\exp(|V_{0,k}|)$
and the monotonicity of the nonlinear terms in the Poisson equation, 
we are able to prove the strong convergence of $(V_{0,k})$.

\begin{lemma}
It holds that $V_{0,k}\to V_0$ strongly in $L^p(Q_T)$ for any $1\le p<\infty$.
\end{lemma}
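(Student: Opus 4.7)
The plan is to prove that $(V_{0,k})$ is a Cauchy sequence in $L^1(Q_T)$ by testing the difference of the Poisson equations \eqref{4.Vk} at levels $k$ and $\ell$ with a carefully chosen nonlinear function of $W_{k,\ell}:=V_{0,k}-V_{0,\ell}$. Concretely, I would use $\varphi(W_{k,\ell})$ with $\varphi(s):=s/(1+s^2)^{1/4}$, for which one checks that $\varphi'(s)=(2+s^2)/\bigl(2(1+s^2)^{5/4}\bigr)\ge 0$. Since $\varphi$ vanishes on $\Gamma_D$ (because $V_{0,k}=V_{0,\ell}=\VD$ there) and both $V_{0,k}$ satisfy homogeneous Neumann conditions on $\Gamma_N$, integration by parts is legal, producing exactly the two nonnegative quantities displayed in the introduction: the diffusion term $\lambda^2\int \varphi'(W_{k,\ell})|\na W_{k,\ell}|^2$ and the sinh-monotonicity term $\int (c_n e^{V_{0,k}}-c_p e^{-V_{0,k}}-c_n e^{V_{0,\ell}}+c_p e^{-V_{0,\ell}})\varphi(W_{k,\ell})\ge 0$. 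All the work is then to show that the remaining term
\[
  R_{k,\ell} := \int_0^T\!\!\int_\Omega \bigl(T_k(D_{0,k})-T_\ell(D_{0,\ell})\bigr)\varphi(W_{k,\ell})\,dxdt
\]
tends to zero as $k,\ell\to\infty$.

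For this, I would apply the Fenchel--Young inequality in the parametrized form $ab\le \eta g(a)+\eta g^*(b/\eta)$ with $g$ as in \eqref{4.g} and arbitrary $\eta>0$, splitting $R_{k,\ell}$ into two pieces. The first piece is $\eta\int g(\varphi(W_{k,\ell}))$: since $\varphi(s)^2=s^2/(1+s^2)^{1/2}\le|s|$ for $|s|$ large, one has $g(\varphi(W_{k,\ell}))\le C(1+\exp(|W_{k,\ell}|/4))$, which is bounded in $L^1(Q_T)$ uniformly in $k,\ell$ by Cauchy--Schwarz and the bound \eqref{4.VkexpVk}, since $\exp(|V|/2)\le C(1+V\exp|V|)$. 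Hence this piece contributes $O(\eta)$. The second piece is $\eta\int g^*((T_k(D_{0,k})-T_\ell(D_{0,\ell}))/\eta)$: using $g^*(t)\sim t\sqrt{\log t}$ and $\Phi(g^*(t))\sim t\log t$ (cf.\ Lemma \ref{lem.Phi}), one verifies that $g^*(t/\eta)/\Phi(g^*(t))\to 0$ as $t\to\infty$, so the estimate \eqref{4.gstar} and the De la Vall\'ee--Poussin criterion yield uniform integrability of this family for each fixed $\eta$. Strong $L^1(Q_T)$ convergence of $T_k(D_{0,k})$ (obtained as in Section \ref{sec.limitk} by Aubin--Lions applied to \eqref{4.Dk}) gives a.e.\ convergence of $T_k(D_{0,k})-T_\ell(D_{0,\ell})$ to $0$, and Vitali's theorem converts this into $L^1$ convergence. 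Sending $k,\ell\to\infty$ and then $\eta\to 0$ shows $R_{k,\ell}\to 0$.

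Both nonnegative integrals on the left therefore vanish in the limit. From the sinh-term, strict monotonicity of $\sinh$ and the fact that $(1+W^2)^{-1/4}>0$ everywhere force $V_{0,k}-V_{0,\ell}\to 0$ in measure on $Q_T$; extracting a subsequence yields $V_{0,k}\to V_0$ almost everywhere. The upgrade to strong $L^p(Q_T)$ for any $p<\infty$ follows immediately from Vitali's theorem, because \eqref{4.VkexpVk} together with $\Psi(t):=t\exp t$ satisfying $\Psi(t)/t^p\to\infty$ ensures uniform integrability of $\{|V_{0,k}|^p\}$ for every finite $p$. The principal difficulty of the argument is the delicate balancing of the three nonlinearities $\varphi$, $g$ and $\Phi$: $\varphi'$ must be coercive enough to control $\na W_{k,\ell}$, $\varphi^2$ must grow slowly enough for $g\circ\varphi$ to be controlled by the exponential moment \eqref{4.VkexpVk}, and $g^*$ must grow slowly enough relative to $\Phi\circ g^*$ for uniform integrability; the specific choice $\varphi(s)=s(1+s^2)^{-1/4}$ and $g(s)=\exp(s^2/4)$ is precisely the combination for which all three constraints hold simultaneously.
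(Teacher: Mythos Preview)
Your argument is correct and follows the same overall strategy as the paper's proof: the same test function $\varphi(W)=W(1+W^2)^{-1/4}$, the same Fenchel--Young pair $g(s)=\exp(s^2/4)$ and $g^*$, and the same reliance on the $\sinh$-monotonicity term. The execution differs in two places. To show $R_{k,\ell}\to 0$, the paper splits the domain into $\{|V_{0,k}|+|V_{0,\ell}|\le L\}$ and its complement: on the first set it bounds $|\varphi|\le L$ and uses that $(T_k(D_{0,k}))$ is Cauchy in $L^1$, while on the complement it applies Fenchel--Young and exploits that the measure of this set is $O(1/L)$, together with the uniform integrability of $g^*(D_{0,k})$. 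Your parametrized inequality $ab\le\eta g(a)+\eta g^*(b/\eta)$ replaces this geometric split by an analytic one, which is a bit slicker; note however that strong $L^1$ convergence of $T_k(D_{0,k})$ gives a.e.\ convergence only along subsequences, so you should invoke convergence in \emph{measure} (which does follow from $L^1$ convergence of the full double sequence) when applying Vitali. For the conclusion, the paper uses the addition formula $\sinh a-\sinh b=2\sinh\tfrac{a-b}{2}\cosh\tfrac{a+b}{2}$ together with $\cosh\ge 1$ and $\sinh(a)/a\ge c_p|a|^p$ to obtain directly that $(V_{0,k})$ is Cauchy in every $L^p(Q_T)$, while you go through convergence in measure and then upgrade via Vitali and the moment bound \eqref{4.VkexpVk}; both routes are valid.
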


\begin{proof}
We take the difference of the Poisson equation \eqref{4.Vk}, satisfied by
$V_{0,k}$ and $V_{0,\ell}$ for some $k$, $\ell\in\N$:
$$
  -\lambda^2\Delta(V_{0,k}-V_{0,\ell}) + c_n(e^{V_{0,k}}-e^{V_{0,\ell}})
	+ c_p(-e^{-V_{0,k}}+e^{-V_{0,\ell}}) = T_k(D_{0,k})-T_\ell(D_{0,\ell}).
$$
Then we choose the test function $(1+(V_{0,k}-V_{0,\ell})^2)^{-1/4}
(V_{0,k}-V_{0,\ell})$ in the weak formulation of the previous equation:
\begin{align*}
  \lambda^2\int_0^T&\int_\Omega\frac{2+(V_{0,k}-V_{0,\ell})^2}{2
	(1+(V_{0,k}-V_{0,\ell})^2)^{5/4}}|\na(V_{0,k}-V_{0,\ell})|^2 dxdt \\
	&\phantom{xx}{}+ c_n\int_0^T\int_\Omega\frac{(V_{0,k}-V_{0,\ell})
	(e^{V_{0,k}}-e^{V_{0,\ell}})}{(1+(V_{0,k}-V_{0,\ell})^2)^{1/4}}dxdt \\
	&\phantom{xx}{}+ c_p\int_0^T\int_\Omega\frac{(V_{0,k}-V_{0,\ell})
	(-e^{-V_{0,k}}+e^{-V_{0,\ell}})}{(1+(V_{0,k}-V_{0,\ell})^2)^{1/4}}dxdt \\
	&= \int_0^T\int_\Omega\frac{(V_{0,k}-V_{0,\ell})}{(1+(V_{0,k}-V_{0,\ell})^2)^{1/4}}
	(T_k(D_{0,k})-T_\ell(D_{0,\ell}))dxdt.
\end{align*}
Using
\begin{align*}
  c_n(&V_{0,k}-V_{0,\ell})(e^{V_{0,k}}-e^{V_{0,\ell}})
	+ c_p(V_{0,k}-V_{0,\ell})(-e^{-V_{0,k}}+e^{-V_{0,\ell}}) \\
	&\ge \min\{c_n,c_p\}(V_{0,k}-V_{0,\ell})\big((e^{V_{0,k}}-e^{V_{0,\ell}})
	+ (-e^{-V_{0,k}}+e^{-V_{0,\ell}})\big) \\
	&= 2\min\{c_n,c_p\}(V_{0,k}-V_{0,\ell})(\sinh(V_{0,k})-\sinh(V_{0,\ell})),
\end{align*}
we find that
\begin{align}\label{4.V0k}
 \lambda^2\int_0^T&\int_\Omega\frac{2+(V_{0,k}-V_{0,\ell})^2}{2
	(1+(V_{0,k}-V_{0,\ell})^2)^{5/4}}|\na(V_{0,k}-V_{0,\ell})|^2 dxdt \\
	&\phantom{xx}{}+ 2\min\{c_n,c_p\}\int_0^T\int_\Omega
	\frac{(V_{0,k}-V_{0,\ell})(\sinh(V_{0,k})-\sinh(V_{0,\ell}))}{
	(1+(V_{0,k}-V_{0,\ell})^2)^{1/4}}dxdt \nonumber \\
	&\le \int_0^T\int_\Omega\frac{|V_{0,k}-V_{0,\ell}|}{(1+(V_{0,k}-V_{0,\ell})^2)^{1/4}}
	|T_k(D_{0,k})-T_\ell(D_{0,\ell})|dxdt. \nonumber
\end{align}

We claim that the right-hand side converges to zero if $k$, $\ell\to\infty$.
For this, we decompose the right-hand side for some $L>1$ into two parts:
\begin{align*}
  J_1 &:= \int_0^T\int_{\{|V_{0,k}|+|V_{0,\ell}|\le L\}}
	\frac{|V_{0,k}-V_{0,\ell}|}{(1+(V_{0,k}-V_{0,\ell})^2)^{1/4}}
	|T_k(D_{0,k})-T_\ell(D_{0,\ell})|dxdt, \\
	J_2 &:= \int_0^T\int_{\{|V_{0,k}|+|V_{0,\ell}| > L\}}
	\frac{|V_{0,k}-V_{0,\ell}|}{(1+(V_{0,k}-V_{0,\ell})^2)^{1/4}}
	|T_k(D_{0,k})-T_\ell(D_{0,\ell})|dxdt.
\end{align*}
The integral $J_1$ converges to zero as $k$, $\ell\to\infty$, since
\begin{align}\label{4.J1}
  J_1 &\le \int_0^T\int_{\{|V_{0,k}|+|V_{0,\ell}|\le L\}}|V_{0,k}-V_{0,\ell}|
	|T_k(D_{0,k})-T_\ell(D_{0,\ell})|dxdt \\
	&\le L\int_0^T\int_\Omega|T_k(D_{0,k})-T_\ell(D_{0,\ell})|dxdt, \nonumber
\end{align}
and the strong convergence $D_{0,k}\to D_0$ strongly in $L^1(Q_T)$ implies that
$(D_{0,k})$ and also $(T_k(D_{0,k}))$ is a Cauchy sequence.
The difficult part is the limit $k$, $\ell\to\infty$ in $J_2$.

We infer from the Fenchel--Young inequality that
\begin{align*}
  J_2 &\le \int_0^T\int_{\{|V_{0,k}|+|V_{0,\ell}| > L\}}
	\big(1+|V_{0,k}-V_{0,\ell}|^{1/2}\big)\big(T_k(D_{0,k})+T_\ell(D_{0,\ell})\big)dxdt \\
	&\le 2\int_0^T\int_{\{|V_{0,k}|+|V_{0,\ell}| > L\}}
	g(1+|V_{0,k}-V_{0,\ell}|^{1/2})dxdt \nonumber \\
	&\phantom{xx}{}+ \int_0^T\int_{\{|V_{0,k}|+|V_{0,\ell}| > L\}}g^*(T_k(D_{0,k}))dxdt
	+ \int_0^T\int_{\{|V_{0,k}|+|V_{0,\ell}| > L\}}g^*(T_\ell(D_{0,\ell}))dxdt 
	\nonumber \\
	&=: J_{21} + J_{22} + J_{23}, \nonumber
\end{align*}
where $g$ and $g^*$ are defined in \eqref{4.g}. Elementary inequalities lead to
\begin{align*}
  J_{21} &\le 2\int_0^T\int_{\{|V_{0,k}|+|V_{0,\ell}| > L\}}
	\exp\bigg(\frac12(1+|V_{0,k}-V_{0,\ell}|)\bigg)dxdt \\
	&\le2 e^{1/2}\int_0^T\int_{\max\{|V_{0,k}|,|V_{0,\ell}|\} > L/2}
	e^{\max\{|V_{0,k}|,|V_{0,\ell}|\}}dxdt \\
	&\le \frac{4e^{1/2}}{L}\int_0^T\int_{\max\{|V_{0,k}|,|V_{0,\ell}|\} > L/2}
	\max\{|V_{0,k}|,|V_{0,\ell}|\}e^{\max\{|V_{0,k}|,|V_{0,\ell}|\}}dxdt
	\le \frac{C}{L},
\end{align*}
taking into account estimate \eqref{4.VkexpVk} in the last step.

Since $(V_{0,k})$ is bounded in $L^1(Q_T)$, there exists $C>0$ such that
for all $k$, $\ell\ge 1$, $\operatorname{meas}\{|V_{0,k}|+|V_{0,\ell}| > L\}\le C/L$.
We have already shown that \eqref{4.gstar} implies the uniform integrability of 
$g^*(D_k)$ in $Q_T$. 
Thus, for any $\eta>0$, there exists $L_\eta>1$ such that for all $L>L_\eta$,
$$
  \sup_{k,\ell\in\N}\int_0^T\int_{\{|V_{0,k}|+|V_{0,\ell}| > L\}}
	g^*(T_k(D_{0,k}))dxdt \le \eta,
$$
which means that $J_{22}+J_{23}\le 2\eta$ for all $k$, $\ell\in\N$.
This information as well as the estimate for $J_{21}$ yield
$J_2\le 3\eta$ for sufficiently large $L>1$. Then, together with estimate \eqref{4.J1}
for $J_1$, we obtain
$$
  \limsup_{k,\ell\to\infty}
	\int_0^T\int_\Omega\frac{|V_{0,k}-V_{0,\ell}|}{(1+(V_{0,k}-V_{0,\ell})^2)^{1/4}}
	|T_k(D_{0,k})-T_\ell(D_{0,\ell})|dxdt\le 4\eta
$$
for all $L>L_\eta$. Since $\eta>0$ is arbitrary, we conclude that
$$
  \lim_{k,\ell\to\infty}\int_0^T\int_\Omega\frac{|V_{0,k}-V_{0,\ell}|}{
	(1+(V_{0,k}-V_{0,\ell})^2)^{1/4}}|T_k(D_{0,k})-T_\ell(D_{0,\ell})|dxdt = 0.
$$

We perform the limit $k$, $\ell\to\infty$ in \eqref{4.V0k}, which shows that
$$
  \lim_{k,\ell\to\infty}\int_0^T\int_\Omega
	\frac{(V_{0,k}-V_{0,\ell})(\sinh(V_{0,k})-\sinh(V_{0,\ell}))}{
	(1+(V_{0,k}-V_{0,\ell})^2)^{1/4}}dxdt = 0.
$$
We deduce from the trigonometric addition formula
$$
  \sinh(a)-\sinh(b) = 2\sinh\frac{a-b}{2}\cosh\frac{a+b}{2}\quad\mbox{for }a,b\in\R
$$
that
$$
  \lim_{k,\ell\to\infty}\int_0^T\int_\Omega
	\frac{\sinh((V_{0,k}-V_{0,\ell})/2)}{V_{0,k}-V_{0,\ell}}\,
	\frac{\cosh((V_{0,k}+V_{0,\ell})/2)}{(1+(V_{0,k}-V_{0,\ell})^2)^{1/4}}
	(V_{0,k}-V_{0,\ell})^2 dxdt = 0.
$$
Taking into account that $\cosh(a)\ge 1$ for $a\in\R$ and that for every $0<p<\infty$,
there exists $c_p>0$ such that $\sinh(a)/a\ge c_p|a|^p$ for $a\in\R$, we conclude
that $(V_{0,k})$ is a Cauchy sequence in $L^p(Q_T)$ for every $p<\infty$
and consequently, $(V_{0,k})$ is convergent in that space.
\end{proof}

We proceed with the proof of Theorem \ref{thm.lim}. 
Estimate \eqref{4.VkexpVk} shows that
$$
  \int_0^T\int_\Omega\Psi(e^{\pm V_{0,k}})dxdt \le C,
$$
where $\Psi(u)=u\log u$ for $u\ge 0$. By the De la Vall\'ee--Poussin theorem,
$(\exp(\pm V_{0,k}))$ is uniformly integrable. The strong convergence $V_{0,k}\to V_0$
in $L^p(Q_T)$ implies, up to a subsequence, that 
$\exp(\pm V_{0,k})\to \exp(\pm V_0)$ a.e.\ in $Q_T$. Thus, 
$$
  \exp(\pm V_{0,k})\to \exp(\pm V_0) \quad\mbox{strongly in }L^1(Q_T).
$$
The proof in Section \ref{sec.limitk} shows that $D_{0,k}\to D_0$ strongly
in $L^1(Q_T)$, $\na V_{0,k}\rightharpoonup\na V_0$ and
$\na D_{0,k}+T_k(D_{0,k})\na V_{0,k}\rightharpoonup \na D_0+D_0\na V_0$ 
weakly in $L^1(Q_T)$. 
These convergence allows us to perform the limit $k\to\infty$ in 
\eqref{4.Dk}--\eqref{4.bcD}, showing that $(D_0,V_0)$ is a weak solution
to \eqref{1.limD}--\eqref{1.limbcV}.


\subsection{Weak-strong uniqueness for the limit problem}

We continue by proving the weak-strong uniqueness property. Let
$(D_0,V_0)$ be a bounded strong solution and $(D,V)$ be a weak solution
to \eqref{1.limD}--\eqref{1.limbcV} satisfying the assumptions of Theorem \ref{thm.lim}.
The proof is based on the relative free energy
\begin{align*}
  & H[(D,V)|(D_0,V_0)] = H_1[D|D_0] + H_2[V|V_0], \quad\mbox{where} \\
	& H_1[D|D_0] = \int_\Omega\bigg(D\log\frac{D}{D_0} - D + D_0\bigg)dx, \\
	& H_2[V|V_0] = \int_\Omega\bigg(\frac{\lambda^2}{2}|\na(V-V_0)|^2
	+ c_n e^{V_0} f_0(V-V_0) + c_p e^{-V_0} f_0(V_0-V)\bigg)dx,
\end{align*}
recalling that $f_0(s)=(s-1)e^s+1$ for $s\in\R$. The proof is divided into several 
steps. 

{\em Step 1: Estimates for the potential $V-V_0$.} 
We wish to derive a bound for $V-V_0$ in terms of the relative 
free energy $H_1[D|D_0]$.
To this end, we use the test function $V-V_0$ in the weak formulation of the
difference of the equations satisfied by $V$ and $V_0$, respectively:
\begin{align*}
  \lambda^2&\int_\Omega|\na(V-V_0)|^2 dx 
	+ \int_\Omega\big(c_n(e^{V}-e^{V_0}) - c_p(e^{-V}-e^{-V_0})\big)(V-V_0)dx \\
	&= \int_\Omega(V-V_0)(D-D_0)dx.
\end{align*}
Since $V_0$ is bounded by assumption, we can estimate the second integral 
on the left-hand side according to 
\begin{align*}
  \big(c_n&(e^{V}-e^{V_0}) - c_p(e^{-V}-e^{-V_0})\big)(V-V_0) \\
	&= c_n e^{V_0}(e^{V-V_0}-1)(V-V_0) + c_p e^{-V_0}(e^{V_0-V}-1)(V_0-V) \\
	&\ge c|V-V_0|(e^{|V-V_0|}-1),
\end{align*}
where $c>0$ depends on the $L^\infty(Q_T)$ norm of $V_0$. We infer that
\begin{equation}\label{4.aux2}
  \int_\Omega|\na(V-V_0)|^2 dx + \int_\Omega|V-V_0|(e^{|V-V_0|}-1)dx
	\le C\int_\Omega|V-V_0||D-D_0|dx.
\end{equation}
Let $g_\xi(s) = \xi s(e^s-1)$ for $\xi>0$, $s\ge 0$ and $g_\xi^*$ be its convex
conjugate. We deduce from the Fenchel--Young inequality and Lemma \ref{lem.conj} 
in the Appendix that
\begin{align*}
  \int_\Omega&|V-V_0||D-D_0|dx \le \int_\Omega g_\xi(|V_0-V|)dx
	+ \int_\Omega g_\xi^*(|D-D_0|)dx \\
	&\le \xi\int_\Omega|V-V_0|(e^{|V-V_0|}-1)dx
	+ \xi\int_\Omega\frac{(\log(1+|D-D_0|/\xi))^2}{1+\log(1+|D-D_0|/\xi)}
	\bigg(1 + \frac{|D-D_0|}{\xi}\bigg)dx.
\end{align*}
For $0<\xi<1$, the first term on the right-hand side can be absorbed by the
left-hand side of \eqref{4.aux2}, leading to
\begin{align*}
  \int_\Omega&|\na(V-V_0)|^2 dx + \int_\Omega|V-V_0|(e^{|V-V_0|}-1)dx \\
	&\le C \xi\int_\Omega\frac{(\log(1+|D-D_0|/\xi))^2}{1+\log(1+|D-D_0|/\xi)}
	\bigg(1 + \frac{|D-D_0|}{\xi}\bigg)dx.
\end{align*}

We claim that the right-hand side can be controlled by $H_1[D|D_0]$. In fact,
we claim that for $0<\gamma_0\le D_0\le \gamma_1$ and $D\ge 0$,
\begin{equation}\label{4.ineq}
  \frac{(\log(1+|D-D_0|/\xi))^2}{1+\log(1+|D-D_0|/\xi)}
	\bigg(1 + \frac{|D-D_0|}{\xi}\bigg) \le C(\xi,\gamma_0,\gamma_1)
	\bigg(D\log\frac{D}{D_0}-D+D_0\bigg).
\end{equation}
This can be seen by analyzing the behavior of both sides of \eqref{4.ineq}
for $D\to 0$, $D\to D_0$, and $D\to\infty$. For $D\to 0$, the left-hand side
of \eqref{4.ineq} remains bounded, while the right-hand side is uniformly positive.
For $D\to\infty$, both sides diverge like $D\log D$. Finally, for $D\to D_0$,
a Taylor expansion shows that both sides tend to zero quadratically in $|D-D_0|$. 
We conclude that
\begin{equation}\label{4.estH1}
  \int_\Omega|\na(V-V_0)|^2dx + \int_\Omega|V-V_0|(e^{|V-V_0|}-1)dx
	\le C H_1[D|D_0].
\end{equation}

{\em Step 2: Estimate for $(dH_1/dt)[D|D_0]$.} We differentiate $H_1[D|D_0]$ with
respect to time:
\begin{align*}
  \frac{dH_1}{dt}[D|D_0] &= \bigg\langle\pa_t D,\log\frac{D}{D_0}\bigg\rangle
	+ \bigg\langle\pa_t D_0,1-\frac{D}{D_0}\bigg\rangle \\
	&= -\int_\Omega\na\log\frac{D}{D_0}\cdot(\na D+D\na V)dx
	- \int_\Omega\na\bigg(1-\frac{D}{D_0}\bigg)\cdot(\na D_0+D_0\na V_0)dx,
\end{align*}
Elementary computations lead to
\begin{equation}\label{4.dH1dt}
  \frac{dH_1}{dt}[D|D_0] = -\int_\Omega D\bigg|\na\log\frac{D}{D_0}\bigg|^2 dx
	- \int_\Omega D\na\log\frac{D}{D_0}\cdot\na(V-V_0)dx.
\end{equation}

To reformulate the last integral, 
we use $V-V_0$ as a test function in the weak formulation
of the difference of the equations satisfied by $D$ and $D_0$, respectively:
\begin{align*} 
  \langle\pa_t&(D-D_0),V-V_0\rangle + \int_\Omega\na(V-V_0)\cdot\na(D-D_0)dx
	+ \int_\Omega D|\na(V-V_0)|^2 dx \\
	&= -\int_\Omega(D-D_0)\na V_0\cdot\na(V-V_0)dx. 
\end{align*}
Rewriting the second term on the left-hand side,
\begin{align*}
  \int_\Omega&\na(V-V_0)\cdot\na(D-D_0)dx
	= \int_\Omega\na(V-V_0)\cdot\bigg(\bigg(\frac{D}{D_0}-1\bigg)\na D_0
	+ D\na\log\frac{D}{D_0}\bigg)dx \\
	&= \int_\Omega(D-D_0)\frac{\na D_0}{D_0}\cdot\na(V-V_0)dx
	+ \int_\Omega D\na(V-V_0)\cdot\na\log\frac{D}{D_0}dx,
\end{align*}
we find that
\begin{align*}
  \langle\pa_t&(D-D_0),V-V_0\rangle + \int_\Omega D|\na(V-V_0)|^2 dx \\
	&= -\int_\Omega(D-D_0)\na(V-V_0)\cdot\bigg(\frac{\na D_0}{D_0}+\na V_0\bigg)dx
	- \int_\Omega D\na(V-V_0)\cdot\na\log\frac{D}{D_0}dx.
\end{align*}
We add this expression to \eqref{4.dH1dt}:
\begin{align*}
  \frac{dH_1}{dt}&[D|D_0] + \langle\pa_t(D-D_0),V-V_0\rangle 
	+ \int_\Omega D\bigg|\na\log\frac{D}{D_0}\bigg|^2 dx 
	+ \int_\Omega D|\na(V-V_0)|^2 dx \\
	&{}+ 2\int_\Omega D\na(V-V_0)\cdot\na\log\frac{D}{D_0}dx
	= -\int_\Omega(D-D_0)\na(V-V_0)\cdot\bigg(\frac{\na D_0}{D_0}+\na V_0\bigg)dx.
\end{align*}
The last three terms of the left-hand side can be written as a square, leading to
\begin{align}\label{4.aux3}
  \frac{dH_1}{dt}&[D|D_0] + \langle\pa_t(D-D_0),V-V_0\rangle 
	+ \int_\Omega D\bigg|\na\bigg(\log\frac{D}{D_0}+V-V_0\bigg)\bigg|^2 dx \\
	&= -\int_\Omega(D-D_0)\na(V-V_0)\cdot\bigg(\frac{\na D_0}{D_0}+\na V_0\bigg)dx
	\nonumber
\end{align}

We estimate the right-hand side by decomposing the integral in two terms,
$$
  I_\pm := -\int_\Omega(D-D_0)_\pm\na(V-V_0)\cdot
	\bigg(\frac{\na D_0}{D_0}+\na V_0\bigg)dx,
$$
recalling that $z_+=\max\{0,z\}$ and $z_-=\min\{0,z\}$. 
In the integral $I_+$, we consider ``large'' values of $D$, i.e.\ 
$D>c:=\inf_{Q_T}D_0>0$,
while ``small'' values of $D$, i.e.\ $0\le D\le c$, are taken into account in $I_-$.

First, using Young's inequality and the assumptions
$\na\log D_0$, $\na V_0\in L^\infty(Q_T)$:
\begin{align}\label{4.I+est}
  I_+ &= -\int_\Omega(D-D_0)_+\na\bigg(\log\frac{D}{D_0}+V-V_0\bigg)\cdot
	\bigg(\frac{\na D_0}{D_0}+\na V_0\bigg)dx \\
	&\phantom{xx}{}+ \int_\Omega(D-D_0)_+\na\log\frac{D}{D_0}\cdot
	\bigg(\frac{\na D_0}{D_0}+\na V_0\bigg)dx \nonumber \\
	&\le \frac12\int_\Omega D\bigg|\na\bigg(\log\frac{D}{D_0}+V-V_0\bigg)\bigg|^2 dx 
	+ C(D_0,V_0)\int_\Omega\frac{1}{D}(D-D_0)_+^2 dx \nonumber \\
	&\phantom{xx}{}+ \int_\Omega(D-D_0)_+\na\log\frac{D}{D_0}\cdot
	\bigg(\frac{\na D_0}{D_0}+\na V_0\bigg)dx. \nonumber
\end{align}
Observe that the positive part avoids the singularity since
$(D-D_0)_+^2/D=0$ if $D\le c$. Taylor's formula yields
$$
  D\log\frac{D}{D_0}-D+D_0 
	= D_0\bigg(\frac{D}{D_0}\bigg(\log\frac{D}{D_0}-1\bigg)+1\bigg)
	= \frac{D_0}{2\xi}\bigg(\frac{D}{D_0}-1\bigg)^2
  \ge \frac{D_0^2}{2D}\bigg(\frac{D}{D_0}-1\bigg)^2
$$
for some $1\le\xi\le D/D_0$. Then the second integral on the right-hand side of 
\eqref{4.I+est} becomes
$$
  \int_\Omega\frac{1}{D}(D-D_0)_+^2 dx 
	= \int_{\{D>D_0\}}\frac{D_0^2}{D}\bigg(\frac{D}{D_0}-1\bigg)^2 dx
	\le 2H_1[D|D_0].
$$
The last integral in \eqref{4.I+est} is formulated as
\begin{align*}
  I_1 &:= \int_\Omega\bigg(1-\frac{D_0}{D}\bigg)_+\na\frac{D}{D_0}
	\cdot(\na D_0+D_0\na V_0)dx \\
	&= \int_\Omega\na F\bigg(\frac{D}{D_0}\bigg)\cdot(\na D_0+D_0\na V_0)dx,
\end{align*}
where $F(s) = (s-1-\log s)\mathrm{1}_{\{s>1\}}\ge 0$ for $s>0$. 
The no-flux boundary conditions allow us to integrate by parts:
\begin{equation*}
  I_1	= -\big\langle\diver(\na D_0+D_0\na V_0),F(D/D_0)\big\rangle
	= -\langle \pa_t D_0,F(D/D_0)\rangle.
\end{equation*}
By our assumption $\pa_t D_0\in L^1(0,T;L^\infty(\Omega))$ and the property
$F(s)\le s(\log s-1)+1$ for $s\ge 0$, we conclude that there exists
$\gamma_1\in L^1(0,T)$ such that
$$
  I_1 \le \gamma_1(t)\int_\Omega F(D/D_0)dx 
	\le \gamma_1(t)H_1[D|D_0].
$$
Therefore, setting $\gamma_2(t)=\gamma_1(t)+2C(D_0,V_0)$,
we deduce from \eqref{4.I+est} that
\begin{equation}\label{4.I+}
  I_+ \le \frac12\int_\Omega D\bigg|\na\bigg(\log\frac{D}{D_0}+V-V_0\bigg)\bigg|^2 dx 
	+ \gamma_2(t) H_1[D,D_0].
\end{equation}

Now, we estimate $I_-$. By our assumptions on $D_0$ and $V_0$, we compute
\begin{align*}
  I_- &\le C(D_0,V_0)\int_\Omega|(D-D_0)_-||\na(V-V_0)|dx \\
	&\le C(D_0,V_0)\int_\Omega(D-D_0)_-^2 dx + C(D_0,V_0)\int_\Omega|\na(V-V_0)|^2 dx.
\end{align*}
The first integral can be bounded from above by $CH_1[D|D_0]$ since
$D\log(D/D_0)-D+D_0\ge(D-D_0)^2/(2D_0)$ for $D<D_0$. 
The second integral is estimated from above by $CH_2[V,V_0]$. We conclude that
\begin{equation}\label{4.I-}
  I_- \le C(D_0,V_0)\big(H_1[D|D_0] + H_2[V|V_0]\big).
\end{equation}

Inserting estimates \eqref{4.I+} for $I_+$ and \eqref{4.I-} for $I_-$ into
\eqref{4.aux3}, we obtain for some $\gamma_3\in L^1(0,T)$,
\begin{align}\label{4.aux5}
  \frac{dH_1}{dt}&[D|D_0] + \langle\pa_t(D-D_0),V-V_0\rangle 
	+ \frac12\int_\Omega D\bigg|\na\bigg(\log\frac{D}{D_0}+V-V_0\bigg)\bigg|^2 dx \\
	&\le \gamma_3(t)H[(D,V)|(D_0,V_0)]. \nonumber
\end{align}

{\em Step 3: Estimate of $\langle\pa_t(D-D_0),V-V_0\rangle$.}
The difference $V-V_0$ satisfies the Poisson equation
$$
  D-D_0 = -\lambda^2\Delta(V-V_0) + c_n(e^V-e^{V_0}) - c_p(e^{-V}-e^{-V_0}).
$$
Thus, replacing $D-D_0$ in $\langle\pa_t(D-D_0),V-V_0\rangle$
by the right-hand side and integrating by parts in the term
involving $\Delta(V-V_0)$ leads to
\begin{align}\label{4.aux4}
  \langle\pa_t(D-D_0),V-V_0\rangle
	&= \frac{\lambda^2}{2}\frac{d}{dt}\int_\Omega|\na(V-V_0)|^2 dx \\
	&\phantom{xx}{}+ c_n\langle\pa_t(e^V-e^{V_0}),V-V_0\rangle 
	- c_p\langle\pa_t(e^{-V}-e^{-V_0}),V-V_0\rangle. \nonumber
\end{align} 
The second term on the right-hand side can be reformulated according to
\begin{align*}
  \langle\pa_t&(e^V-e^{V_0}),V-V_0\rangle 
	= \langle\pa_t f_0(V-V_0),e^{V_0}\rangle 
	+ \langle\pa_t(e^{V_0}),(V-V_0)(e^{V-V_0}-1)\rangle \\
	&= \frac{d}{dt}\int_\Omega e^{V_0} f_0(V-V_0)dx
	+ \big\langle\pa_t(e^{V_0}),(V-V_0)(e^{V-V_0}-1)-f_0(V-V_0)\big\rangle \\
	&= \frac{d}{dt}\int_\Omega e^{V_0} f_0(V-V_0)dx
	+ \big\langle\pa_t(e^{V_0}),e^{V-V_0}-(V-V_0)-1\big\rangle.
\end{align*}
Our assumption on $V_0$ implies that $\pa_t(e^{V_0})\in L^1(0,T;L^\infty(\Omega))$
such that
\begin{align*}
  \langle\pa_t&(e^V-e^{V_0}),V-V_0\rangle 
	\ge \frac{d}{dt}\int_\Omega e^{V_0} f_0(V-V_0)dx
	- \gamma_4(t)\int_\Omega(e^{V-V_0}-(V-V_0)-1)dx
\end{align*}
for some nonnegative function $\gamma_4(t)$. It holds that
$$
  e^s-s-1\le C|s|(e^{|s|}-1)\quad\mbox{for }s\in\R,
$$
since both sides behave like $s^2$ as $s\to 0$ and for $|s|\to\infty$,
the right-hand side tends to infinity faster than the left-hand side. 
Therefore, we deduce from \eqref{4.estH1} that
$$
  \langle\pa_t(e^V-e^{V_0}),V-V_0\rangle 
	\ge \frac{d}{dt}\int_\Omega e^{V_0} f_0(V-V_0)dx
	- (\gamma_4(t)+C)H[(D,V)|(D_0,V_0)].
$$
In a similar way, it follows that
$$
  -\langle\pa_t(e^{-V}-e^{-V_0}),V-V_0\rangle 
	\ge \frac{d}{dt}\int_\Omega e^{-V_0} f_0(V_0-V)dx
	- (\gamma_4(t)+C)H[(D,V)|(D_0,V_0)].
$$
Inserting these inequalities into \eqref{4.aux4} and taking into account the
definition $H_2$ shows that, for some $\gamma_5\in L^1(0,T)$,
$$
  \langle\pa_t(D-D_0),V-V_0\rangle \ge \frac{dH_2}{dt}[V|V_0]
	- \gamma_5(t)H[(D,V)|(D_0,V_0)].
$$

{\em Step 4: Conclusion.} We infer from the previous inequality and \eqref{4.aux5}
that
$$
  \frac{dH}{dt}[(D,V)|(D_0,V_0)] \le (\gamma_3(t)+\gamma_5(t))H[(D,V)|(D_0,V_0)]
$$
for $0<t<T$. As $\gamma_3+\gamma_5\in L^1(0,T)$ and $H[(D,V)|(D_0,V_0)]=0$ at $t=0$,
Gronwall's lemma implies that $H[(D,V)|(D_0,V_0)](t)=0$ for $0<t<T$, which
gives $D=D_0$ and $V=V_0$ in $Q_T$ and finishes the proof.


\section{Proof of Theorem \ref{thm.eps}}\label{sec.eps}

Since there is no factor $\eps$ in the equation for $D_\eps$, we can proceed
as in the existence proof and show, using the Aubin--Lions lemma, that
up to a subsequence,
\begin{equation}\label{5.Deps}
  D_\eps\to D_0 \quad\mbox{strongly in }L^1(Q_T)\mbox{ as }\eps\to 0.
\end{equation}
The assumption on the boundary data $\nD $ and $\pD $ implies that
$\Lambda_\eps=0$ (see \eqref{1.lambda}). Therefore, by the free energy inequality
\eqref{1.ei},
$$
  \int_0^T\int_\Omega \big(n_\eps|\na(\log n_\eps-V_\eps)|^2 
	+ p_\eps|\na(\log p_\eps+V_\eps)|^2\big)dxdt \le H^I\eps.
$$
It follows that
$$
  2\na\sqrt{n_\eps}-\sqrt{n_\eps}\na V_\eps
  = \sqrt{n_\eps}\na(\log n_\eps-V_\eps)\to 0 \quad
	\mbox{strongly in }L^2(Q_T).
$$
Furthermore, since $\sqrt{n_\eps}$
is uniformly bounded in $L^\infty(0,T;L^2(\Omega))$, we find that
$$
  \na n_\eps - n_\eps\na V_\eps 
	= \sqrt{n_\eps}\big(2\na\sqrt{n_\eps}-\sqrt{n_\eps}\na V_\eps\big)\to 0
	\quad\mbox{strongly in }L^1(Q_T). 
$$

By estimate \eqref{v.infty}, which holds in two space dimensions,
$$
  \|V_\eps\|_{L^\infty(\Omega)}
	\le C\big(1+\|(n_\eps-p_\eps-D_\eps+A(x))\log|n_\eps-p_\eps-D_\eps+A(x)|
	\|_{L^1(\Omega)}\big).
$$
In view of inequality \eqref{1.ei}, 
this gives a uniform $L^\infty(Q_T)$ bound for $V_\eps$. We infer that
$$
  \na(n_\eps e^{-V_\eps}) 
	= e^{-V_\eps}(\na n_\eps - n_\eps\na V_\eps)\to 0\quad\mbox{strongly in }L^1(Q_T).
$$
By Poincar\'e's inequality, since $\nD e^{-\VD}=c_n=\mbox{const.}$,
\begin{equation}\label{5.neps}
  \|n_\eps e^{-V_\eps}-\nD e^{-\VD}\|_{L^1(Q_T)}
	\le C\|\na(n_\eps e^{-V_\eps})\|_{L^1(Q_T)} + C\|\na(\nD e^{-\VD})\|_{L^1(Q_T)}
	\to 0.
\end{equation}
Similarly, it follows that
\begin{equation}\label{5.peps}
  p_\eps e^{V_\eps}-\pD e^{\VD}\to 0 \quad\mbox{strongly in }L^1(Q_T).
\end{equation}

Next, we reformulate the Poisson equation as
$$
  \lambda^2\Delta V_\eps = e^{V_\eps}(n_\eps e^{-V_\eps})
	- e^{-V_\eps}(p_\eps e^{V_\eps}) - D_\eps + A
	= e^{V_\eps}(\nD e^{-\VD}) - e^{-V_\eps}(\pD e^{\VD}) - D_\eps + A + E_\eps,
$$
where
$$
  E_\eps := e^{V_\eps}(n_\eps e^{-V_\eps}-\nD e^{-\VD})
	- e^{-V_\eps}(p_\eps e^{V_\eps} - \pD e^{\VD})
$$
is an error term. Then $V_\eps-V_{\eps'}$ for some $\eps'>0$ solves
$$
  \lambda^2\Delta(V_\eps-V_{\eps'}) = \nD e^{-\VD}(e^{V_\eps}-e^{V_{\eps'}})
	- \pD e^{\VD}(e^{-V_\eps}-e^{-V_{\eps'}}) - (D_\eps-D_{\eps'}) + E_\eps-E_{\eps'},
$$
and choosing the test function $V_\eps-V_{\eps'}$ in the weak formulation, we have
\begin{align*}
  \lambda^2&\int_\Omega|\na(V_\eps-V_{\eps'})|^2 dx
	= -\int_\Omega\nD e^{-\VD}(e^{V_\eps}-e^{V_{\eps'}})(V_\eps-V_{\eps'})dx \\
	&\phantom{xx}{}
	- \int_\Omega \pD e^{\VD}(-e^{-V_\eps}+e^{-V_{\eps'}})(V_\eps-V_{\eps'})dx
	+ \int_\Omega(D_\eps-D_{\eps'})(V_\eps-V_{\eps'})dx \\
	&\phantom{xx}{}- \int_\Omega(E_\eps-E_{\eps'})(V_\eps-V_{\eps'})dx \\
	&\le \int_\Omega(D_\eps-D_{\eps'})(V_\eps-V_{\eps'})dx
	- \int_\Omega(E_\eps-E_{\eps'})(V_\eps-V_{\eps'})dx,
\end{align*}
because of the monotonicity of $z\mapsto e^z$ and $z\mapsto -e^{-z}$. 
The strong convergences \eqref{5.neps} and \eqref{5.peps} 
as well as the uniform $L^\infty(Q_T)$ bound of $V_\eps$ imply that
$E_\eps\to 0$ strongly in $L^1(Q_T)$ as $\eps\to 0$. Therefore, in view
of \eqref{5.Deps},
$$
  \lambda^2\|\na(V_\eps-V_{\eps'})\|_{L^2(Q_T)} 
	\le \|V_\eps-V_{\eps'}\|_{L^\infty(Q_T)}\big(\|D_\eps-D_{\eps'}\|_{L^1(Q_T)}
	+ \|E_\eps-E_{\eps'}\|_{L^1(Q_T)}\big)\to 0
$$
as $\eps$, $\eps'\to 0$. Taking into account the Poincar\'e inequality, we
obtain $V_\eps-V_{\eps'}\to 0$ strongly in $L^2(0,T;H^1(\Omega))$ as
$\eps,\eps'\to 0$. This means that $(V_\eps)$ is a Cauchy sequence in
$L^2(0,T;H^1(\Omega))$ and consequently, there exists a function 
$V_0\in L^2(0,T;H^1(\Omega))$ such that
\begin{equation}\label{5.Veps}
  V_\eps\to V_0 \quad\mbox{strongly in }L^2(0,T;H^1(\Omega)).
\end{equation}
Thus, up to a subsequence, $V_\eps\to V_0$ and $\exp(V_\eps)\to\exp(V_0)$
a.e.\ in $Q_T$. Because of the uniform bound for $(V_\eps)$ in $L^\infty(Q_T)$,
this shows that $\exp(V_\eps)\rightharpoonup\exp(V_0)$ weakly* in $L^\infty(Q_T)$.
Then we infer from \eqref{5.neps} that
\begin{align*}
  n_\eps = e^{V_\eps}(n_\eps e^{-V_\eps}) \to e^{V_0}(\nD e^{-\VD})
	= \nD e^{V_0-\VD} =: n_0 &\quad\mbox{a.e. in }Q_T, \\
	n_\eps = e^{V_\eps}(n_\eps e^{-V_\eps}) \rightharpoonup e^{V_0}(\nD e^{-\VD})
	= n_0 &\quad\mbox{weakly in }L^1(Q_T).
\end{align*}
By the Dunford--Pettis theorem, $(n_\eps)$ is uniformly integrable. Thus, the
a.e.\ convergence of $(n_\eps)$ implies that
$$
  n_\eps\to n_0\quad\mbox{strongly in }L^1(Q_T)
$$
and by similar arguments,
$$
  p_\eps\to p_0 :=\pD e^{\VD-V_0}\quad\mbox{strongly in }L^1(Q_T).
$$

Finally, we perform the limit $\eps\to 0$ in \eqref{1.n}--\eqref{1.V} and
prove that $(n_0,p_0,D_0,V_0)$ satisfies the limit problem. The only delicate
limit is in the flux term in the equation for $D_\eps$. The proof is similar
as in Section \ref{sec.limitk}. Indeed, the bound on the entropy production for
$D_\eps$ in \eqref{1.ei} shows that, up to a subsequence,
$$
  2\na\sqrt{D_\eps}+\sqrt{D_\eps}\na V_\eps \rightharpoonup \chi
	\quad\mbox{weakly in }L^2(Q_T)
$$
for some $\chi\in L^2(Q_T)$. 
We deduce from \eqref{5.Deps} and \eqref{5.Veps} that
\begin{align*}
  & \sqrt{D_\eps}\na V_\eps\to \sqrt{D_0}\na V_0 \quad\mbox{strongly in }L^1(Q_T), \\
  & \big\|\na\big(\sqrt{D_\eps}-\sqrt{D_0}\big)\big\|_{L^2(0,T;H^{-1}(\Omega))}
	\le \big\|\sqrt{D_\eps}-\sqrt{D_0}\big\|_{L^2(Q_T)} \to 0.
\end{align*}
Thus, we can identify $\chi=2\na\sqrt{D_0}+\sqrt{D_0}\na V_0$. This relation,
together with \eqref{5.Deps}, yields
$$
  \na D_\eps + D_\eps\na V_\eps 
	= \sqrt{D_\eps}\big(2\na\sqrt{D_\eps}+\sqrt{D_\eps}\na V_\eps\big)
	\rightharpoonup \sqrt{D_0}\chi = \na D_0+D_0\na V_0
$$
weakly in $L^1(Q_T)$. 
This implies, as at the end of Section \ref{sec.limitk}, that 
$\pa_t D_\eps\rightharpoonup \pa_t D_0$ weakly in $L^1(0,T;H^{s}(\Omega)')$
for $s>1+d/2$. The proof is finished.


\section{Numerical illustrations}\label{sec.num}

We present numerical simulations of the full model \eqref{1.n}--\eqref{1.nbcd}
and the reduced model \eqref{1.limD}--\eqref{1.limbcV} in one space dimension
to illustrate the behavior of the solutions and to compare the results
with those from \cite{SBW09}.

\subsection{Numerical scheme}

We assume that $\Omega=(0,L)$ for
some $L>0$ and we impose Dirichlet boundary conditions for $n$, $p$, and $V$.
For the numerical discretization, we formulate the reduced model 
in terms of the quasi-Fermi potentials
$$
  \phi_{n} = -\log n_0+V_0, \quad 
  \phi_{p} = \log p_0+V_0, \quad \phi_{D} = \log D_0+V_0.
$$
The reduced system reads as
\begin{align}
  & \pa_x J_{n,0} = \pa_x J_{p,0} = 0, \quad 
  \pa_t D_{0} + \pa_x J_{D,0} = 0, \quad
  \lambda^2\pa_{xx}V_0 = n_0-p_0-D_0+A(x), \label{6.eq1} \\
	& J_{n,0} = -e^{V_0-\phi_{n}}\pa_x\phi_{n}, \quad 
  J_{p,0} = -e^{\phi_{p}-V_0}\pa_x\phi_{p}, \quad
	J_{D,0} = -e^{\phi_{D}-V_0}\pa_x\phi_{D}, \label{6.eq2}
\end{align}
in $(0,L)$, $t>0$, with the initial and boundary conditions
\begin{align*}
  & \phi_{n}(0,t)=\phi_{p}(0,t)=U_0, \quad 
  \phi_{n}(L,t)=\phi_{p}(L,t)=U_L, \\
	& \pa_x\phi_{D}(0,t)=\pa_x\phi_{D}(L,t)=0, \\
	& V_0(0,t)=V_{\rm bi}+U_0, \quad
	V_0(L,t)=V_{\rm bi}+U_L \quad\mbox{for }t>0, \\
	& D_0(x,0)=D^I(x) \quad\mbox{for }x\in(0,L).
\end{align*}
Here, $U_0$ and $U_L$ are two (possibly time-dependent) applied potentials 
at the electrodes, and $V_{\rm bi}$ is the built-in potential, which is the
potential that corresponds to the thermal-equilibrium densities \cite{Jue09}:
$$
  V_{\rm bi} = \log\bigg(\frac12\big(D_e-A+\sqrt{(D_e-A)^2+4}\big)\bigg),
$$
where $D_e$ is the dopant concentration at the electrodes. Moreover, the
initial data for the electrons and holes are given by $n^I=\exp(V^I)$ and
$p^I=\exp(-V^I)$, where $V^I$ is the solution to the Poisson equation with
the above boundary conditions.

The scaled Debye length is given by $\lambda^2=\eps_s U_T/(qL^2n_i)$,
where the meaning and the values of the physical parameters are as follows:
\begin{itemize}
\item semiconductor permittivity of silicon: $\eps_s=8.85\cdot 10^{-13}$ As/Vcm;
\item thermal voltage at 300\,K: $U_T=0.026$ V;
\item elementary charge: $q=1.6\cdot 10^{-19}$ As;
\item device length: $L=50$ nm;
\item (reference) intrinsic density: $n_i=2\cdot 10^{19}$ cm$^{-3}$;
\item reference current density $J_0=400$ Acm$^{-2}$.
\end{itemize}
These values are similar to those in \cite{SBW09}, and they lead to
$\lambda^2=2.86\cdot 10^{-4}$. Furthermore, we choose as in \cite{SBW09}
constant scaled doping concentrations, $D^I=2.5$, $A=0.25$, and $D_e = 25$. 

Equations \eqref{6.eq1}--\eqref{6.eq2} are discretized by the finite-volume 
method. More precisely, the continuity equations are approximated by a
Scharfetter--Gummel scheme introduced in \cite{ScGu69}. For instance,
discretizing $(0,L)$ by $x_1=0<x_2<\cdots<x_N=L$, the continuity equation
for the electrons becomes $J_{n,k+1/2}-J_{n,k-1/2}=0$ on the control volume
$\omega_{k}=((x_{k-1}+x_k)/2,(x_k+x_{k+1})/2)$, where
$$
  J_{n,k+1/2} = \frac{1}{x_{k+1}-x_k}\big(B(V_k-V_{k+1})e^{V_k-\phi_{n,k}}
	- B(V_{k+1}-V_k)e^{V_{k+1}-\phi_{n,k+1}}\big),
$$
$B(s)=s/(e^s-1)$ is the Bernoulli function, $J_{k+1/2}$ approximates $J_n$ in
$\omega_{k}$, and $(\phi_{n,k},V_k)$ approximates $(\phi_n,V)(x_k)$. 
The continuity equation
for $D$ is discretized by the implicit Euler method. At each time step,
we use Newton's method to solve the discrete nonlinear system of $4N$ variables,
using the solution from the previous time step as the initial guess.

\subsection{Limit $\eps\to 0$}

The first numerical test is concerned with the behavior of the solutions
to the full system \eqref{1.n}--\eqref{1.nbcd} when $\eps\to 0$.
We consider only the equilibrium case when the applied voltage vanishes,
$U_0=U_L=0$.
Figure \ref{fig.eps1} illustrates the charge densities at times
$t=T_f/10$ and $t=T_f$, where $T_f=0.1$ corresponds to approximately
100\,ps. The time $T_f$ is
chosen in such a way that the solution at $t=T_f$ is close to the steady state. 
Note that we present the densities in the interval $[0.1,0.9]$
to avoid the boundary layers (e.g., Figure \ref{fig.eps2} left shows the boundary
layers for the oxygene vacancy density).

We see that the densities converge for $\eps\to 0$ to the densities associated
with the reduced system \eqref{1.limD}--\eqref{1.limbcV}, confirming
the results from Theorem \ref{thm.lim}. The values
for the densities do not vary much in space since we have chosen constant
doping concentrations. Figure \ref{fig.eps2} (right) shows that the convergence
is linear, i.e., $\|D_\eps-D_0\|_{L^1}\le C\eps$.
This is expected since the parameter $\eps$ appears in \eqref{1.n} and \eqref{1.p}
with first order. A rigorous proof, however, is delicate as the regularity
of solutions to the full model is rather low.

\begin{figure}[ht]
\begin{center}
\includegraphics[scale=0.45]{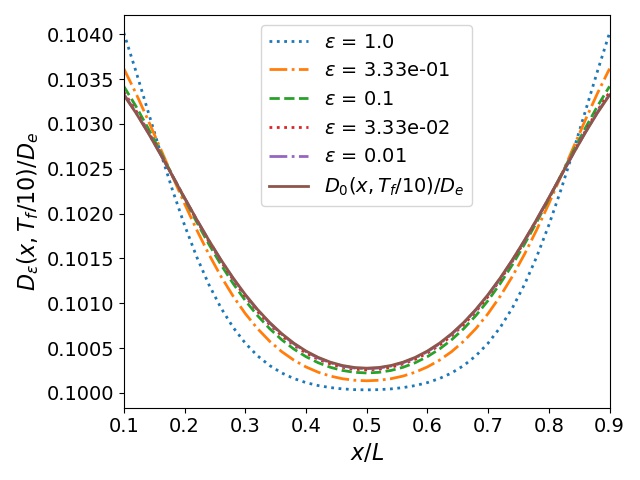}
\includegraphics[scale=0.45]{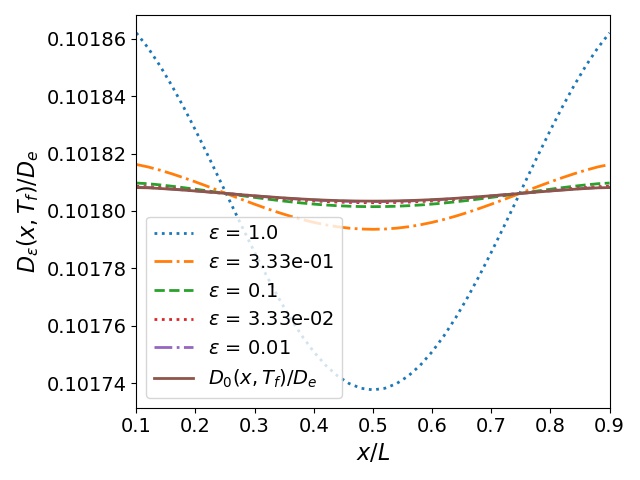}
\includegraphics[scale=0.45]{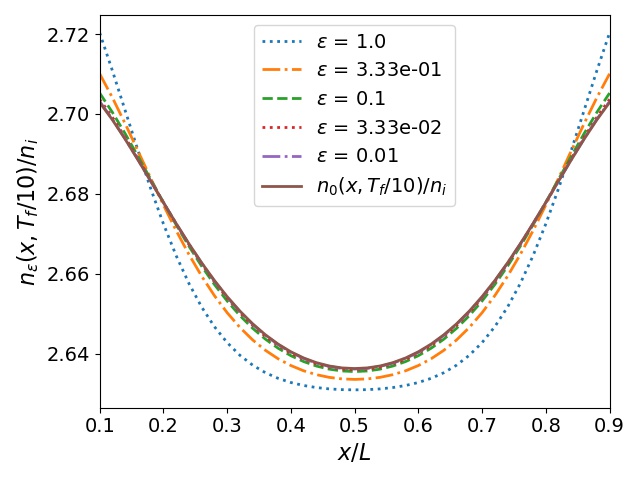}
\includegraphics[scale=0.45]{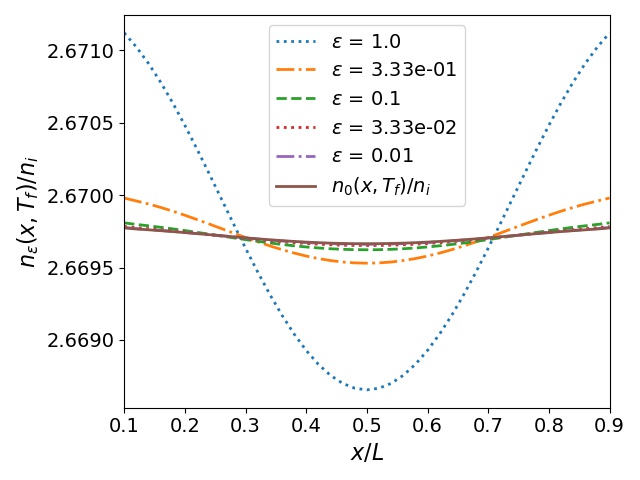}
\includegraphics[scale=0.45]{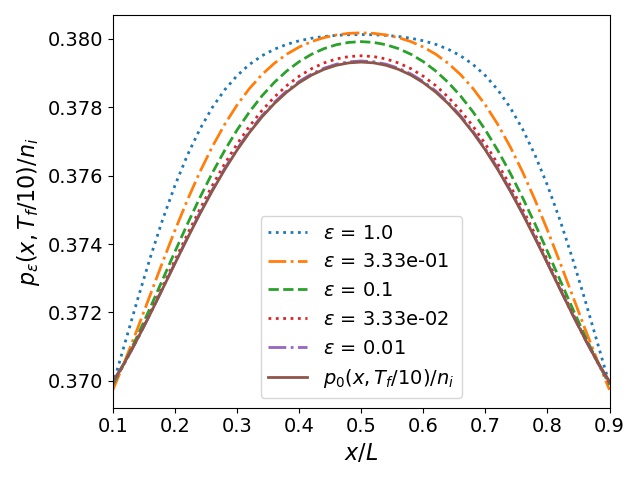}
\includegraphics[scale=0.45]{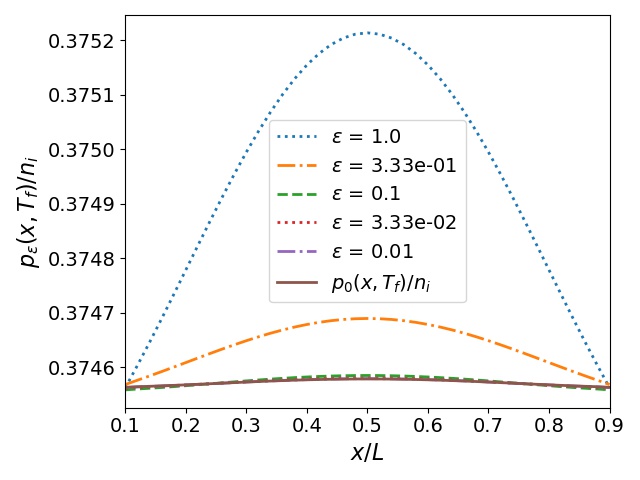}
\end{center}
\caption{Oxygen vacancy density (top row), electron density (middle row),
and hole density (bottom row) versus space at time $t=T_f/10$ (left column) 
and $t=T_f$ (right column) for various values of $\eps$ and the reduced problem.}
\label{fig.eps1}
\end{figure}

\begin{figure}[ht]
\begin{center}
\includegraphics[scale=0.45]{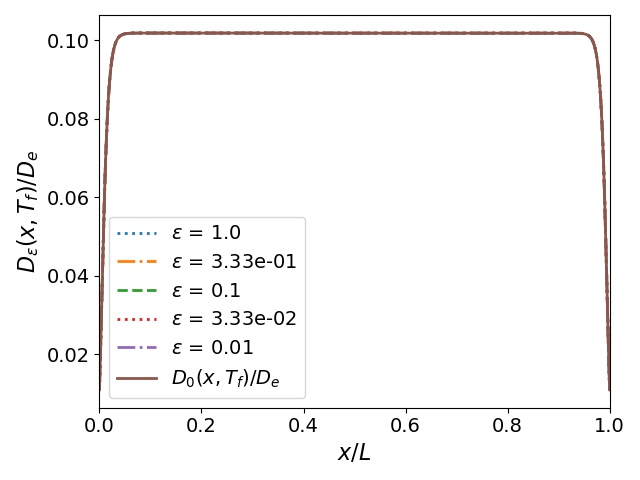}
\includegraphics[scale=0.45]{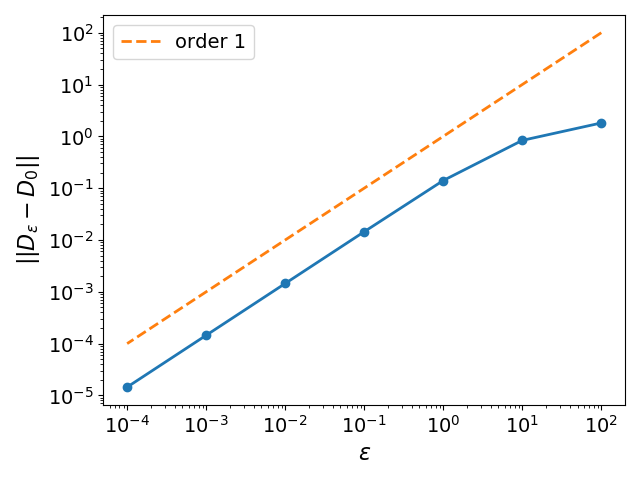}
\end{center}
\caption{Left: Oxygen vacancy density versus space at time $t=T_f$ 
for various values of $\eps$. Right: Difference of the oxygen vacancy densities 
$D_\eps$ and $D_0$ in the $L^1(\Omega\times(0,T_f))$ norm versus $\eps$.}
\label{fig.eps2}
\end{figure}

\subsection{Reduced system}

In the following, we focus on the reduced system \eqref{6.eq1}--\eqref{6.eq2}.
First, we choose a vanishing applied voltage ($U_0=U_L=0$)
and consider different values for the
dopant concentration at the electrode $D_e$. Figure \ref{fig.De} (left) shows the
spatial distribution of the quotient $D_0(x,T_f)/D_e$,
where $D_0(x,T_f)$ is close to the steady state. 
We observe a $U$-shape distribution with a
boundary layer near the electrodes. According to \cite{SBW09},
the layer comes from the fact that a large vacancy density gradient
near the electrode interfaces is required to compensate the strong
electrostatic attraction of ions to the image charge on both electrodes.
When $D_e/D^I < 1$, the vacancy density decreases away from the electrodes
and meet in the center of the device with a vanishing slope, and the shape is
inversed when $D_e/D^I>1$. 

\begin{figure}[ht]
\begin{center}
\includegraphics[scale=0.45]{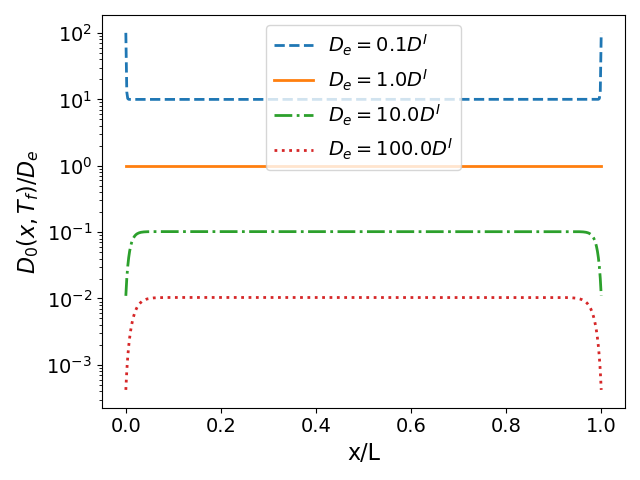}
\includegraphics[scale=0.45]{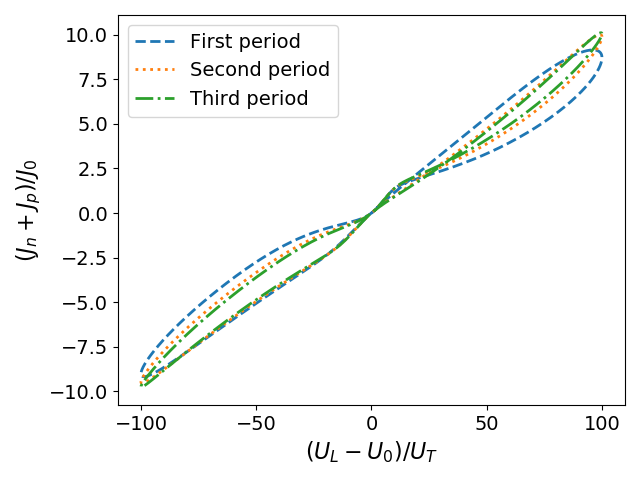}
\end{center}
\caption{Left: Rescaled vacancy density $D_0(x,T)/D_e$ for different values 
of $D_e/D^I$. Right: Current-voltage characteristics for three periods of a
sinuoidal applied voltage.}
\label{fig.De}
\end{figure}

For the following figure, we fix $D_e/D^I=10$. 
Figure \ref{fig.DV} shows the zero-bias potential 
$V(x,T_f)-V_{\rm bi}-V_{\rm applied}(x)$ 
and vacancy density at final time $t=T_f$ for various applied voltages 
$U_L-U_0$, scaled with the thermal voltage $U_T=26$\,mV. Here, we have set
$V_{\rm applied}(x)=(U_L-U_0)(x/L)-U_0$. The applied voltage produces a
potential barrier for the electrons; it causes the mobile vacancies to drift
and results in a complete vacancy depletion at the right side of 
the device. Similar results have been obtained in \cite[Figure 1]{SBW09}.

\begin{figure}[ht]
\begin{center}
\includegraphics[scale=0.45]{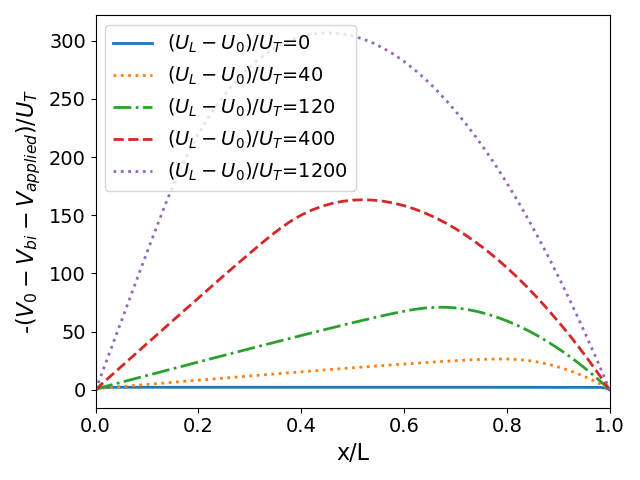}
\includegraphics[scale=0.45]{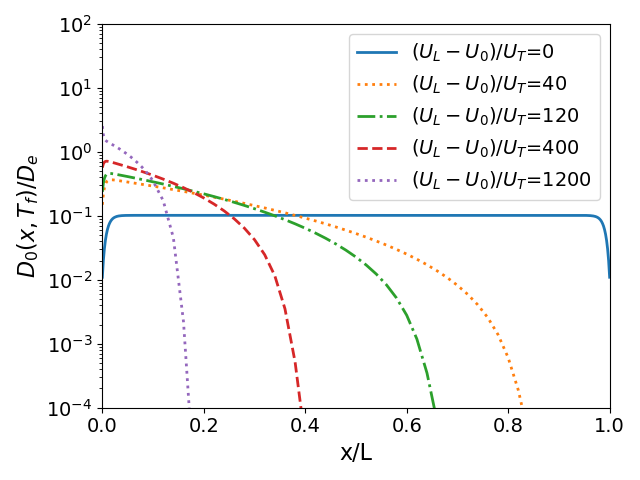}
\end{center}
\caption{Zero-bias potential $V(x,T_f)-V_{\rm bi}-V_{\rm applied}$ (left)
and oxygen vacancy density $D_0(x,T_f)$ (right) for various applied voltages.}
\label{fig.DV}
\end{figure}

Finally, we consider a sinusoidal applied voltage with $U_0=0$, 
$U_L(t)=100\sin(6\pi t/T_f)$, and $T_f=0.03$. 
The resulting dynamic current-voltage characteristics
($J_n(0)+J_p(0)$ versus $U_L-U_0$) are shown in Figure \ref{fig.De} (right). 
As in \cite{SBW09}, we observe a pinched hysteresis loop. 
This loop is a well-known
fingerprint of the ideal memristor introduced in \cite{Chu71}. 
The same applied potential leads to different current 
values at different times, which indicates that the device has a memory.
This confirms that the
drift-diffusion model is able to represent a memristive device.


\begin{appendix}
\section{Auxiliary lemmas}\label{sec.aux}

Introduce the functions $T_k(s)=\min\{k,s\}$ for $s\ge 0$, $k\geq1$ and
$$
  g_k(s) = \int_0^s\int_1^y\frac{dz}{T_k(z)}dy, \quad
	h_k(s) = \int_0^s\frac{dz}{\sqrt{T_k(z)}}, \quad s\ge 0.
$$

\begin{lemma}\label{lem.g}
It holds that $g_k(s) = s(\log s-1)$ for $0\le s<k$ and 
$g_k(s)\ge k(\log k-1) + (s-k)^2/(2k)$ for $s\ge k$.
\end{lemma}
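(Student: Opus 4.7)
The plan is to evaluate $g_k$ explicitly on the two regions where $T_k$ has the simple form $T_k(z)=z$ (for $0\le z\le k$) or $T_k(z)=k$ (for $z\ge k$), and then read off the two claimed formulas.

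For $0\le s\le k$: whenever $0\le y\le k$, the inner integral satisfies
$$\int_1^y\frac{dz}{T_k(z)}=\int_1^y\frac{dz}{z}=\log y.$$
Hence, with the usual convention $y\log y\to 0$ as $y\to 0^+$,
$$g_k(s)=\int_0^s\log y\,dy=s\log s-s=s(\log s-1),$$
which is the first identity. In particular, $g_k(k)=k(\log k-1)$.

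For $s\ge k$: I would write $g_k(s)=g_k(k)+\int_k^s\int_1^y\frac{dz}{T_k(z)}dy$ and split the inner integral for $y\ge k$ as
$$\int_1^y\frac{dz}{T_k(z)}=\int_1^k\frac{dz}{z}+\int_k^y\frac{dz}{k}=\log k+\frac{y-k}{k}.$$
Integrating this over $y\in[k,s]$ gives $(\log k)(s-k)+(s-k)^2/(2k)$, so
$$g_k(s)=k(\log k-1)+(\log k)(s-k)+\frac{(s-k)^2}{2k}.$$
Since $k\ge 1$ (so $\log k\ge 0$) and $s-k\ge 0$, the middle term is nonnegative, which yields the stated inequality.

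The argument is entirely elementary: the only thing to keep track of is the piecewise definition of $T_k$ and the sign of $\log k$, so there is no real obstacle. I would present it as two short computations, one per case.
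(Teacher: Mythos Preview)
Your proof is correct and follows essentially the same route as the paper: both split $g_k$ at $s=k$, use $T_k(z)=z$ on $[0,k]$ to get $g_k(s)=s(\log s-1)$, and for $s\ge k$ split the inner integral at $z=k$. The only difference is cosmetic --- you compute the exact value $g_k(s)=k(\log k-1)+(\log k)(s-k)+(s-k)^2/(2k)$ and then drop the nonnegative middle term, whereas the paper passes directly to the inequality.
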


\begin{proof}
We estimate for $0\le s\le k$,
$$
  g_k(s) = -\int_0^s\int_y^1\frac{dz}{T_k(z)}dy
	= -\int_0^s\int_y^1\frac{dz}{z}dy = s(\log s-1);
$$
and for $s\ge k$,
$$ 
  g_k(s) = \int_0^k\int_1^y\frac{dz}{T_k(z)}dy 	
  + \int_k^s\int_1^y\frac{dz}{T_k(z)}dy \ge k(\log k-1) + \frac{1}{2k}(s-k)^2,
$$
ending the proof.
\end{proof}

\begin{lemma}\label{lem.aux}
There exists $C>0$ such that for all $k>1$,
$$
  \sqrt{T_k(s)} \le C\big(1+\sqrt{|g_k(s)|}\big), \quad
	\sqrt{T_k(s)} \le Ch_k(s)\quad\mbox{for }s\ge 0.
$$
\end{lemma}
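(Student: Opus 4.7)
The plan is to prove both inequalities by splitting $s\in[0,\infty)$ into the regime $0\le s\le k$ (where $T_k$ acts as the identity) and $s>k$ (where $T_k(s)\equiv k$), and then reducing to elementary estimates using Lemma \ref{lem.g} for $g_k$ together with a direct antiderivative computation for $h_k$.

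For the second inequality, I would simply compute $h_k$ explicitly. For $0\le s\le k$ one has $h_k(s)=\int_0^s z^{-1/2}dz=2\sqrt{s}=2\sqrt{T_k(s)}$. For $s>k$ one splits $h_k(s)=\int_0^k z^{-1/2}dz+\int_k^s k^{-1/2}dz=2\sqrt{k}+(s-k)/\sqrt{k}\ge 2\sqrt{k}=2\sqrt{T_k(s)}$. In both cases $\sqrt{T_k(s)}\le \tfrac12 h_k(s)$, so $C=2$ works.

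For the first inequality I would again split. When $0\le s\le k$, Lemma \ref{lem.g} gives $g_k(s)=s(\log s-1)$; for $s\ge e^2$ this yields $g_k(s)\ge s=T_k(s)$, so $\sqrt{T_k(s)}\le \sqrt{|g_k(s)|}$, while for $0\le s<e^2$ we have $\sqrt{T_k(s)}=\sqrt{s}\le e$, absorbed by the additive $1$. When $s>k$, Lemma \ref{lem.g} gives $g_k(s)\ge k(\log k-1)+(s-k)^2/(2k)$, so for $k\ge e^2$ we obtain $g_k(s)\ge k=T_k(s)$ and hence $\sqrt{T_k(s)}\le\sqrt{|g_k(s)|}$, while for $1<k<e^2$ the quantity $\sqrt{T_k(s)}=\sqrt{k}\le e$ is again bounded by the additive $1$. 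Choosing $C\ge e$ uniform in $k$ and $s$ then gives the asserted inequality.

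The only mildly delicate point is that $g_k(s)=s(\log s-1)$ can be negative on $(0,e)$, which is precisely why the statement carries the absolute value together with the additive $1$: this term covers the bounded regime $s\in[0,e^2]$ (respectively $k\in(1,e^2)$), while the unbounded regime is controlled by the linear/quadratic lower bounds on $g_k$ provided by Lemma \ref{lem.g}. I do not foresee any genuine obstacle; the whole proof is a short case distinction and should take only a few lines once Lemma \ref{lem.g} is invoked.
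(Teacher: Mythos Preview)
Your proof is correct. The second inequality is handled identically to the paper. For the first inequality, the paper takes a slightly different route: instead of bounding $\sqrt{T_k(s)}$ directly by $1+\sqrt{|g_k(s)|}$, it first establishes the intermediate estimate $h_k(s)^2\le C(1+|g_k(s)|)$ and then combines it with $\sqrt{T_k(s)}\le \tfrac12 h_k(s)$. Your direct case distinction (splitting at $s=e^2$ respectively $k=e^2$) is cleaner and shorter for the lemma as stated. Note, however, that the paper's intermediate inequality $h_k(s)^2\le C(1+|g_k(s)|)$ is invoked separately in the proof of Lemma~\ref{lem.estapprox} (see \eqref{3.esthk}), so if you adopt your argument you would still need to supply that estimate elsewhere; it follows by the same kind of case split, comparing $4s$ with $1+s|\log s-1|$ for $s\le k$ and $(2\sqrt{k}+(s-k)/\sqrt{k})^2$ with $1+|g_k(s)|$ for $s>k$.
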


\begin{proof}
We first prove the inequality $h_k(s)^2 \le C(1+|g_k(s)|)$ 
and then $\sqrt{T_k(s)} \le h_k(s)/2$ for $s\ge 0$. 
Combining both inequalities shows the lemma. The second inequality follows from
\begin{align*}
  h_k(s) &= \int_0^s\frac{dy}{\sqrt{y}} = 2\sqrt{s} = 2\sqrt{T_k(s)}\quad
	\mbox{for }0<s<k, \\
	h_k(s) &= \int_0^k\frac{dy}{\sqrt{y}} + \int_k^s\frac{dy}{\sqrt{k}}
	\ge 2\sqrt{k} = 2\sqrt{T_k(s)}\quad\mbox{for }s\ge k.
\end{align*}
If $0<s<k$, we have shown in Lemma \ref{lem.g} that
$g_k(s)=s(\log s-1)$, and then $h_k(s)^2 \le C(1+|g_k(s)|)$ is equivalent to
$4s\le C(1+s|\log s-1|)$ for $0<s<k$, and this inequality is true for a suitable
$C>0$. If $s\ge k$, again by Lemma \ref{lem.g}, $h_k(s)^2 \le C(1+|g_k(s)|)$
follows from
$$
  \bigg(2\sqrt{k} + \frac{s-k}{\sqrt{k}}\bigg)^2 
	\le C\bigg(k(\log k-1) + \frac{(s-k)^2}{2k}\bigg),
$$
and this inequality is valid for a suitably chosen $C>0$ independent of $k$.
\end{proof}

Let $\xi>0$ and define $g_\xi(x) = \xi x(e^x-1)$ for $x\ge 0$ and its convex conjugate
$g_\xi^*(y) = \sup_{x>0}(xy-g_\xi(x))$ for $y\ge 0$. The following lemma
provides an upper bound for $g_\xi^*$.

\begin{lemma}\label{lem.conj}
The convex conjugate function of $g_\xi$ can be estimated as
$$
  g_\xi^*(y) \le \xi\frac{(\log(1+y/\xi))^2}{1+\log(1+y/\xi)}
	\bigg(1+\frac{y}{\xi}\bigg)\quad\mbox{for }y\ge 0.
$$
\end{lemma}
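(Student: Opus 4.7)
The plan is to compute the Legendre transform essentially explicitly and then bound the result by using monotonicity of an auxiliary one-variable function. Since $g_\xi$ is smooth and strictly convex on $[0,\infty)$ with $g_\xi'(0)=0$ and $g_\xi'(x)\to\infty$, for every $y\ge 0$ the supremum in the definition of $g_\xi^*(y)$ is attained at the unique $x^*=x^*(y)\ge 0$ solving $g_\xi'(x^*)=y$. A direct computation gives $g_\xi'(x)=\xi\bigl(e^x(1+x)-1\bigr)$, so this optimality condition reads
\begin{equation*}
  e^{x^*}(1+x^*) = 1 + \frac{y}{\xi}.
\end{equation*}

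Next, I would plug the maximizer back into $g_\xi^*(y)=x^*y-\xi x^*(e^{x^*}-1)$. Using the identity above to substitute $y=\xi(e^{x^*}(1+x^*)-1)$, the two contributions from $\xi x^*$ collapse and one is left with the clean formula
\begin{equation*}
  g_\xi^*(y) \;=\; \xi\,(x^*)^2\, e^{x^*} \;=\; \xi\,\frac{(x^*)^2}{1+x^*}\Bigl(1+\frac{y}{\xi}\Bigr),
\end{equation*}
where the second equality uses $e^{x^*}=(1+y/\xi)/(1+x^*)$ from the optimality condition.

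Comparing with the target inequality, which in the notation $L:=\log(1+y/\xi)$ reads $g_\xi^*(y)\le \xi\,\frac{L^2}{1+L}(1+y/\xi)$, it therefore suffices to prove $\phi(x^*)\le \phi(L)$, where $\phi(t):=t^2/(1+t)$. Since $\phi'(t)=t(t+2)/(1+t)^2\ge 0$ on $[0,\infty)$, the function $\phi$ is nondecreasing, and hence the job reduces to showing $x^*\le L$. This is immediate: by the optimality condition, $e^{x^*}=(1+y/\xi)/(1+x^*)\le 1+y/\xi=e^{L}$, because $x^*\ge 0$ forces $1+x^*\ge 1$; taking logarithms yields $x^*\le L$.

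I do not anticipate a real obstacle: the only mildly nonobvious step is recognizing that the Fenchel transform at the maximizer simplifies to $\xi(x^*)^2 e^{x^*}$, after which everything reduces to the monotonicity of the elementary function $\phi$. Should one wish to avoid the existence claim for the maximizer altogether, the same conclusion follows by choosing $x=x^*$ in the definition of $g_\xi^*$ as a lower bound on $\sup$, and then verifying directly by concavity of $x\mapsto xy-g_\xi(x)$ that $x^*$ is indeed optimal; either route leads to the stated estimate.
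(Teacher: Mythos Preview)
Your argument is correct and coincides with the paper's proof: both compute the maximizer $x^*$ from $e^{x^*}(1+x^*)=1+y/\xi$, simplify $g_\xi^*(y)$ to $\dfrac{(x^*)^2}{1+x^*}(y+\xi)$, and conclude via $x^*\le\log(1+y/\xi)$ together with the monotonicity of $t\mapsto t^2/(1+t)$. The only cosmetic difference is that you record the intermediate identity $g_\xi^*(y)=\xi(x^*)^2 e^{x^*}$ and make the monotonicity step explicit, whereas the paper absorbs both into a single chain of equalities.
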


\begin{proof}
For given $y\ge 0$, let $\overline{x}(y)\ge 0$ be the unique solution to
$y=g'_\xi(\overline{x}(y))=\xi(1+\overline{x}(y))e^{\overline{x}(y)}-\xi$. Then
$$
  g_\xi^*(y) = \left\{\begin{array}{ll}
	\overline{x}(y)y-g_\xi(\overline{x}(y)) &\quad\mbox{for }y>g_\xi'(0)=0, \\
	0 &\quad\mbox{for }y=g_\xi'(0)=0.
	\end{array}\right.
$$
Furthermore, it follows from the definition of $\overline{x}(y)$ 
that $y/\xi \ge e^{\overline{x}(y)} - 1$ and hence,
$\overline{x}(y)\le\log(1+y/\xi)$. Therefore, since 
$(1+\overline{x}(y))e^{\overline{x}(y)} = 1+y/\xi$ by the definition of 
$\overline{x}(y)$, we have for $y\ge 0$,
\begin{align*}
  g_\xi^*(y) &= \overline{x}(y)y - \xi\overline{x}(y)(e^{\overline{x}(y)}-1)
	= \xi\overline{x}(y)\bigg(1+\frac{y}{\xi}-e^{\overline{x}(y)}\bigg) \\
	&= \xi\overline{x}(y)\bigg(1+\frac{y}{\xi}-\frac{1+y/\xi}{1+\overline{x}(y)}\bigg)
	= \frac{\overline{x}(y)^2}{1+\overline{x}(y)}(y+\xi)
	\le \frac{(\log(1+y/\xi))^2}{1+\log(1+y/\xi)}(y+\xi),
\end{align*}
which shows the lemma.
\end{proof}

We continue with some Gagliardo--Nirenberg (type) inequalities.

\begin{lemma}[Gagliardo--Nirenberg]\label{lem.GN}
Let $\Omega\subset\R^d$ ($d\ge 1$) be a bounded domain with Lip\-schitz boundary
and let $q\le 2d/(d-2)$ if $d>2$ and $q<\infty$ if $d=2$.
Then for all $\delta>0$, there exist $C>0$ and $C(\delta)>0$
such that for all $u\in H^1(\Omega)$,
\begin{align}
  \|u\|_{L^q(\Omega)} &\le C\|u\|_{H^1(\Omega)}^\theta\|u\|_{L^1(\Omega)}^{1-\theta}, 
	\label{GN} \\
	\|u\|_{L^q(\Omega)} &\le \delta\|u\|_{H^1(\Omega)}^\theta
	\|u\log|u|\|_{L^1(\Omega)}^{1-\theta} + C(\delta)\|u\|_{L^1(\Omega)}, \label{GN2}
\end{align}
where $\theta = 2d(q-1)/((d+2)q)$. In two space dimensions, we have $\theta=1-1/q$.
\end{lemma}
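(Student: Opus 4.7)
\medskip

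The first inequality is the classical multiplicative interpolation of Gagliardo--Nirenberg--Sobolev type, which follows from combining the Sobolev embedding $H^1(\Omega)\hookrightarrow L^{2d/(d-2)}(\Omega)$ (or $L^p(\Omega)$ for all $p<\infty$ when $d=2$) with Hölder's inequality in the form $\|u\|_{L^q}\le\|u\|_{L^{2d/(d-2)}}^\theta\|u\|_{L^1}^{1-\theta}$, where $\theta$ is determined by matching the scaling exponents. I would therefore simply invoke the standard statement (as in \cite[p.~95]{Jue16}) for \eqref{GN}.

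For \eqref{GN2} the key observation is the elementary bound
\begin{equation*}
   |u|\chi_{\{|u|>M\}}\le (\log M)^{-1}|u\log|u||\chi_{\{|u|>M\}}
   \quad\text{for }M\ge e,
\end{equation*}
which converts $L^1$-control on large values of $u$ into $L\log L$-control with a prefactor that can be made arbitrarily small by taking $M$ large. Given $M\ge e$, I would split $u=T_M(u)+v$, where $T_M(u)=\mathrm{sign}(u)\min\{|u|,M\}$ is the symmetric truncation and $v=(|u|-M)_+\,\mathrm{sign}(u)$ is the tail, noting that $|v|\le |u|$ and $|\nabla v|\le|\nabla u|$ pointwise, so that $\|v\|_{H^1}\le\|u\|_{H^1}$. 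Then I would estimate the two pieces separately: since $|T_M(u)|\le M$,
\begin{equation*}
   \|T_M(u)\|_{L^q}^q\le M^{q-1}\|T_M(u)\|_{L^1}\le M^{q-1}\|u\|_{L^1},
\end{equation*}
while \eqref{GN} applied to $v$, together with $\|v\|_{L^1}\le(\log M)^{-1}\|u\log|u|\|_{L^1}$, yields
\begin{equation*}
   \|v\|_{L^q}\le C\,(\log M)^{-(1-\theta)}\,\|u\|_{H^1}^\theta\,
   \|u\log|u|\|_{L^1}^{1-\theta}.
\end{equation*}

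Choosing $M=M(\delta):=\exp\bigl((C/\delta)^{1/(1-\theta)}\bigr)$ makes the prefactor on the tail piece equal to $\delta$, and a Young inequality of the form $M^{(q-1)/q}\|u\|_{L^1}^{1/q}\le\frac{q-1}{q}M+\frac{1}{q}\|u\|_{L^1}$ transforms the truncation piece into a multiple of $\|u\|_{L^1}$ plus an additive constant $C(\delta)$. The main technical subtlety is that this leading argument produces an estimate of the form
\begin{equation*}
   \|u\|_{L^q}\le \delta\|u\|_{H^1}^\theta\|u\log|u|\|_{L^1}^{1-\theta}
   +C(\delta)\|u\|_{L^1}+C(\delta),
\end{equation*}
whereas \eqref{GN2} is stated without an additive constant. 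I would remove this additive term by the standard dichotomy: if $\|u\|_{L^1}\ge 1$ it is absorbed into $C(\delta)\|u\|_{L^1}$ (after enlarging $C(\delta)$), while if $\|u\|_{L^1}<1$ one uses the boundedness of $\Omega$ together with \eqref{GN} to bound $\|u\|_{L^q}\le C\|u\|_{H^1}^\theta\|u\|_{L^1}^{1-\theta}\le C\|u\|_{H^1}^\theta$ and then Young's inequality to redistribute this bound into $\delta\|u\|_{H^1}^\theta\|u\log|u|\|_{L^1}^{1-\theta}+C(\delta)\|u\|_{L^1}$ after noting that $\|u\log|u|\|_{L^1}\ge c|\Omega|^{-1}\|u\|_{L^1}\,|\log\|u\|_{L^1}|$ for small $\|u\|_{L^1}$ via Jensen's inequality applied to the convex function $s\mapsto s|\log s|$. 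Finally, the case $d=2$ with $\theta=1-1/q$ is recovered directly by specializing the Sobolev exponents, and no new ideas are needed.
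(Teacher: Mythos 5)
Your treatment of \eqref{GN} by citation is exactly what the paper does, and your truncation argument for \eqref{GN2} is, in substance, the same mechanism as in the references the paper invokes (\cite[(22)]{BHN94}, \cite[(1.9)]{GlHu97}): split $u$ at a level $M$, bound the tail in $L^1$ by $(\log M)^{-1}\|u\log|u|\|_{L^1}$, apply \eqref{GN} to the tail, and choose $M=M(\delta)$. Everything up to the intermediate estimate
\begin{equation*}
 \|u\|_{L^q(\Omega)}\le\delta\|u\|_{H^1(\Omega)}^{\theta}\,
 \|u\log|u|\|_{L^1(\Omega)}^{1-\theta}
 +C(\delta)\,\|u\|_{L^1(\Omega)}^{1/q}
\end{equation*}
is correct (including $\|v\|_{H^1}\le\|u\|_{H^1}$, since $\na v=\chi_{\{|u|>M\}}\na u$ a.e.).

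The gap is in the final step, where you convert $C(\delta)\|u\|_{L^1}^{1/q}$ into $C(\delta)\|u\|_{L^1}$. Young's inequality leaves an additive constant, and your removal of it for $\|u\|_{L^1}<1$ rests on the lower bound $\|u\log|u|\|_{L^1}\ge c|\Omega|^{-1}\|u\|_{L^1}\,|\log\|u\|_{L^1}|$ obtained ``by Jensen applied to $s\mapsto s|\log s|$''. That function is not convex (it equals $-s\log s$ on $(0,1)$), so Jensen does not apply, and the bound is false: for a smoothed indicator of a small set, $u\log|u|$ vanishes except on a thin transition layer, so $\|u\log|u|\|_{L^1}$ can be made arbitrarily small compared with $\|u\|_{L^1}|\log\|u\|_{L^1}|$. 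Moreover, no repair is possible, because \eqref{GN2} in its literal form fails for precisely these functions: in $d=2$, taking $u_r$ a smoothed indicator of the ball $B_r$ gives $\|u_r\|_{L^q}\sim r^{2/q}$, $\|u_r\|_{H^1}\le C_0$, $\|u_r\log u_r\|_{L^1}\le Cr^2$, $\|u_r\|_{L^1}\sim r^2$, so the right-hand side of \eqref{GN2} is $\delta\,O(r^{2/q})+C(\delta)O(r^2)$, which is smaller than the left-hand side for small $\delta$ and small $r$. The provable statements are the $q$-th power form of \cite[(22)]{BHN94}, namely $\|u\|_{L^q}^q\le\delta\|u\|_{H^1}^{q\theta}\|u\log|u|\|_{L^1}^{q(1-\theta)}+C(\delta)\|u\|_{L^1}$, or equivalently the displayed inequality above with $\|u\|_{L^1}^{1/q}$; either version is all that the proof of Lemma \ref{lem.L2} actually requires (there both $\|D_k\|_{L^1}$ and $\|D_k\log D_k\|_{L^1}$ are already bounded). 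So your argument proves everything the paper uses, but you should stop at the $\|u\|_{L^1}^{1/q}$ form rather than force the exponent in the additive term to match the printed statement.
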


\begin{proof}
Inequality \eqref{GN} is the standard Gagliardo--Nirenberg inequality.
Using this inequality, inequality \eqref{GN2} can be proved as in
\cite[(22)]{BHN94}, where the inequality was shown for $q=3$; also see
\cite[(1.9)]{GlHu97}.
\end{proof}

We recall the following regularity result, valid in two space dimensions and
proved in \cite{Gro94}; also see \cite[Lemma 3.1]{GlHu97}. 

\begin{lemma}[Regularity for the Poisson equation]\label{lem.vinfty}
Let $\Omega\subset\R^2$ satisfy Assumption (A1), and
let $v\in H^1(\Omega)$ be the unique solution to $\Delta v=f$ in $\Omega$,
$v=\overline{v}$ on $\Gamma_D$, and $\na v\cdot\nu=0$ on $\Gamma_N$.
There exist $r_0>2$ and $C>0$ such that
\begin{align}\label{v.infty}
  \|v\|_{L^\infty(\Omega)} &\le C\big(\|f\log|f|\|_{L^1(\Omega)}
	+ g(\|v\|_{H^1(\Omega)}) + 1\big), \\
	\|v\|_{W^{1,r_0}(\Omega)} &\le C\big(\|f\|_{L^{2r_0/(r_0+2)}(\Omega)}
	+ g(\|v\|_{H^1(\Omega)}) + 1\big), \nonumber 
\end{align}
where $g$ is a continuous increasing function.
\end{lemma}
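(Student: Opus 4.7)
The plan is to reduce the problem to homogeneous Dirichlet data on $\Gamma_D$ by setting $v = w + \widetilde{v}$, where $\widetilde{v}\in W^{1,\infty}(\Omega)$ is an extension of $\overline{v}$ with $\widetilde{v}=\overline{v}$ on $\Gamma_D$. Then $w\in H_D^1(\Omega)$ solves $\Delta w = f-\Delta\widetilde{v}$ in $\Omega$ with $\na w\cdot\nu = -\na\widetilde{v}\cdot\nu$ on $\Gamma_N$. The boundary correction is controlled in terms of $\|\overline{v}\|_{W^{1,\infty}}$, absorbed into the continuous increasing function $g$. After this reduction, it suffices to treat the case $\overline{v}=0$.

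For the second estimate, I would invoke Gröger's regularity theorem \cite{Gro94}, which states that under Assumption (A1) (Lipschitz boundary, $\Gamma_D$ of positive measure, $\Gamma_N$ relatively open), there exists $r_0>2$ such that the operator $-\Delta:W^{1,r_0}_D(\Omega)\to (W^{1,r_0'}_D(\Omega))'$, with $r_0'=r_0/(r_0-1)$, is an isomorphism. In two dimensions, the Sobolev embedding $W^{1,r_0'}(\Omega)\hookrightarrow L^{2r_0/(r_0-2)}(\Omega)$ holds (since $r_0'<2$ yields the exponent $2r_0'/(2-r_0')=2r_0/(r_0-2)$). Dualizing, $L^{2r_0/(r_0+2)}(\Omega)\hookrightarrow (W^{1,r_0'}_D(\Omega))'$, so the isomorphism property gives
\begin{equation*}
  \|w\|_{W^{1,r_0}(\Omega)}\le C\|f-\Delta\widetilde v\|_{(W^{1,r_0'}_D(\Omega))'}\le C\big(\|f\|_{L^{2r_0/(r_0+2)}(\Omega)}+\|\overline v\|_{W^{1,\infty}(\Omega)}\big).
\end{equation*}
Combining with the triangle inequality $\|v\|_{W^{1,r_0}}\le\|w\|_{W^{1,r_0}}+\|\widetilde v\|_{W^{1,r_0}}$ and hiding the lower-order terms in $g(\|v\|_{H^1})+1$ (using interpolation against the available $H^1$ bound only when $\|f\|_{L^{2r_0/(r_0+2)}}$ is already present) finishes this half.

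For the $L^\infty$ estimate, the key is that in dimension two the $H^1\not\hookrightarrow L^\infty$ gap is closed precisely by an Orlicz-type hypothesis on the right-hand side, via a duality argument that uses the Trudinger--Moser inequality. For a point $x_0\in\Omega$ and any $\phi\in L^1(\Omega)$, let $\psi\in H^1_D(\Omega)$ solve the dual mixed problem $\Delta\psi=\phi$. By the Trudinger embedding $H^1_D(\Omega)\hookrightarrow \exp L^2(\Omega)$ valid in 2D (again using Gröger-type regularity to handle the mixed conditions), $\exp(\alpha|\psi|)\in L^1(\Omega)$ for some $\alpha>0$ with a bound in terms of $\|\psi\|_{H^1}\le C\|\phi\|_{L^1\text{ plus lower order}}$. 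Testing the equation for $w$ against a suitable truncation/dual construction, and using Fenchel--Young with conjugate pair $(s\log|s|,e^{|t|}-|t|-1)$,
\begin{equation*}
  \int_\Omega |f\psi|dx\le \int_\Omega |f|\log(1+|f|)dx+\int_\Omega\big(e^{|\psi|}-|\psi|-1\big)dx\le C\big(\|f\log|f|\|_{L^1}+g(\|v\|_{H^1})+1\big),
\end{equation*}
yields by duality the pointwise bound $\|v\|_{L^\infty}\le C(\|f\log|f|\|_{L^1}+g(\|v\|_{H^1})+1)$.

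The main obstacle is the mixed boundary condition: standard Green's function and Trudinger--Moser arguments are written for either pure Dirichlet or pure Neumann problems, and one has to invoke the nontrivial regularity result of Gröger \cite{Gro94} to transfer the duality chain to $H^1_D(\Omega)$ with $\Gamma_D$ of only positive measure and $\Gamma_N$ merely relatively open. Once this regularity is in hand, both estimates follow by the standard interplay of Sobolev embedding and Fenchel--Young inequality, with the function $g$ absorbing the nonlinear contribution of the $H^1$ norm coming from the lower-order boundary-correction and interpolation terms.
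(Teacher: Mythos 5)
The paper does not actually prove this lemma: it is quoted verbatim from Gr\"oger \cite{Gro94} (see also \cite[Lemma 3.1]{GlHu97}), so any self-contained argument is necessarily a ``different route.'' Your treatment of the $W^{1,r_0}$ estimate is essentially the correct derivation: reduce to homogeneous Dirichlet data, invoke Gr\"oger's isomorphism $-\Delta:W^{1,r_0}_D(\Omega)\to (W^{1,r_0'}_D(\Omega))'$ for some $r_0>2$, and use the two-dimensional embedding $W^{1,r_0'}(\Omega)\hookrightarrow L^{2r_0/(r_0-2)}(\Omega)$ together with its dual $L^{2r_0/(r_0+2)}(\Omega)\hookrightarrow (W^{1,r_0'}_D(\Omega))'$; the exponent arithmetic checks out, and this is exactly how the second inequality is obtained in \cite{GlHu97}.

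The $L^\infty$ estimate, however, has a genuine gap as written. You let $\psi\in H^1_D(\Omega)$ solve the dual problem $\Delta\psi=\phi$ for $\phi$ in the unit ball of $L^1(\Omega)$ and then assert $\|\psi\|_{H^1}\le C\|\phi\|_{L^1}$ (``plus lower order'') in order to apply Trudinger--Moser. In two dimensions $L^1(\Omega)$ does \emph{not} embed into $H^1_D(\Omega)'$ (the borderline Sobolev embedding $H^1\hookrightarrow L^\infty$ fails), so the Lax--Milgram solution of the dual problem is not controlled in $H^1$ by the $L^1$ norm of the datum; one only gets $\psi\in W^{1,p}$ for $p<2$ by Stampacchia-type arguments, which is not enough to place $e^{|\psi|}$ in $L^1$ with a uniform bound. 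The Fenchel--Young pairing of $L\log L$ with $\exp L$ is the right idea, but the object that must be shown to lie in $\exp L$, uniformly, is the Green function $G(x_0,\cdot)$ of the mixed problem (whose logarithmic singularity in 2D gives $e^{\alpha G(x_0,\cdot)}\in L^1$ for small $\alpha$), not the $H^1$ solution of a dual problem with merely integrable right-hand side. Establishing such Green-function (or equivalent) bounds for a Lipschitz domain with $\operatorname{meas}(\Gamma_D)>0$ and $\Gamma_N$ relatively open is precisely the nontrivial content of \cite{Gro94}, whose own proof proceeds by a different mechanism (iteration built on the $W^{1,r_0}$ regularity) rather than by the duality chain you propose. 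As it stands, your argument for the first inequality of \eqref{v.infty} does not close.
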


The following lemma follows from the Alikakos iteration method. A proof
can be found in \cite{HoJu20} for homogeneous boundary conditions. The
proof is the same for no-flux and mixed boundary conditions.

\begin{lemma}\label{lem.infty}
Let $\Omega\subset\R^d$ ($d\ge 1$) satisfy Assumption (A1) and let
$u^{q/2}\in L^2(0,T;H^1(\Omega))\cap L^\infty(0,T;L^2(\Omega))$
for all $q\in\N$ with $q\ge 2$ with $u\ge 0$ in $\Omega\times(0,T)$,
$u(0)=0$ in $\Omega$, and either
$u=0$ on $\Gamma_D$, $\na u\cdot\nu=0$ on $\Gamma_N$, or $u=0$ on $\pa\Omega$,
or $\na u\cdot\nu=0$ on $\pa\Omega$. Assume that there are
constants $K_0$, $K_1$, $K_2>0$ and $\alpha$, $\beta\ge 0$
such that for all $q\ge 2$, $t\in(0,T)$,
\begin{equation*}
  \int_\Omega e^{t}u(t)^q dx
	+ K_0\int_0^t\int_\Omega e^{s}|\na u^{q/2}|^2 dxds
	\le K_1 q^\alpha\int_0^t\int_\Omega e^{s}u^q dxds
	+ K_2 q^\beta e^{t}.
\end{equation*}
Then
$$
  u(t) \le K_3\big(\|u\|_{L^\infty(0,T;L^1(\Omega))} + 1\big)
	\quad\mbox{in }\Omega,\ t\in(0,T),
$$
where $K_3$ depends only on $\alpha$, $\beta$, $d$, $\Omega$, $K_0$, $K_1$, and $K_2$.
\end{lemma}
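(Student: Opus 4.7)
The plan is to carry out an Alikakos-type $L^p\to L^\infty$ iteration along $q_k=2^k q_0$ with $q_0=2$, fed by a Gagliardo--Nirenberg interpolation against the $L^1$-norm. Setting $v=u^{q/2}$, the key function-space ingredient is
\begin{equation*}
\|v\|_{L^2(\Omega)}^2 \le \epsilon\|\nabla v\|_{L^2(\Omega)}^2 + C\epsilon^{-d/2}\|v\|_{L^1(\Omega)}^2 \qquad (\epsilon>0),
\end{equation*}
valid for any $v\in H^1(\Omega)$, and in particular under any of the listed boundary conditions (after absorbing the arising $\|v\|_{L^2}^2$-term). Applying it to the right-hand side of the hypothesis with $\epsilon=K_0/(2K_1 q^\alpha)$, the gradient integral that arises is absorbed on the left; dividing by $e^t$ and using $\int_0^t e^{s-t}\,ds\le 1$ produces the master estimate
\begin{equation*}
\|u(t)\|_{L^q(\Omega)}^q \le C q^{\alpha(1+d/2)}\sup_{0<s<t}\|u(s)\|_{L^{q/2}(\Omega)}^q + K_2 q^\beta,
\end{equation*}
with $C$ independent of $t$, $T$, and $q$.

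Define $M_k:=\sup_{t\in(0,T)}\|u(t)\|_{L^{q_k}(\Omega)}$. Inserting $q=q_{k+1}$ (so that $q/2=q_k$) in the master estimate and taking the supremum in $t$ yields
\begin{equation*}
M_{k+1}^{q_{k+1}} \le C q_{k+1}^{\alpha(1+d/2)} M_k^{q_{k+1}} + K_2 q_{k+1}^\beta.
\end{equation*}
Setting $\widetilde M_k:=\max(M_k,1)$ and splitting into the cases $M_k\ge 1$ and $M_k<1$ converts this into the homogeneous recursion $\widetilde M_{k+1}\le (Cq_{k+1}^\gamma)^{1/q_{k+1}}\widetilde M_k$ with $\gamma=\max(\alpha(1+d/2),\beta)$. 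Since $q_k=2^k q_0$, the series $\sum_k q_k^{-1}\log q_k$ converges, so the infinite product $\prod_k (Cq_k^\gamma)^{1/q_k}$ is finite, and passage to the limit $k\to\infty$ gives $\|u\|_{L^\infty(\Omega\times(0,T))} \le K_3'\,\widetilde M_0$.

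The base case $\widetilde M_0 \le C(\|u\|_{L^\infty(0,T;L^1(\Omega))}+1)$ follows from the $q=q_0=2$ instance of the master estimate: there $\|u(s)\|_{L^{q/2}(\Omega)} = \|u(s)\|_{L^1(\Omega)}\le \|u\|_{L^\infty(0,T;L^1)}$ is directly controlled, so taking the supremum in $t$ yields the bound. Combining with the iteration produces $u(t)\le K_3(\|u\|_{L^\infty(0,T;L^1(\Omega))}+1)$ in $\Omega\times(0,T)$, as claimed. The main obstacle is the careful use of the exponential weights $e^s$ to ensure the iteration constant $K_3$ is independent of $T$: a naive Gronwall argument applied at each iteration level would contribute a factor $\exp(K_1 q_k^\alpha T)$, and since $q_k\to\infty$ these factors would destroy the Alikakos product. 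The weight $e^s$ together with the division by $e^t$ converts the Gronwall integral into the uniform bound $\int_0^t e^{s-t}ds\le 1$, so the iteration constants depend only on $d$, $\Omega$, $K_0$, $K_1$, $K_2$, $\alpha$, and $\beta$. A secondary bookkeeping point is matching the inhomogeneity $K_2 q^\beta$ with the homogeneous recursion through the device $\widetilde M_k=\max(M_k,1)$.
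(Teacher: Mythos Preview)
Your proof is correct and follows exactly the Alikakos iteration method the paper indicates; note that the paper does not actually prove this lemma but refers to \cite{HoJu20}, so your argument is a correct fleshing-out of that cited approach. The key steps---the Gagliardo--Nirenberg interpolation $\|v\|_{L^2}^2\le\epsilon\|v\|_{H^1}^2+C\epsilon^{-d/2}\|v\|_{L^1}^2$ with $v=u^{q/2}$, the absorption via $\epsilon\sim K_0/(K_1 q^\alpha)$, the use of $\int_0^t e^{s-t}ds\le 1$ to obtain a $T$-independent recursion, and the Moser-type product $\prod_k(Cq_k^\gamma)^{1/q_k}<\infty$---are all standard and correctly executed, and your remark on the role of the weight $e^t$ in avoiding a $T$-dependent Gronwall factor is precisely the point of the lemma.
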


\end{appendix}


\end{document}